\documentclass[11pt, leqno]{amsart}
\usepackage{amssymb}
\usepackage{amsmath,amscd}
\usepackage[initials,nobysame,alphabetic]{amsrefs}
\usepackage{amsmath, amssymb}
\usepackage{amsfonts}
\usepackage{mathrsfs}
\usepackage{mathpazo}
\usepackage[arrow,matrix,curve,cmtip,ps]{xy}

\usepackage{amsthm}

\usepackage{float}
\usepackage{hyperref}
\usepackage{graphics}
\usepackage{graphicx}
\usepackage{verbatim}
\usepackage{multirow}
\usepackage{tikz}
\usepackage{enumerate}
\usepackage{eufrak}
\interfootnotelinepenalty=10000

\allowdisplaybreaks

\newtheorem{theorem}{Theorem}[section]
\newtheorem{lemma}[theorem]{Lemma}
\newtheorem{proposition}[theorem]{Proposition}
\newtheorem{corollary}[theorem]{Corollary}

\newtheorem*{theorem*}{Theorem}
\newtheorem{remark}[theorem]{Remark}

\newtheorem{definition}[theorem]{Definition}
\newtheorem{example}[theorem]{Example}


\def\as{\hbox{\rm a.s.{ }}}

\numberwithin{equation}{section}


\newcommand{\E}{\mathbb{E}}
\newcommand{\R}{\mathbb{R}}
\newcommand{\Ff}{\mathbb{F}}
\newcommand{\U}{\mathcal{U}}
\newcommand{\Ll}{\mathcal{L}}
\newcommand{\V}{\mathcal{V}}

\newcommand{\D}{\mathcal{D}}
\newcommand{\F}{\mathcal{F}}

\newcommand{\Om}{\Omega}


\newcommand{\Prob}{\mathbb{\Prob}}

\newcommand{\mytilde}{\raise.17ex\hbox{$\scriptstyle\mathtt{\sim}$}}



\def\a{\alpha}             
\def\b{\beta}                                    
\def\g{\gamma}                                   
\def\ep{\varepsilon}       
\def\s{\sigma}                    
\def\t{\theta}                    

\def\l{\lambda}

\def\e{\eta}

\def\wt{\widetilde}

\def\ms{\medskip} 
\def\no{\noindent}


\begin{document}
\title[Optimal control and zero-sum games for mean-field Markov chains]{Optimal control and zero-sum games for Markov chains of mean-field type}

\author{Salah Eddine Choutri, Boualem Djehiche and Hamidou Tembine}

\address{Department of Mathematics \\ KTH Royal Institute of Technology \\ 100 44, Stockholm \\ Sweden}
\email{boualem@kth.se}
\email{choutri@kth.se}
\address{New York University, 19 Washington Square North New York, NY 10011, USA} \email{tembine@nyu.edu}
\thanks{{\bf Acknowledgements}. We would like to thank Said Hamad{\`e}ne for his insightful remarks that helped improve the content of the paper.}

\date{This version August 25, 2017}

\subjclass[2010]{60H10, 60H07, 49N90}

\keywords{Mean-field, nonlinear Markov chain, Backward SDEs, optimal control, Zero-sum game,  Saddle point, Stochastic maximum principle, Thinning}

\begin{abstract} We establish existence of Markov chains of mean-field type with unbounded jump intensities by means of a fixed point argument using the Total Variation distance. We further show existence of nearly-optimal controls and, using a Markov chain backward SDE approach, we suggest conditions for existence of an optimal control and a saddle-point for respectively a control problem and a zero-sum differential game associated with payoff functionals of mean-field type, under dynamics driven by such Markov chains of mean-field type.
\\

\end{abstract}

\maketitle

\tableofcontents

\section{Introduction}

A Markov chain of mean-field type (also known as nonlinear Markov chain) is a pure jump process with a discrete state space whose jump intensities further depend on the marginal law of the process. It is obtained as the limit of a system of pure jump processes with mean-field interaction, when the system size tends to infinity. The marginal law of the nonlinear process, obtained as a deterministic limit of the sequence of empirical distribution functions representing the states of the finite systems, satisfies  a 'nonlinear' Fokker-Planck or masters equation called the {\it McKean-Vlasov equation}. In a sense, it represents the law of a typical trajectory in the underlying collection of interacting jump processes. In particular, optimal control and games based on the nonlinear process dynamics would give an insight into the effect of the design of control and game strategies for large system of interacting jump processes.

This class of processes is widely used for modeling purposes in chemistry, physics, biology and economics. Nicolis and Prigogine \cite{NP} were among the first authors to propose such a class of nonlinear processes as a mean-field model of a chemical reaction with spatial diffusion. It plays the same role as nonlinear diffusion processes play in the study of diffusion equations and more generally PDEs driven by nonlocal operators,  with mean-field interaction (see Sznitman \cite{Szn} and Jourdain {\it et al.} \cite{J} and the references therein). Mean-field models of the so-called first and second Schl\"ogl processes \cite{Sch} and the auto-catalytic process, which are widely used to model chemical reactions, provide interesting examples of Markov chains of mean-field type with unbounded jump intensities, and have been studied in depth in Dawson and Zheng \cite{DZ}, Feng and Zhang \cite{FZ} and Feng \cite{Fe}. These nonlinear processes are  obtained as  limits of systems of birth and death processes with mean field interaction. For application in the spread of epidemics see e.g. L{\'e}onard \cite{Leo1}, Djehiche and Kaj \cite{DK} and Djehiche and Schied \cite{DS}. For an account of existence and uniqueness of such nonlinear jump processes with bounded jump intensities we refer to Oelschl\"ager \cite{Oel}. See \cite{Leo2} for the case of unbounded jumps.

In the study of mean-field models, it is more or less decisive to make the right choice of an adequate distance (among  many others) on the set of probability measures which carries the topology of weak convergence. The total variation distance is usually the natural one to use in the study of standard Markov chains and is easy to manipulate. But, the fact that it does not necessarily guarantee finite moments (except when the state-space is finite), it may not be suitable for mean-field models when the mean-field interaction is given by e.g. the mean or the second moment, whereas the Wasserstein distance is designed to guarantee finite moments.

Nonetheless, In this paper we formulate our findings using  the  total variation distance only. We first give another proof of existence and uniqueness of Markov chains of mean-field type using a fixed point argument. The proof is based on a Girsanov-type change of measure and the Csisz{\'a}r-Kullback-Pinsker inequality. As we will see below, the full use of the total variation distance requires $L^2$-boundedness of the Girsanov density, which is insured by imposing an extra regularity condition of the intensity matrix of the Markov chain (see (A6) and (A7) below) compared with what should be natural if the Wasserstein distance is used. Furthermore, we consider optimal control and zero-sum games associated with payoff functionals of mean-field type, when the nonlinear Markov chain is controlled through its jump intensities. More precisely, we consider pure jump processes $x$ whose (eventually unbounded) jump intensities at time $t$ depend on the whole path over the time interval $[0,T]$ and also on the marginal law of $x(t)$, as long as they are predictable. In a sense, this way of constructing a nonlinear jump process is a generalization of the classical thinning procedure of a point process. 
A similar program for controlled diffusion processes is performed in \cite{DH}, with obvious overlap in the used methods and techniques.

The main results on optimal control and zero-sum games are derived using techniques involving Markov chain backward stochastic differential equations (BSDE), where existence of an optimal control and a saddle-point strategy  of the game boil down to finding a minimizer and a min-max of an underlying Hamiltonian $H$. Since the mean-field coupling through the marginal law of the controlled Markov chain makes the Hamiltonian $H$, evaluated at time $t$, depend on the whole path of the control process over the time interval $[0,t]$, we cannot follow the frequently used procedure in standard optimal control and perform a {\it deterministic} minimization of  $H$ over the set of actions $U$ and then apply a Bene{\v{s}-type progressively measurable selection theorem to produce an optimal control. We should rather take  the essential infimum of $H$ over the set $\U$ of progressively measurable controls. This nonlocal feature of the dependence of $H$ on the control does not seem covered by the existing powerful  measurable selection theorem. Therefore, our main results are  formulated {\it by assuming} existence of an essential minimum $u^*\in \U$ of $H$ and use suitable comparison results of Markov chain BSDEs to show that $u^*$ is in fact an optimal control, simply because don't know of any suitable measurable selection theorem that would guarantee  existence of an essential minimizer of $H$. One should solve this problem on case-by-case basis. Nevertheless, we give an example where we show that an optimal control exists provided  the set of Girsanov densities, indexed by admissible controls, is weakly sequentially compact. This property is satisfied if e.g. the set of intensities is mean-field free and satisfies the so-called Roxin's condition.  On the other hand, existence of a nearly-optimal control is guaranteed if we endow $\U$ with the Ekeland's distance which makes is a complete metric space and require $L^2$-boundedness of the Girsanov density.

After a section of preliminaries, we introduce in Section 3 the class of Markov chains of mean-field type and prove their existence and uniqueness under rather weak conditions on the underlying unbounded jump intensities.  In section 4, we consider the optimal control problem and provide conditions for existence of an optimal control. We further prove existence of nearly-optimal controls. Finally, in Section 5, we consider a related zero-sum game and derive conditions for existence of a saddle-point under the so-called Isaacs' condition.

\section{Preliminaries}
Let $I=\{0,1,2,\ldots\}$ equipped with its discrete topology and $\s$-field and let $\Om:=\D([0,T],I)$ be the space of functions from $[0,T]$ to $I$ that are right continuous with left limits at each $t\in [0,T)$ and are left continuous at time $T$. We endow $\Om$ with the Skorohod metric $d_0$ so that $(\Om,d_0)$ is a complete separable metric (i.e. Polish) space.  Given $t\in [0,T]$ and $\omega\in\Om$, put  $x(t,\omega)\equiv\omega(t)$ and denote by $\F^0_t:=\sigma(x(s),\,\, s\le t),\, 0\le t\le T,$ the filtration generated by $x$.  Denote by $\F$ the Borel $\sigma$-field over $\Om$. It is well known that $\F$ coincides with $\sigma( x(s),\,\, 0\le s\le T)$. Set, for $t\in [0,T]$, $|x|_t:=\underset{0\le s\le t}\sup|x(s)|$ and 
$\|a\|^2:=\underset{i,j: \, j\neq i}\sum |a_{ij}|^2$ for $a=(a_{ij},\,\, i,j\in I, \, j\neq i)$. 

To $x$ we associate the indicator process $I_i(t)=\mathbf{1}_{\{x(t)=i\}}$ whose value is $1$ if the chain is in state $i$ at time $t$ and $0$ otherwise, and the counting processes $N_{ij}(t),\,\,i\ne j$, independent of $x(0)$, such that 
$$
N_{ij}(t)=\#\{\tau\in(0,t]:x(\tau^-)=i, x(\tau)=j\},\quad N_{ij}(0)=0,
$$
which count the number of jumps from state $i$ into state $j$ during the time interval $(0,t]$. Obviously, since $x$ is right continuous with left limits, both $I_i$ and $N_{ij}$ are right continuous  with left limits. Moreover, by the relationship
\begin{equation}\label{x-rep-1}
x(t)=\sum_i iI_i(t),\quad I_i(t)=I_i(0)+\underset{j:\, j\neq i}\sum\left(N_{ji}(t)-N_{ij}(t)\right),
\end{equation}
the state process, the indicator processes, and the counting processes carry the same information which is represented by the natural filtration  $\Ff^0:=(\F^0_t,\, 0\le t\le T)$ of $x$. Note that \eqref{x-rep-1} is equivalent to the following useful representation
\begin{equation}\label{x-rep-2}
x(t)=x(0)+\sum_{i,j: \,i\neq j} (j-i) N_{ij}(t).
\end{equation}

 Below,  $C$ denotes a generic positive constants which may change from line to line.

\subsection{Markov chains} Let $G(t)=(g_{ij}(t),\,i,j\in I),\,0\le t\le T,$ be a $Q$-matrix, so that 
\begin{equation}\label{G}
g_{ij}(t)\ge 0, \,\,\, i\neq j,\quad  g_{ii}(t)=-\underset{j:\, j\neq i}\sum g_{ij}(t),\quad \underset{i,j: \, j\neq i}\sum \int_0^Tg_{ij}(t)\, dt<+\infty.
\end{equation}
In view of e.g. Theorem 4.7.3 in \cite{EK}, or Theorem 20.6 in \cite{RW} (for the finite state-space and time independent case), given the $Q$-matrix $G$ and a probability measure $\xi$ over $I$,  there exists a unique probability measure $P$ on $(\Om,\F)$ under which the coordinate process $x$ is a time-inhomogeneous Markov chain with intensity matrix $G$ and starting distribution $\xi$  i.e. such that $P\circ x^{-1}(0)=\xi$. Equivalently, $P$ solves the martingale problem for $G$ with initial probability distribution $\xi$ meaning that,   
for every $f$ on $I$, the process defined by
\begin{equation}\label{f-mart-1}
M_t^f:=f(x(t))-f(x(0))-\int_{(0,t]}(G(s)f)(x(s))\,ds
\end{equation}
is a local martingale relative to $(\Om,\F,\Ff^0)$, where 
$$
G(s)f(i):=\sum_j g_{ij}(s)f(j)=\sum_{j: \,j\neq i}g_{ij}(s)(f(j)-f(i)),\,\,\, i\in I,
$$
and
\begin{equation}\label{G-f}
G(s)f(x(s))=\sum_{i,j: \,j\neq i}I_i(s)g_{ij}(s)(f(j)-f(i)).
\end{equation}
By Lemma 21.13 in \cite{RW}, the compensated processes associated with the counting processes $N_{ij}$, defined by
 \begin{equation}\label{mart-1}
M_{ij}(t)=N_{ij}(t)-\int_{(0,t]} I_i(s^-)g_{ij}(s^-)\, ds,\quad M_{ij}(0)=0,
\end{equation}
are zero mean, square integrable and mutually orthogonal $P$-martingales  whose  predictable quadratic variations are
\begin{equation}\label{mart-2}
\langle M_{ij}\rangle_t=\int_{(0,t]} I_i(s^-)g_{ij}(s^-)\, ds.
\end{equation} 
Moreover, at jump times $t$, we have
\begin{equation}\label{mart-3}
\Delta M_{ij}(t)=\Delta N_{ij}(t)=I_i(t^-)I_j(t).
\end{equation}
Thus, the optional variation of $M$ 
$$
[M](t)=\sum_{0<s\le t}|\Delta M(s)|^2=\underset{0<s\le t}\sum\,\underset{i,j:\, j\neq i}\sum|\Delta M_{ij}(s)|^2
$$
is
\begin{equation}\label{optional-M}
[M](t)=\underset{0<s\le t}\sum\,\underset{i,j:\, j\neq i}\sum I_i(s^-)I_j(s).
\end{equation}
We call  $M:=\{M_{ij},\,\, i\neq j\}$ the accompanying martingale of the counting process $N:=\{N_{ij},\,\, i\neq j\}$ or of the Markov chain $x$.

We denote by $\Ff:=(\F_t)_{0\le t\le T}$ the completion of $\Ff^0=(\F^0_t)_{t\le T}$ with the $P$-null sets of $\Omega$. Hereafter, a process from $[0,T]\times\Om$ into a measurable space is said predictable (resp. progressively measurable) if it is predictable (resp. progressively measurable) w.r.t. the predictable $\sigma$-field on $[0,T]\times \Om$ (resp. $\Ff$).

For a real-valued matrix $m:=(m_{ij},\, i,j \in I)$ indexed by $I\times I$, we let 
\begin{equation}\label{g-t}
\|m\|_g^2(t):=\underset{i,j:\, i\neq j}\sum |m_{ij}|^2g_{ij}\mathbf{1}_{\{w(t^-)=i\}}<\infty.
\end{equation}
If $m$ is time-dependent, we simply write $\|m(t)\|_g^2$.

Let $(Z_{ij}, \, i\neq j)$ be a family of predictable processes and set 
\begin{equation}\label{norm}
\|Z(t)\|^2_{g}:=\sum_{i,j:\, i\neq j}Z^2_{ij}(t)I_{i}(t^-)g_{ij}(t^-),\quad 0< t\le T,
\end{equation}
\begin{equation}\label{quadratic}
\sum_{0<s\le t}Z(s)\Delta M(s):=\sum_{0<s\le t}\, \underset{i,j: \,i\neq j}\sum\,Z_{ij}(s)\Delta M_{ij}(s). 
\end{equation}
Consider the local martingale
\begin{equation}\label{stoch}
W(t)=\int_0^tZ(s)dM(s):=\sum_{i,j: \, i\neq j}\int_0^t Z_{ij}(s)dM_{ij}(s). 
\end{equation}
Then, the optional variation of the local martingale $W$ is
\begin{equation}\label{optional}
[W](t)=\sum_{0<s\le t}|Z(s)\Delta M(s)|^2=\sum_{0<s\le t}\,\sum_{i,j: \,i\neq j}|Z_{ij}(s)\Delta M_{ij}(s)|^2
\end{equation}
and its compensator is
\begin{equation}\label{compensator}
\langle W\rangle_t=\int_{(0,t]}\|Z(s)\|^2_{g}ds.
\end{equation}
Provided that
\begin{equation}\label{Z-g-int}
E\left[\int_{(0,T]} \|Z(s)\|^2_{g}ds\right]<\infty,
\end{equation}
$W$ is a square-integrable martingale and its optional variation satisfies 
\begin{equation}\label{isometry}
E\left[[W](t)\right]=E\left[\sum_{0<s\le t}|Z(s)\Delta M(s)|^2\right]=E\left[\int_{(0,t]} \|Z(s)\|^2_{g}ds\right].
\end{equation} 
Moreover, the following Doob's inequality holds:
\begin{equation}\label{Doob}
E\left[\sup_{0\le t\le T}\left|\int_0^tZ(s)dM(s)\right|^2\right]\le 4 E\left[\int_{(0,T]} \|Z(s)\|^2_{g}ds\right].
\end{equation}
If $\wt Z$ is another predictable process that satisfies \eqref{Z-g-int}, setting
\begin{equation}
\langle Z(t),\wt Z(t)\rangle_{g}:=\underset{i,j: \,i\neq j}\sum\,Z_{ij}(t)I_{i}(t^-)g_{ij}(t^-),\quad 0\le t\le T,
\end{equation}
and considering the martingale
$$
\wt W(t)=\int_0^t\wt Z(s)dM(s):=\sum_{i,j: \, i\neq j}\int_0^t\wt Z_{ij}(s)dM_{ij}(s),
$$
it is easy to see that
\begin{equation}\label{covariation}
E\left[ [W,\wt W](t)\right]=E\left[\int_{(0,t]} \langle Z(s),\wt Z(s)\rangle_{g}ds\right].
\end{equation} 

Since, the filtration $\mathbb{F}$ generated by the chain $x$ is the
same as the filtration generated by the family of counting processes $\{N_{ij},\, i\neq j\}$, we state the following martingale
representation theorem (see e.g. \cite{bremaud}, Theorem T11 or \cite{RW}, IV-21, Theorem 21.15).

\begin{proposition}[Martingale representation theorem]\label{mart-rep}  If $\Ll$ is a (right-continuous) square-integrable $\mathbb{F}$-martingale, there exists a  unique ($dP\times g_{ij}(s^-)I_{i}(s^-)ds$-almost everywhere) family of predictable processes $Z_{ij}, \, i\neq j,$ satisfying
\begin{equation}\label{Z}
E\left[\int_{(0,T]} \|Z(s)\|^2_{g}ds\right]<+\infty,
\end{equation}
where
\begin{equation}
\|Z(t)\|^2_{g}:=\sum_{i,j:\, i\neq j}Z^2_{ij}(t)I_{i}(t^-)g_{ij}(t^-),\quad 0\le t\le T,
\end{equation}
such that 
\begin{equation}\label{mart-rep-L}
\Ll_t=\Ll_0+\int_0^tZ(s)dM(s), \quad 0\le t\le T,
\end{equation}
where 
$$
\int_0^tZ(s)dM(s):=\sum_{i,j:\, i\neq j}\int_0^t Z_{ij}(s)dM_{ij}(s).
$$
In particular, at jump times $t$, we have
$$
\Delta \Ll_t=\sum_{i,j: \,i\neq j} Z_{ij}(t)\Delta M_{ij}(t)=\sum_{i,j: \,i\neq j} Z_{ij}(t)I_{i}(t^-)I_{j}(t).
$$ 
\end{proposition}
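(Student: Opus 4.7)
The plan is to use a Kunita--Watanabe-type orthogonal decomposition in the Hilbert space $\M^2_0$ of right-continuous square-integrable $\Ff$-martingales starting at $0$, and then argue that the orthogonal complement of the stochastic-integral subspace is trivial, exploiting the fact that $\Ff$ is the natural filtration of the pure-jump process $x$ (equivalently, of the family $\{N_{ij}\}$).

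First, the space $\M^2_0$ equipped with the inner product $(U,V):=\E[U_T V_T]$ is Hilbert. By the isometry \eqref{isometry}, the subset
\[
\hH:=\left\{\int_0^{\cdot} Z(s)\,dM(s)\,:\, Z=(Z_{ij})\text{ predictable with }\E\int_{(0,T]}\|Z(s)\|_g^2\,ds<\infty\right\}
\]
is isometric to the corresponding weighted $L^2$-space and hence is a closed subspace of $\M^2_0$. For $\Ll\in\M^2$, set $\Ll_0:=\E[\Ll_T\mid\F_0]$ and write the orthogonal decomposition $\Ll-\Ll_0=Y+N$ with $Y=\int_0^{\cdot}Z(s)\,dM(s)\in\hH$ and $N\in\hH^\perp$. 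Uniqueness of $Z$ in the stated $dP\otimes g_{ij}(s^-)I_i(s^-)\,ds$-a.e.\ sense is then immediate from the same isometry applied to any two candidate representatives.

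Next, I would show $N\equiv 0$. Orthogonality of $N$ to every element of $\hH$ translates, via \eqref{covariation}, into $\langle N,M_{ij}\rangle\equiv 0$ for all $i\neq j$. The key structural input is that every $\Ff$-martingale is purely discontinuous, with jumps supported on the jump times of $x$; this is where the natural filtration of a pure-jump process enters. Granted this, the continuous martingale part of $N$ vanishes, and at each jump time $t$ of $x$ with $x(t^-)=i$, $x(t)=j$, the increment $\Delta N_t$ is $\F_t$-measurable and must take the form $h_{ij}(t)I_i(t^-)I_j(t)$ for some predictable process $h_{ij}$. Assembling $N$ as the compensated sum of its jumps yields $N_t=\int_0^t h(s)\,dM(s)\in\hH$, so that $N\perp N$ and therefore $N\equiv 0$.

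The main obstacle is the structural step inside the second part: rigorously establishing that any $\Ff$-martingale is purely discontinuous with jumps confined to the jump set of $x$, and representing those jumps in the form $h_{ij}(t)I_i(t^-)I_j(t)$ with predictable coefficients. The cleanest route is a monotone-class argument on the $\pi$-system generating $\F$ (products of events $\{x(t_k)\in A_k\}$), combined with the absolute continuity of the compensators of the $N_{ij}$ in \eqref{mart-1}, which precludes common jumps of $N$ with any predictable process; once these facts are in place, the remainder of the argument reduces to a routine Hilbert-space projection together with the isometries \eqref{isometry}--\eqref{covariation}.
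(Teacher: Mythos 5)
First, a point of reference: the paper does not prove this proposition at all --- it is quoted from the literature (Br\'emaud, Theorem T11; Rogers--Williams, IV.21.15), so there is no in-paper proof to compare against. Your Hilbert-space architecture is exactly the standard one behind those citations: $\mathcal{M}^2_0$ with $(U,V)=E[U_TV_T]$ is Hilbert, the isometry \eqref{isometry} makes the stable subspace $\hH$ of stochastic integrals closed, and the theorem reduces to showing $\hH^\perp=\{0\}$. That skeleton, and the uniqueness argument via the isometry, are correct as written.

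The genuine gap is the step you yourself flag as ``granted this'': the claim that an arbitrary square-integrable $\mathbb{F}$-martingale $N$ is purely discontinuous with $\Delta N_t=\sum_{i,j:\,i\neq j}h_{ij}(t)I_i(t^-)I_j(t)$ for \emph{predictable} $h_{ij}$. This is not a technical obstacle to be discharged after the projection --- it \emph{is} the content of the theorem (indeed, once you have it you conclude $N\in\hH$ directly and the orthogonal decomposition becomes redundant). The tools you propose do not deliver it as described: absolute continuity of the compensators in \eqref{mart-1} only rules out jumps of $N$ at predictable times (quasi-left-continuity of the filtration); it says nothing about why $\Delta N_t$, which a priori is only $\F_t$-measurable, factors through the pair $(x(t^-),x(t))$ with a predictable coefficient, nor why $N$ has no continuous martingale part. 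The standard ways to close this are either (i) the explicit structure theory of the point-process filtration --- every $\F_{T_n}$-measurable variable is a measurable function of the first $n$ jump times and marks, from which one computes $E[\zeta\mid\F_t]$ between jumps and reads off the predictable integrand --- or (ii) starting from the martingales $M^f$ of \eqref{f-mart-1}--\eqref{f-mart-2}, whose integrands \eqref{deter-z-f} are explicit, showing via the Markov property and a monotone-class/density argument that their terminal values (together with products over cylinder sets $\{x(t_k)\in A_k\}$) span a dense subspace of $L^2(\F_T)$, and then invoking closedness of $\hH$. Your mention of a monotone-class argument on cylinder events points toward route (ii), but as stated it is aimed at the wrong target (the pathwise jump structure of a general $N$ rather than density of representable terminal values), so the proof is not complete without one of these two developments.
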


Next, we give an important application of Theorem \ref{mart-rep} to the local martingale $M^f$ given by \eqref{f-mart-1} where an explicit form of the process $Z$ can be displayed in terms of the function $f$. At jump times $t$, we have
$$
 \Delta M_t^f=M_t^f-M_{t^-}^f=\sum_i I_i(t^-)\sum_{j: \, j\neq i}I_j(t)(f(j)-f(i))=\sum_{i,j: \, j\neq i}(f(j)-f(i))\Delta M_{ij}(t),
$$ 
since, by \eqref{mart-3}, at a jump time $t$, $I_i(t^-)I_j(t)=\Delta M_{ij}(t)$ and $I_i(t^-)I_i(t)=I_{\{x(t^-)=i,x(t)=i\}}=0$. We may now define $Z^f(t)=(Z^f_{ij}(t))_{ij}$ by
\begin{equation}\label{deter-z-f}
Z^f_{ij}(t):=f(j)-f(i),\quad i,j\in I,
\end{equation} 
to obtain 
\begin{equation}\label{f-mart-2}
M^f_t=\int_{(0,t]}Z^f(s)dM(s)=\underset{i,j: \, j\neq i}\sum \,(f(j)-f(i))M_{ij}(t),\quad 0\le t\le T.
\end{equation}
Provided that $\underset{i,j: \, j\neq i}\sum(f(j)-f(i))^2 \int_0^Tg_{ij}(t)\, dt <+\infty$, $M^f$ is a square-integrable martingale.

For later use we need the following exponential estimate.
\begin{lemma}\label{exp-est}
Assume further that there exists $\a >0$ such that the $Q$-matrix $G=(g_{ij})$ satisfies 
\begin{equation}\label{exp-1}
\sum_{i,j: \,i\neq j} \int_0^Te^{\a |j-i|} g_{ij}(s)ds <+\infty.
\end{equation} 
If there exists a constant $\beta\ge \a$ such that 
\begin{equation}\label{exp-1-1}
E[e^{\beta x(0)}]<+\infty,
\end{equation}
then 
\begin{equation}\label{exp-2}
E[e^{\frac{\a}{2}|x|_T}]\le \kappa_0
\end{equation}
where
$$
\kappa_0:=E[e^{\a x(0)}]^{1/2}\exp{\frac{1}{2}\sum_{i,j: \,i\neq j}\int_0^T \left(e^{\a|j-i|}-1\right)g_{ij}(s)ds}.
$$
In particular, for any $q\ge 1$, there exists a positive constant $C_q$ which depends only on $q, \a$ and $\kappa_0$ such that 
\begin{equation}\label{q-norm}
E[|x|_T^q]\le C_q.
\end{equation}

\end{lemma}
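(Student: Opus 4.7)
The plan is to control exponential moments via Dynkin's formula applied to $f(i)=e^{\alpha i}$, together with a deterministic exponential compensator, producing a non-negative supermartingale to which a maximal inequality applies. With this choice of $f$ one has $(Gf)(i)=e^{\alpha i}H(s,i)$, where $H(s,i):=\sum_{j\neq i} g_{ij}(s)(e^{\alpha(j-i)}-1)$, so \eqref{f-mart-1} reads
$$e^{\alpha x(t)} = e^{\alpha x(0)} + N_t + \int_0^t e^{\alpha x(s^-)}\, H(s, x(s^-))\, ds,$$
for a local martingale $N$. The elementary inequality $e^{\alpha(j-i)}-1 \leq e^{\alpha|j-i|}-1$ (valid since $\alpha>0$) combined with enlarging the sum over $\{j:j\neq i\}$ to the double sum yields the state-independent, deterministic dominant $H(s,i) \leq \phi(s):=\sum_{i,j:\, i\neq j}g_{ij}(s)(e^{\alpha|j-i|}-1)$, which lies in $L^1([0,T])$ by \eqref{exp-1}.

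Setting $A_t:=\exp\bigl(-\int_0^t \phi(s)\,ds\bigr)$, which is continuous and of finite variation, the integration-by-parts formula (the covariation with $A$ vanishes) yields
$$d\bigl(e^{\alpha x(t)} A_t\bigr) = A_t\, dN_t + e^{\alpha x(t^-)} A_t\,\bigl(H(t, x(t^-)) - \phi(t)\bigr)\, dt.$$
Since $H-\phi\leq 0$, the process $\tilde Y_t := e^{\alpha x(t)} A_t$ is a non-negative local supermartingale; Fatou's lemma along a localizing sequence (using $E[\tilde Y_0]=E[e^{\alpha x(0)}]<\infty$ by \eqref{exp-1-1} and $\beta\geq\alpha$) upgrades it to a true supermartingale on $[0,T]$. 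In particular this delivers the pointwise bound $E[e^{\alpha x(t)}]\leq \kappa_0^2$ for each $t\in[0,T]$.

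For the supremum bound \eqref{exp-2}, note $|x|_T = \sup_{t\leq T} x(t)$ since $x$ is $I$-valued and non-negative, so $\sup_t e^{\alpha x(t)} \leq e^{\int_0^T \phi(s)\,ds}\sup_t \tilde Y_t$. Doob's weak maximal inequality for the non-negative supermartingale $\tilde Y$ gives $\lambda P(\sup_t \tilde Y_t\geq \lambda)\leq E[\tilde Y_0]$, and a layer-cake computation then produces $E\bigl[(\sup_t \tilde Y_t)^{1/2}\bigr]\leq 2\sqrt{E[e^{\alpha x(0)}]}$, whence $E[e^{(\alpha/2)|x|_T}]\leq 2\kappa_0$. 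The universal factor of $2$ either needs to be absorbed into $\kappa_0$ or eliminated by running the same argument on the genuine positive local martingale $\Lambda_t:=e^{\alpha(x(t)-x(0))-\int_0^t H(s,x(s^-))\,ds}$ (the Dol\'eans--Dade exponential associated with the jumps of $e^{\alpha x}$) and applying Doob's $L^2$ inequality in place of the weak $L^1$ one.

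The second assertion \eqref{q-norm} is then immediate: from $u^q/q!\leq e^u$ for $u\geq 0$, applied with $u=(\alpha/2)|x|_T$, we obtain $|x|_T^q\leq q!(2/\alpha)^q e^{(\alpha/2)|x|_T}$ pointwise, and taking expectations gives $E[|x|_T^q]\leq q!(2/\alpha)^q\,E[e^{(\alpha/2)|x|_T}]\leq C_q$ with $C_q$ depending only on $q$, $\alpha$, and $\kappa_0$. The main technical obstacle is the rigorous local-to-true-martingale passage for $N$ (and hence $\tilde Y$), which proceeds via the exhausting stopping times $\tau_n:=\inf\{t:x(t)\geq n\}$ together with the uniform bound $E[e^{\alpha x(t\wedge \tau_n)}]\leq \kappa_0^2$ that is itself secured by the local supermartingale property once one argues inductively on $n$; capturing the sharp constant $\kappa_0$ (rather than $2\kappa_0$) would additionally require verifying that $\Lambda$ is a true martingale, which is the second source of difficulty and seems to demand a de la Vall\'ee Poussin or Nov\'ikov-type estimate on $\int_0^T e^{2\alpha|j-i|}g_{ij}(s)\,ds$.
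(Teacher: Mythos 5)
Your route is genuinely different from the paper's, and the two are worth contrasting. You build the compensated supermartingale $\tilde Y_t=e^{\alpha x(t)}\exp(-\int_0^t\phi(s)\,ds)$ from the generator identity and then extract the running supremum via Doob's weak maximal inequality plus a layer-cake optimization. The paper instead invokes the random time-change representation (Ethier--Kurtz, Thm.~6.4.1): $N_{ij}(t)=N^0_{ij}(\int_0^t g_{ij}(s)\,ds)$ with $N^0_{ij}$ independent unit-rate Poisson processes independent of $x(0)$, so that pathwise $|x|_T\le x(0)+\sum_{i\neq j}|j-i|\,N^0_{ij}(\int_0^T g_{ij}(s)\,ds)$ \emph{because each $N^0_{ij}$ is nondecreasing}. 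The supremum over $t$ is thus dominated deterministically by a terminal value, no maximal inequality is needed, and Cauchy--Schwarz together with the explicit Poisson moment generating function gives exactly $\kappa_0$. What your approach buys is self-containedness (no appeal to the time-change theorem, and localization handled by hand); what it costs is the constant.

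That cost is the one real defect: you prove $E[e^{(\alpha/2)|x|_T}]\le 2\kappa_0$, not the stated $\le\kappa_0$, and your proposed repair does not close the gap. Running the argument on the positive local martingale $\Lambda_t=e^{\alpha(x(t)-x(0))-\int_0^t H(s,x(s^-))\,ds}$ and using Doob's $L^2$ (or the $L^p$ maximal inequality for the supermartingale $\Lambda^{1/2}$ with $p=2$) still returns the factor $2=p/(p-1)$; indeed for a positive continuous martingale started at $1$ one has $\sup_t\Lambda_t\overset{d}{=}1/U$ with $U$ uniform, so $E[(\sup_t\Lambda_t)^{1/2}]=2$ and the constant $2$ is sharp for any maximal-inequality route. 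So the factor cannot be ``eliminated'' this way; it must either be accepted (which is harmless for every downstream use in the paper, where only finiteness of $\kappa_0$ enters, e.g.\ in the $L^2$-bound on $L^Q_T$ in Theorem~3.5) or avoided altogether by the paper's monotone time-change argument. Separately, your localization sketch via $\tau_n=\inf\{t:x(t)\ge n\}$ needs one more line: $x(\tau_n)$ can overshoot $n$ at the stopping time, so $e^{\alpha x(t\wedge\tau_n)}$ is not bounded by $e^{\alpha n}$; one should instead work with truncated test functions $f_n(i)=e^{\alpha(i\wedge n)}$ (for which $M^{f_n}$ is a genuine martingale and the same drift bound by $\phi$ holds) and pass to the limit by monotone convergence. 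With the constant relaxed to $2\kappa_0$ and that localization fixed, your argument is sound, and the deduction of \eqref{q-norm} from \eqref{exp-2} is immediate as you say.
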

\begin{proof}

We note that by \eqref{G}, using the same argument as \cite{EK}, Theorem 6.4.1, the sequence of point processes $N_{ij}, \, j\neq i$ can be represented (in distribution) as
$$
N_{ij}(t)=N^0_{ij}\left(\int_0^t g_{ij}(s)ds\right),
$$
where $N^0_{ij}, \, j\neq i,$ is a sequence of independent Poisson processes with intensity 1, that we choose independent of $x(0)$, since $N_{ij}$ and $x(0)$ are independent by construction. Together with  \eqref{x-rep-2} this entails  that $x$ can be represented (in distribution) as 
\begin{equation}\label{x-rep-3}
x(t)=x(0)+\sum_{i,j: \,i\neq j} (j-i)N^0_{ij}\left(\int_0^t g_{ij}(s)ds\right).
\end{equation}
Using the fact that $t\mapsto N^0_{ij}(t)$ is a.s. increasing (indeed, by stationarity of the Poisson process we have for $s\le t$, $P( N^0_{ij}(t)- N^0_{ij}(s)\ge 0)=P( N^0_{ij}(t-s)\ge 0)=1$) and  Cauchy-Schwarz inequality we have 
$$
E[e^{\frac{\a}{2} |x|_T}]\le E[e^{\a x(0)}]^{1/2}E\left[\exp{\a \sum_{i,j: \,i\neq j} |j-i|N^0_{ij}\left(\int_0^T g_{ij}(s)ds\right)}\right]^{1/2}.
$$
Using the explicit form of the moment generating function of each of the independent Poisson processes, we obtain
$$
E\left[\exp{\a \sum_{i,j: \,i\neq j} |j-i|N^0_{ij}\left(\int_0^T g_{ij}(s)ds\right)}\right]=\exp{\sum_{i,j: \,i\neq j}\int_0^T \left(e^{\a|j-i|}-1\right)g_{ij}(s)ds}.
$$
Therefore, 
\begin{equation}\label{exp-3}
E[e^{\frac{\a}{2} |x|_T}]\le E[e^{\a x(0)}]^{1/2}\exp{\frac{1}{2}\sum_{i,j: \,i\neq j}\int_0^T \left(e^{\a|j-i|}-1\right)g_{ij}(s)ds}:=\kappa_0.
\end{equation}
\end{proof}

\subsection{Probability measures on $I$}

\medskip
Let $\mathcal{P}(I)$ denote the set of probability measures on $I$. For $\mu,\nu\in\mathcal{P}(I)$, the total variation distance is defined by the formula
\begin{equation}\label{TV-B}
d(\mu,\nu)=2\sup_{A\subset I}|\mu(A)-\nu(A)|=\underset{i\in I}\sum|\mu(\{i\})-\nu(\{i\})|.
\end{equation}
Furthermore, let $\mathcal{P}(\Omega)$ be the set of probability measures $P$ on $\Om$ and
$\mathcal{P}_2(\Omega)$ be the subset of probability measures $P$ on $\Om$ such that 
$$
\|P\|_2^2:=\int_{\Om}|w|^2_TP(dw)=E[|x|_T^2]<+ \infty,
$$
 where $|x|_t:=\underset{0\le s\le t}\sup |x(s)|,\,\,\,0\le t\le T$.
 \ms\no Define on $\F$ the total variation metric
\begin{equation}\label{TV-F}
d(P,Q):=2\underset{A\in\F}\sup|P(A)-Q(A)|.
\end{equation}
Similarly, on the filtration $\Ff$, we define the total variation metric between two probability measures $P$ and $Q$ as
\begin{equation}\label{TV-filt}
D_t(P,Q):=2\underset{A\in\F_t}\sup|P(A)-Q(A)|,\quad 0\le t\le T.
\end{equation}
It satisfies 
\begin{equation}\label{ordering}
D_s(P,Q)\le D_t(P,Q),\quad 0\le s\le t.
\end{equation}
 For $P, Q\in \mathcal{P}(\Om)$ with time marginals $P_t:=P\circ x^{-1}(t)$ and $Q_t:=Q\circ x^{-1}(t)$, the total variation distance between $P_t$ and $Q_t$ satisfies
\begin{equation}\label{margine}
d(P_t,Q_t)\le D_t(P,Q),\quad 0\le t\le T.
\end{equation}
Indeed, we have
\begin{equation*}\begin{array}{lll}
d(P_t,Q_t):=2\underset{B\subset I}\sup|P_t(B)-Q_t(B)|=2\underset{B\subset I}\sup|P(x^{-1}(t)(B))-Q(x^{-1}(t)(B))|\\\qquad\quad\quad\;\; \le 2\underset{A\in\mathcal{F}_t}\sup|P(A)-Q(A)|=D_t(P,Q).
\end{array}
\end{equation*}
 Endowed with the total variation metric $D_T$, $\mathcal{P}(\Om)$ is a complete metric space. Moreover, $D_T$ carries out the usual topology of weak convergence. But, $(D_T,\mathcal{P}_2(\Om))$ may not be complete simply because the total variation metric does not guarantee existence of finite moments. This makes this distance less suitable for the study of models of mean-field type where the mean-field interaction is of the type $E[x(t)]$ or $E[\varphi(x(t))]$ when $\varphi$ is a Lipschitz function. Nevertheless, as we show it below, the following subset of $\mathcal{P}_2(\Om)$
$$
 \mathbb{D}_{p,\kappa}:=\{Q\in \mathcal{P}_2(\Omega),\, dQ=XdP,\,\, X \,\text{is\,} \mathcal{F}_T\text{-measurable and \,\,} E[X^p]\leq \kappa\},
 $$
where $p>1$ and $\kappa>0$ are given constants, which fits with our framework, turns out a complete metric space when endowed with the total variation norm $D_T$. 
Using this space requires a higher degree of smoothness on the intensity matrix of the Markov chain we impose below.

\section{Jump processes of mean-field type}
In this section we prove existence of a unique probability measure $\wt P$ on $(\Omega, \F)$ under which the coordinate process $x$ is a jump process with intensities $\lambda_{ij}(t,x,{\wt P}\circ x^{-1}(t)),\,\, i,j\in I$, where we allow the jump intensities at time $t$ depend on the whole path $x$ over the time interval $[0,T]$ and also on the marginal law of $x(t)$, as long as the intensities are predictable. Because of the dependence of its jump intensities on the marginal law, we  call it   {\it jump process of mean-field type}.  If the intensities are deterministic functions of $t$ and the marginal law of $x(t)$ i.e. they are of the form $\lambda_{ij}(t, {\wt P}\circ x^{-1}(t)),\,\, i,j\in I$, we  call $x$  a {\it Markov chain of mean-field type} or simply an {\it nonlinear Markov chain}. 

The probability measure $\wt P$ is constructed as follows. We start with the probability measure $P$ which solves the martingale problem associated with $G=(g_{ij})$, where the intensities $g_{ij}$ are assumed time-independent, making the coordinate process $x$ a time-homogeneous Markov chain. Then, using a Girsanov-type change of measure in terms of a Dol{\'e}ans-Dade exponential martingale for jump processes which involves the intensities $\l_{ij}$ and $g_{ij}$, we obtain our probability measure $\wt P$. It is also possible to choose $G$ time-dependent. But, it is easier to deal with time-independent intensities.

 Let $\l$ be a measurable process from $[0,T]\times I\times I\times \Om\times \mathcal{P}(I)$ into $(0,+\infty)$ such that
 \begin{itemize}
 \item[(A1)] For every $Q\in \mathcal{P}_2(\Om)$, the process $((\l_{ij}(t, x,Q\circ x^{-1}(t)))_t$ is predictable.
 
 \item[(A2)] There exists a  positive constant $c_1$ such that for every $(t,i,j)\in[0,T]\times I\times I;\,i\neq j$, $w\in \Om$ and  $\mu, \nu \in\mathcal{P}(I)$,
 $$
 \l_{ij}(t,w,\mu)\ge c_1>0.
 $$
 
\item[(A3)] For $p=1,2,$ and for every for $t\in[0,T]$, $w\in \Om$ and  $\mu\in \mathcal{P}_2(I)$, 
 $$
 \underset{i,j: \, j\neq i}\sum |j-i|^p\l_{ij}(t,w,\mu)\le C(1+|w|^p_t+\int|y|^p\mu(dy)).
 $$
 \item [(A4)] The probability measure $\xi$ on $I$ has finite second moment: 
 $$
 \|\xi\|_2^2:=\int |y|^2\xi(dy)<\infty.
 $$
 
  \item[(A5)]  For $p=1,2,$ and for every $t\in[0,T]$, $w, \tilde w\in \Om$ and  $\mu, \nu \in\mathcal{P}(I)$,
  $$
  \underset{i,j: \, j\neq i}\sum |j-i|^p|\l_{ij}(t,w,\mu)-\l_{ij}(t,\tilde w,\nu)| \le C(|w-\tilde w|^p_t+d^p(\mu,\nu)).
  $$
 \end{itemize}
 
 \ms\no
\begin{remark}
\begin{enumerate}
\item Assumption (A4) is needed to guarantee that the chain has finite second moment.
\item Since $|j-i|\ge 1$, we obtain from (A5)  the following Lipschitz property of the intensity matrix
$$
\| \l(t,w,\mu)-\l(t,\tilde w,\nu)\| \le C(|w-\tilde w|_t+d(\mu,\nu)).
$$  
\end{enumerate}
 \end{remark}

 \ms\no 
 
\begin{example} {\bf A mean-field Schl\"ogl model}. In the mean-field  version of the Schl\"ogl model (cf. \cite{DZ}, \cite{FZ} and \cite{Fe}) the intensities are
 \begin{equation}\label{schlogl}
 \l_{ij}(w,\mu):=\left\{\begin{array}{ll} \nu_{ij} & \text{if} \,\, j\neq i+1,\\
 \nu_{i i+1}+\|\mu\|_1  & \text{if} \,\, j= i+1,
 \end{array}
 \right.
 \end{equation}
 where $\|\mu\|_1=\int |y|\mu(dy)$ is the first moment of the probability measure $\mu$ on $I$ and $(\nu_{ij})$ is a $Q$-matrix satisfying $\inf_{i\in I}\nu_{ii+1}>0$ and there exists $N_0\ge 1$ such that $\nu_{ij}=0,\,\,\text{for\,\,} |i-j|\ge N_0$. The martingale problem formulation states that, for every $f$ on $I$, the process defined by
\begin{equation*}\label{S-f-mart-1}
M_t^f:=f(x(t))-f(x(0))-\int_{(0,t]}(\wt G(s)f)(x(s))\,ds
\end{equation*}
is a local martingale relative to $(\Om,\F,\Ff)$, where 
\begin{equation}\label{S-G-f}
\wt G(s)f(i)=\sum_{j: \,j\neq i}\nu_{ij}(f(j)-f(i))+\sum_j jP(x(s)=j)(f(i+1)-f(i)).
\end{equation}
\end{example}
 
 Let $P$ be the probability under which $x$ is a time-homogeneous Markov chain such that $P\circ x^{-1}(0)=\xi$ and with time-independent $Q$-matrix $(g_{ij})_{ij}$ satisfying \eqref{G} and \eqref{exp-1}.  
 
 Assume further that
\begin{equation}\label{G2}
c_2:=\underset{i,j: i\neq j}\inf g_{ij}>0.
 \end{equation}
This condition is needed below  to obtain estimates involving the density of Girsanov-Dol{\'e}ans-Dade type change of measure between two probability measures under which the chain has jump intensities are $\l_{ij}$ and $g_{ij}$, respectively. This amounts to only taking into account nonzero jump intensities.

\ms\no
 To ease notation we set, for $(t,i,j)\in[0,T]\times I\times I$ and  $Q\in \mathcal{P}(\Om)$
 \begin{equation}\label{lambda-Q}
 \l^{Q}_{ij}(t):=\l_{ij}(t,x,Q\circ x^{-1}(t)),\quad    \l^{Q}_{ii}(t):=-\underset{j\neq i}\sum\,\l^{Q}_{ij}(t).
 \end{equation}
\noindent  Let $P^Q$ be the measure on $(\Om,\F)$ defined by 
 \begin{equation}\label{PQ}
 dP^Q:=L^Q(T) dP,
 \end{equation}
where 
\begin{equation}\label{exp-mg}
L^Q(T):=\underset{\substack{i,j\\ i\neq j}}\prod \exp{\left\{ \int_{(0,T]}\ln{\frac{ \l^{Q}_{ij}(t)}{g_{ij}}}\,dN_{ij}(t)-\int_0^T( \l^{Q}_{ij}(t)-g_{ij})I_i(t)dt  \right\}},
\end{equation}
is the Dolean-Dade exponential. It is the solution of the following linear stochastic integral equation
\begin{equation}\label{L-sde-1}
L^Q(t)=1+\int_{(0,t]} L^Q(s^-)\underset{i,j:\, i\neq j}\sum I_i(s^-)\ell^Q_{ij}(s)dM_{ij}(s),
\end{equation}
where
\begin{equation}\label{L-sde-2}
\ell^Q_{ij}(s)=\left\{\begin{array}{rl}
\l_{ij}^Q(s)/g_{ij}-1 &\text{if }\,\, i\neq j,\\ 0 & \text{if }\,\, i=j,
\end{array}
\right.
\end{equation}
and $(M_{ij})_{ij}$ is the $P$-martingale given in \eqref{mart-1}.

\ms\no If $L^Q$ is a $P$-martingale, then by Girsanov theorem,  $P^Q$ is a probability measure  on  $(\Om,\F)$ under which the coordinate process $x$ is a jump process with intensity matrix $\l^Q:=(\l^{Q}_{ij}(t))_{i,j}$ and starting distribution $P^Q\circ x^{-1}(0)=\xi$. In particular, the compensated processes associated with the counting processes $N_{ij}$ defined by
 \begin{equation}\label{mart-Q}
M^Q_{ij}(t):=N_{ij}(t)-\int_{(0,t]} I_i(s^-) \l^{Q}_{ij}(s)ds,\quad M^Q_{ij}(0)=0,
\end{equation}
are zero mean, square integrable and mutually orthogonal $P^Q$-martingales  whose  predictable quadratic variations are
\begin{equation}\label{mart-Q-2}
\langle M^Q_{ij}\rangle_t=\int_{(0,t]} I_i(s^-) \l^{Q}_{ij}(s)ds.
\end{equation} 
Using \eqref{L-sde-2}, we may write $M^Q_{ij}$ in terms of $M_{ij}$ as follows.
\begin{equation}\label{m-m-Q}
M^Q_{ij}(t)=M_{ij}(t)-\int_{(0,t]} \ell^{Q}_{ij}(s)I_i(s^-)g_{ij}ds.
\end{equation}
Now, since $L^Q$ is a positive $P$-local martingale, it is a supermartingale. Thus, $E[L^Q_T]\le 1$. In order to show that is a $P$-martingale, we need to show that $E[L^Q_T]=1$. We note that the imposed conditions (A1)-(A4) on the intensity matrix $\l^Q$ do not fit with the assumptions displayed in the literature ranging from \cite{bremaud}, Theorem T11, to \cite{sokol}, Theorem 2.4, to guarantee that $L^Q$ is a $P$-martingale.   

\ms\no To show that $L^Q$ is a $P$-martingale we will use the following apriori estimate.

\begin{lemma}\label{x-P-Q-esti} Let  $Q\in \mathcal{P}_2(\Omega)$ and assume $\l^Q$ and $\xi$ satisfy (A1)-(A4). If $P^Q$, given by \eqref{PQ}, is a probability measure  on  $(\Om,\F)$, then
\begin{equation}\label{x-esti-2}
\|P^Q\|_2^2=E_{P^Q}[|x|^2_T]\le Ce^{CT}(1+\|\xi\|_2^2+\|Q\|_2^2)<+\infty.
\end{equation}
In particular,
\begin{equation}\label{x-P-esti}
\|P\|_2^2=E[|x|^2_T]\le Ce^{CT}(1+\|\xi\|_2^2)<+\infty.
\end{equation}
\end{lemma}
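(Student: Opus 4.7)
The plan is to express $x$ under $P^Q$ as a semimartingale by combining the pathwise identity \eqref{x-rep-2} with the compensation formula \eqref{mart-Q}, obtaining
$$
x(t)=x(0)+\int_0^t\underset{i,j:\,i\neq j}\sum(j-i)I_i(s^-)\l^Q_{ij}(s)\,ds+\underset{i,j:\,i\neq j}\sum(j-i)M^Q_{ij}(t)=:x(0)+A(t)+N(t).
$$
Via $(a+b+c)^2\le 3(a^2+b^2+c^2)$, then taking the supremum over $[0,t]$ and the $P^Q$-expectation, it suffices to estimate $E_{P^Q}[\sup_{s\le t}|A(s)|^2]$ and $E_{P^Q}[\sup_{s\le t}|N(s)|^2]$.

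For the drift, at each time only one indicator $I_i(s^-)$ is nonzero, so assumption (A3) with $p=1$ gives
$$
|A(s)|\le \int_0^s\underset{i,j:\,j\neq i}\sum|j-i|I_i(u^-)\l^Q_{ij}(u)\,du \le C\int_0^s\bigl(1+|x|_u+\textstyle\int|y|\,Q_u(dy)\bigr)\,du,
$$
where $Q_u:=Q\circ x^{-1}(u)$. Cauchy-Schwarz in time together with Jensen's inequality $(\int|y|Q_u(dy))^2\le \int|y|^2Q_u(dy)$ yield $|A(s)|^2\le CT\int_0^s(1+|x|_u^2+E_Q[|x(u)|^2])\,du$. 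For the martingale, the integrand $Z_{ij}\equiv j-i$ is (trivially predictable and) constant in $s$, so Doob's inequality \eqref{Doob}, whose proof carries over verbatim to $P^Q$ with $M$ and $g$ replaced by $M^Q$ and $\l^Q$, combined with (A3) for $p=2$, gives
$$
E_{P^Q}\!\left[\sup_{s\le t}|N(s)|^2\right]\le 4E_{P^Q}\!\left[\int_0^t\underset{i,j:\,i\neq j}\sum(j-i)^2I_i(s^-)\l^Q_{ij}(s)\,ds\right]\le C\int_0^t E_{P^Q}\!\left[1+|x|_s^2+E_Q[|x(s)|^2]\right]ds.
$$

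The one delicate point is that we do not a priori know that $E_{P^Q}[|x|_t^2]$ is finite, so Gronwall cannot be applied directly. The remedy is to truncate via the stopping times $\tau_n:=\inf\{s\ge 0:|x(s)|>n\}\wedge T$: since $|x|_{\cdot\wedge\tau_n}\le n+|x(0)|$, the quantity $u_n(t):=E_{P^Q}[|x|_{t\wedge\tau_n}^2]$ is finite thanks to (A4). Evaluating the two bounds above at $t\wedge\tau_n$, using $\int_0^T E_Q[|x(s)|^2]\,ds\le T\|Q\|_2^2$, and collecting constants leads to a linear Gronwall-type inequality
$$
u_n(t)\le C\bigl(1+\|\xi\|_2^2+\|Q\|_2^2\bigr)+C\int_0^t u_n(s)\,ds,
$$
with $C=C(T)$. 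Gronwall's lemma gives $u_n(t)\le Ce^{CT}(1+\|\xi\|_2^2+\|Q\|_2^2)$ uniformly in $n$, and Fatou's lemma as $n\to\infty$ concludes \eqref{x-esti-2}. The estimate \eqref{x-P-esti} follows by the same argument applied to $P$ itself with $\l^Q_{ij}$ replaced by the time-independent $g_{ij}$: condition \eqref{exp-1} implies $\sum_{i,j:\,i\neq j}|j-i|^p g_{ij}<+\infty$ for every $p\ge 1$, which plays the role of (A3) for $g$, and the $\|Q\|_2^2$-term is absent. The main (and essentially the only) obstacle is this a-priori finiteness issue, which is why the stopping-time truncation is indispensable; once in place the argument is a routine martingale-plus-Gronwall estimate.
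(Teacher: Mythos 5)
Your argument is correct and takes essentially the same route as the paper's proof: both decompose $x$ under $P^Q$ into initial value, compensator and the martingale $\sum_{i,j:\,i\neq j}(j-i)M^Q_{ij}$, bound the two terms via (A3) with $p=1,2$ together with Cauchy--Schwarz and Doob's inequality, and close with Gronwall; the second estimate is likewise obtained by running the argument under $P$ with $g$ in place of $\l^Q$. The stopping-time localization you add is a welcome precaution that the paper silently skips; only note that $|x|_{t\wedge\tau_n}$ can exceed $n+|x(0)|$ because of the overshoot of the jump at $\tau_n$, so the finiteness of $u_n(t)$ should be read off from the truncated estimate itself (whose right-hand side involves $|x|_u$ only for Lebesgue-a.e.\ $u<\tau_n$, where the bound by $n\vee|x(0)|$ is valid) rather than from that crude pathwise bound.
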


\begin{proof} Since, by Girsanov theorem, under $P^Q$, $x$ has jump intensity $\l_{ij}(t,x,Q\circ x^{-1}(t))$, applying a similar formula as \eqref{f-mart-1} to $f(x)=x$, where instead of $G$ we use the matrix $\l^Q$, we obtain
\begin{equation}\label{x-mart-1}
x(t)=x(0)+\int_{(0,t]}\l^Q(s)x(s)\,ds+M_t^x,
\end{equation}
where 
\begin{equation}\label{x-mart-2}
M^x(t)=\int_{(0,t]}Z^x(s)dM(s)=\underset{i,j: \, j\neq i}\sum \,(j-i)M_{ij}(t),
\end{equation}
with 
\begin{equation*}\label{x-mart-3}
\|Z^x(s)\|^2_{\l^Q}=\underset{i,j: \,j\neq i}\sum (j-i)^2I_i(s^-)\l^Q_{ij}(s),
\end{equation*}
and 
\begin{equation*}
\l^Q(s)x(s)=\underset{i,j: \,j\neq i}\sum(j-i)I_i(s^-)\l^Q_{ij}(s),
\end{equation*}
which, in view of (A3), satisfy
\begin{equation}\label{G-x}
|\l^Q(s)x(s)|^2+\|Z^x(s)\|^2_{\l^Q}\le C\big(1+|x|^2_s+\int w_s^2Q(dw)\big),\,\, 0\le s\le T.
\end{equation}
Therefore, applying the Cauchy-Schwarz inequality together with \eqref{Doob} to \eqref{x-mart-1} we obtain
\begin{equation*}
E_{P^Q}[|x|^2_T]\le C E_{P^Q}\left[|x(0)|^2+\int_{(0,T]} \left(|\l^Q(s)x(s)|^2+\|Z^x(s)\|^2_{\l^Q}\right)\,ds\right],
\end{equation*}
and by \eqref{G-x} we get
\begin{equation}\label{x-esti}
E_{P^Q}[|x|^2_T]\le C\Big(1+\|\xi\|_2^2+\int_{(0,T]}\left(E_{P^Q}[|x|^2_s]+\int w_s^2Q(dw)\right)\,ds\Big).
\end{equation}
Using $\int w_s^2Q(dw)\le \int |w|_s^2Q(dw)\le \|Q\|_2^2$ and applying Gronwall's inequality we finally get \eqref{x-esti-2}.
\end{proof}

\begin{proposition}\label{L-Q-mart} Let  $Q\in \mathcal{P}_2(\Omega)$ and assume $\l^Q$ and $\xi$ satisfy (A1)-(A4). Then, $L^Q$ is a $P$-martingale.
\end{proposition}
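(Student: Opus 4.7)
The plan is a classical localization argument: exhibit stopping times $\tau_n\uparrow T$ along which $L^Q$ is a true martingale, and then pass to the limit using a uniform moment bound for $x$ under the localized measures. Being a non-negative local $P$-martingale (by \eqref{L-sde-1}), $L^Q$ is a supermartingale, so $E[L^Q_T]\le 1$; the work lies in obtaining the reverse inequality.

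Set $\tau_n:=\inf\{t\in[0,T]:|x(t)|\ge n\}\wedge T$. The bound \eqref{x-P-esti}, which follows by running the proof of Lemma \ref{x-P-Q-esti} under $P$ itself, gives $E[|x|_T^2]<\infty$, so by Chebyshev $\tau_n\to T$, $P$-a.s. On the stochastic interval $[0,\tau_n]$ we have $|x|_t\le n$; combined with (A2), (A3), \eqref{G2} and $\|Q\|_2<\infty$, both $\l^{Q}_{ij}$ and $\ell^{Q}_{ij}$ are uniformly bounded by a constant depending only on $n$ and $\|Q\|_2$. Together with the summability $\sum_{i,j:i\neq j}g_{ij}<\infty$ from \eqref{G} (valid because $g$ is time-independent), a direct second-moment estimate on \eqref{L-sde-1} via the Doob inequality \eqref{Doob} and Gronwall's lemma yields $E[\sup_{t\le T}(L^Q_{t\wedge\tau_n})^2]<\infty$. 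Hence $L^Q_{\cdot\wedge\tau_n}$ is a square-integrable $P$-martingale, $E[L^Q_{T\wedge\tau_n}]=1$, and $dP^Q_n:=L^Q_{T\wedge\tau_n}\,dP$ defines a probability on $(\Om,\F)$.

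By Girsanov's theorem, under $P^Q_n$ the coordinate chain has intensities $\l^{Q}$ on $[0,\tau_n]$ and reverts to $g$ on $(\tau_n,T]$. Both matrices satisfy a linear-growth bound of the form $\sum_{i,j:i\neq j}|j-i|^p(\cdot)\le C(1+|x|_s^p+\|Q\|_2^p)$ for $p=1,2$ (from (A3) for $\l^Q$, and from the constant bound $\sum_{i,j:i\neq j}|j-i|^p g_{ij}<\infty$ deduced from \eqref{exp-1} for $g$). Repeating the derivation of \eqref{x-esti-2} verbatim with this piecewise intensity, Gronwall's inequality gives
$$
E_{P^Q_n}[|x|_T^2]\le Ce^{CT}\bigl(1+\|\xi\|_2^2+\|Q\|_2^2\bigr),
$$
with $C$ independent of $n$. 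Chebyshev then yields $P^Q_n(\tau_n<T)=P^Q_n(|x|_T\ge n)\le C/n^2\to 0$.

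To conclude, observe that $L^Q_{T\wedge\tau_n}=L^Q_T$ on $\{\tau_n=T\}$, so
$$
P^Q_n(\tau_n=T)=E\bigl[L^Q_{T\wedge\tau_n}\mathbf{1}_{\{\tau_n=T\}}\bigr]=E\bigl[L^Q_T\mathbf{1}_{\{\tau_n=T\}}\bigr].
$$
Since $\tau_n\uparrow T$ $P$-a.s., $\mathbf{1}_{\{\tau_n=T\}}\uparrow 1$ $P$-a.s., and monotone convergence on the right together with $P^Q_n(\tau_n=T)\to 1$ on the left gives $E[L^Q_T]=1$, proving that $L^Q$ is a true $P$-martingale. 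The main obstacle is the uniform-in-$n$ bound of Step 3: one must verify that the argument of Lemma \ref{x-P-Q-esti} adapts cleanly to the piecewise intensity $(\l^Q,g)$, which works precisely because $g$ is time-independent with summable jump moments (from \eqref{exp-1}), producing the same linear-growth control that drives the Gronwall step.
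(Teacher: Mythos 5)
Your proof is correct, and while it shares the paper's overall localization philosophy and its key ingredient --- the uniform-in-$n$ second-moment bound obtained by rerunning Lemma \ref{x-P-Q-esti} for the localized measures --- the route is genuinely different in two respects. First, where the paper truncates the \emph{intensity} (setting $\l^n_{ij}(t)=\l^Q_{ij}(t)\mathbf{1}_{\{|x|_{t^-}\le n\}}$, so that boundedness of $\l^n$ lets it invoke Br\`emaud's Theorem T11 to conclude $E[L^n_T]=1$), you stop the \emph{exponential} at $\tau_n$ and prove $E[(L^Q_{T\wedge\tau_n})^2]<\infty$ directly from \eqref{L-sde-1} via the isometry and Gronwall; this is self-contained and avoids the external citation, at the cost of having to handle the Girsanov measure with the piecewise intensity $(\l^Q,g)$, which you do correctly (the linear growth of $g$ needed there does follow from \eqref{exp-1}). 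Second, and more substantially, you bypass the paper's uniform-integrability argument for $(L^n_T)_n$ entirely: the inequality $E[L^Q_T]\ge E[L^Q_T\mathbf{1}_{\{\tau_n=T\}}]=P^Q_n(\tau_n=T)\ge 1-C/n^2$ combined with the supermartingale bound $E[L^Q_T]\le 1$ closes the proof in one line, which is cleaner than the paper's two-parameter $(n,m)$ truncation scheme. One small caveat, which your argument shares with the paper's: since the jumps of $x$ are unbounded, $|x|_{\tau_n}$ itself need not be $\le n$, so the claim that $\l^Q_{ij}$ is bounded by a constant depending on $n$ on all of $[0,\tau_n]$ (rather than on $[0,\tau_n)$) implicitly uses that the predictable intensity at time $s$ is controlled by $|x|_{s^-}$; this is the same convention the paper adopts when it bounds $\l^n_{ij}(t)\le C(1+n+\|Q\|_2)$, so it is not a gap relative to the paper's own level of rigor, but it is worth stating explicitly.
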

\begin{proof}  The proof is inspired by the proof of Proposition (A.1) in \cite{EH}.  As mentioned above, it suffices to prove that $E[L^Q_T]=1$. For $n\ge 0$, let $\l^n$ be the predictable intensity matrix given by $\l^n_{ij}(t):=\l^Q_{ij}(t)\mathbf{1}_{\{\omega,\,\,|x(\omega)|_{t^-}\le n\}}$ and let $L^n$ be the associated Dolean-Dade exponential and $P^n$ the positive measure defined by $dP^n=L^n_T dP$.  Noting that, for $i, j\in I, \,i\neq j$, $|i-j|\ge 1$, by (A3), we have
$$
\l_{ij}(t,w,\mu)\le C(1+|w|_t+\int|y|\mu(dy)).
$$
Thus, for every $n\ge 1$, $\l^n_{ij}(t)\le C(1+n+\|Q\|_2)$, i.e. $\l^n_{ij}$ is bounded. In view of \cite{bremaud}, Theorem T11, $L^n$ is a $P$-martingale. In particular, $E[L^n_T]=1$ and $P^n$ is a probability measure. By \eqref{x-P-esti}, $|x|_T<\infty,\, P$-a.s. Therefore, on the set $\{\omega,\,\,|x(\omega)|_T\le n_0\}$, for all $n\ge n_0$, $L^n_T(\omega)=L_T^Q(\omega)$. This in turn yields that 
$L^n_T\to L_T^Q,\,\,P$-a.s., as $n\to +\infty$. Now, if $(L^n_T)_{n\ge 1}$ is uniformly integrable, the $P$-a.s. convergence implies $L^1(P)$-convergence of $L^n_T$ to $L_T^Q$, yielding  $E[L^Q_T]=1$. It remains to show that $(L^n_T)_{n\ge 1}$ is uniformly integrable: 
$$
\lim_{a\to \infty}\sup_{n\ge 1}\int_{\{L^n_T>a\}} L^n_T\,dP=0.
$$
For $m\ge 1$, set $\t_m=\inf\{t\le T,\,\, |x|_t\ge m\}$ if the set is nonempty and $\t_m=T+1$ if it is empty. Denoting by $E^n$ the expectation w.r.t. $P^n$, we have
\begin{equation}\label{tau-m}\begin{array}{lll}
\int_{\{\t_m\le T\}} L^n_T\,dP=P^n(\t_m\le T)=P^n(|x|_T\ge m)\\ \qquad\qquad\qquad\quad \le E^n[|x|_T]/m\le C/m,
\end{array}
\end{equation}
where, by \eqref{x-esti-2}, $C$ does not depend on $n$.  

\ms\no Let $\e>0$. Choose $m_0\ge 1$ such that $C/m_0<\e$. We have, for all $n\ge m_0$, $L^n_{T\wedge \t_{m_0}}=L^{m_0}_{T\wedge \t_{m_0}}$. This entails that
\begin{equation*}
\sup_{n\ge 1}\int_{\{L^n_{T\wedge \t_{m_0}}>a\}} L^n_{T\wedge \t_{m_0}}\,dP= \max_{n\le m_0}\int_{\{L^n_{T\wedge \t_{m_0}}>a\}} L^n_{T\wedge \t_{m_0}}\,dP \to 0,\,\, a\to  \infty.
\end{equation*}
So there exists $a_0>0$ such that whenever $a>a_0$, 
\begin{equation}\label{L-m}
\max_{n\le m_0}\int_{\{L^n_{T\wedge \t_{m_0}}>a\}} L^n_{T\wedge \t_{m_0}}\,dP< \e.
\end{equation}
We have
$$
\begin{array}{lll}
\underset{n\ge 1}\sup \int_{\{L^n_T>a\}} L^n_T\,dP\le\underset{n\ge 1}\sup\int_{\{L^n_T>a,\, \t_{m_0}\le T\}} L^n_T\,dP+\underset{n\ge 1}\sup\int_{\{L^n_T>a,\, \t_{m_0}> T\}} L^n_T\,dP \\ \qquad\qquad\qquad\qquad \le \underset{n\ge 1}\sup\int_{\{\t_{m_0}\le T\}} L^n_T\,dP+\underset{n\ge 1}\sup\int_{\{L^n_{T\wedge \t_{m_0}}>a,\, \t_{m_0}> T\}} L^n_{T\wedge \t_{m_0}}\,dP \\ \qquad\qquad\qquad\qquad \le \underset{n\ge 1}\sup\int_{\{\t_{m_0}\le T\}} L^n_T\,dP+\underset{n\le m_0 }\max\int_{\{L^n_{T\wedge \t_{m_0}}>a\}} L^n_{T\wedge \t_{m_0}}\,dP\\
\qquad\qquad\qquad\qquad\le C/m_0+\e < 2\e,
\end{array}
$$
 in view of \eqref{tau-m} and \eqref{L-m}. This finishes the proof since $\e$ is arbitrary.
\end{proof}

\ms\noindent Next, we will show that there is $\widehat Q$ such that $P^{\widehat Q}={\widehat Q}$, i.e., $\widehat Q$ is a fixed point. It is the probability measure under which the coordinate process is a jump process of mean-field type.  

\begin{lemma}\label{subspace} Let $p>1$ and $\kappa>0$ be given constants. The set $\mathbb{D}_{p,\kappa}$ defined by  
$$
 \mathbb{D}_{p,\kappa}:=\{Q\in \mathcal{P}_2(\Omega),\, dQ=XdP,\,\, X \,\text{is\,} \mathcal{F}_T\text{-measurable and \,\,} E[X^p]\leq \kappa\},
 $$
endowed withe total variation norm $D_T$ is a complete metric space.
 \end{lemma}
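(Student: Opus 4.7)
The plan is to leverage the fact that total variation distance between two measures absolutely continuous with respect to a common reference measure equals the $L^1$-distance of their densities, thereby reducing completeness of $(\mathbb{D}_{p,\kappa},D_T)$ to completeness of a bounded subset of $L^p(P)$.

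First, I would record the identity $D_T(Q_n,Q_m)=\int_\Omega |X_n-X_m|\,dP=\|X_n-X_m\|_{L^1(P)}$ whenever $dQ_n=X_n\,dP$ and $dQ_m=X_m\,dP$, which follows directly from the definition \eqref{TV-F} and the Hahn decomposition applied to $X_n-X_m$. Given a Cauchy sequence $(Q_n)\subset \mathbb{D}_{p,\kappa}$ with densities $X_n$, this identity shows that $(X_n)$ is Cauchy in $L^1(P)$, hence converges in $L^1(P)$ to some nonnegative $X$ (nonnegativity and the equality $E[X]=1$ pass to the limit because $L^1$-convergence preserves integrals and admits an a.s.-convergent subsequence).

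Next, I would define $Q$ by $dQ=X\,dP$; since $x$ lives on $[0,T]$ we have $\mathcal{F}=\mathcal{F}_T$ so this gives a probability measure on $(\Omega,\mathcal{F})$. To verify $Q\in \mathbb{D}_{p,\kappa}$ it remains to check two points. The $L^p$-bound $E[X^p]\le \kappa$ follows by passing to an a.s.-convergent subsequence and applying Fatou's lemma. For the second moment condition $Q\in \mathcal{P}_2(\Omega)$, I would use H\"older's inequality with conjugate exponents $p$ and $q:=p/(p-1)$:
\begin{equation*}
E_Q[|x|_T^2]=E[X|x|_T^2]\le \bigl(E[X^p]\bigr)^{1/p}\bigl(E[|x|_T^{2q}]\bigr)^{1/q}\le \kappa^{1/p}\,C_{2q}^{1/q},
\end{equation*}
where $C_{2q}<\infty$ by the moment bound \eqref{q-norm} of Lemma \ref{exp-est} (applicable since $P$ is the law of a Markov chain with $Q$-matrix $(g_{ij})$ satisfying the exponential integrability \eqref{exp-1} and $\|\xi\|_2<\infty$).

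Finally, convergence $D_T(Q_n,Q)\to 0$ is immediate from the initial identity and the established $L^1$-convergence of the densities, completing the proof. The only genuine subtlety is the passage to the second-moment condition: without the $L^p$-bound (with $p>1$) one could not invoke H\"older, and without the exponential integrability of the reference chain built into \eqref{exp-1}, the resulting moment $E[|x|_T^{2q}]$ would not be finite. Everything else is a routine completeness argument in $L^1(P)$, carried out inside the convex subset of densities cut out by the constraints $X\ge 0$, $E[X]=1$, and $E[X^p]\le \kappa$, each of which is preserved under $L^1$-limits (using Fatou for the last one).
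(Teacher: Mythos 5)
Your proof is correct, but it follows a genuinely different route from the paper's. You begin by identifying $D_T$, restricted to measures absolutely continuous with respect to $P$, with the $L^1(P)$-distance of the densities, so the Cauchy property transfers directly to $(X_n)$ in $L^1(P)$; the limit density $X$ is then obtained by \emph{strong} $L^1$-convergence, and the bound $E[X^p]\le\kappa$ follows from Fatou along an a.s.-convergent subsequence. The paper instead first invokes completeness of $(\mathcal{P}(\Omega),D_T)$ to produce the limit measure $Q$, then uses weak sequential compactness of the $L^p$-bounded ball (this is where $p>1$ enters for them) to extract a weakly convergent subsequence of densities, identifies the weak limit as $dQ/dP$ by passing to the limit in $Q_n(A)=E[X_n\mathbf{1}_A]$, and obtains $E[X^p]\le\kappa$ from weak lower semicontinuity of the norm. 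Your argument is more elementary (no weak compactness or lower semicontinuity needed), yields the stronger conclusion that the densities converge in $L^1(P)$ so that $Q\ll P$ is automatic, and uses $p>1$ only in the final H\"older step; the paper's weak-compactness route is the one that would survive if one only had convergence of $Q_n(A)$ setwise rather than in total variation. The concluding moment estimate is identical in both proofs (your exponent $(E[|x|_T^{2q}])^{1/q}$ corrects a typo in the paper, which writes $1/p$ there). One shared imprecision: the finiteness of $E[|x|_T^{2q}]$ via \eqref{q-norm} of Lemma \ref{exp-est} requires the exponential integrability of $x(0)$ in \eqref{exp-1-1} (supplied by (A7) in the context where the lemma is applied), not merely $\|\xi\|_2<\infty$; the paper is equally terse on this point.
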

 \begin{proof} Let $(Q_n)_{n\geq 0}$ be a Cauchy sequence in $(\mathbb{D}_{p,\kappa},D_T)$. Thus, for any $n\geq 0$, $dQ^n=X_ndP$ with $E[X_n^p]\leq \kappa$. Since $(\mathcal{P}(\Omega), D_T)$ is a complete metric space and $\mathbb{D}_{p,\kappa}\subset \mathcal{P}(\Omega)$, there exists a probability $Q\in \mathcal{P}(\Omega)$ such that $D_T(Q_n,Q)\to 0$ as $n\to\infty$. Next, since $p>1$, there exists a subsequence, which we still denote by $(X_n)_{n\geq 0}$, and an $\mathcal{F}_T$-measurable random variable $X$ such that  $X_n\rightarrow X$ weakly in $L^p(\Omega,\mathcal{F}_T,P)$, as $n\to\infty$. But, for any $A\in \mathcal{F}_T$
 $$
 \lim_nQ^n(A)=\lim_nE[X_n1_A]=E[X1_A]=Q(A).
 $$
This entails that the probability $Q$ has a density w.r.t. $P$ which is given by $X$. On the other hand, by the semi-continuity of the norm w.r.t. the weak topology, we obtain $E[X^p]\leq \liminf_nE[X_n^p]\leq \kappa.$ Finally, by H\"older's inequality, we have 
$$
E_Q[|x|_T^2]=E[|x|_T^2X]\leq (E[X^p])^{\frac{1}{p}}(E[|x|_T^{2q}])^{\frac{1}{p}}<+\infty
$$ 
for $q$ such that $\frac{1}{p}+\frac{1}{q}=1$, since, by \eqref{exp-1},  $|x|_T$ satisfies (\ref{q-norm}). Hence, $Q$ belongs to  $ \mathbb{D}_{p,\kappa}$ and $(\mathbb{D}_{p,\kappa},D_T)$ is a complete metric space. 
\end{proof}

 \medskip
\begin{theorem}\label{FP-tv} Assume (A1)-(A5) and consider the metric space $(\mathcal{P}(\Om),D_T)$. Then, the map 
\begin{equation*}\begin{array}{lll}
\Phi: \mathcal{P}_2(\Om)\longrightarrow \mathcal{P}_2(\Om) \\ 
\qquad\quad Q  \mapsto \Phi(Q):=P^Q;\quad   dP^Q:=L^Q(T) dP,\,\,\, P^Q\circ x^{-1}(0)= \xi,
\end{array}
\end{equation*} 
is well defined. 

\ms\no
Assume further that the intensities $\lambda_{ij}$ satisfy the following condition.
\begin{itemize}
\item[(A6)]  For every for $t\in[0,T]$, $w\in \Om$ and  $\mu\in \mathcal{P}_1(I)$, 
 $$
 \underset{i,j: \, j\neq i}\sum \l^2_{ij}(t,w,\mu)\le C(1+|w|_t+\int |y|\mu(dy)).
 $$
 \item[(A7)] There exits $\g\ge \frac{2T}{c_2}$ such that $\int e^{\g y}\xi(dy)<\infty$.
 \end{itemize}
 
Then, the probability density $L^Q_T$ is bounded in $L^2(P)$. Moreover, there exists a constant $\kappa>0$ such that $\Phi(\mathbb{D}_{2,\kappa})\subset \mathbb{D}_{2,\kappa}$. Furthermore, $\Phi$ admits a fixed point in $\mathbb{D}_{2,\kappa}$.

If $\widehat{Q}$ denotes such a fixed point, it satisfies
\begin{equation}\label{x-Q-bar}
\|\widehat Q\|_2^2\le Ce^{2CT}(1+\|\xi\|_2^2)<\infty.
\end{equation}
\end{theorem}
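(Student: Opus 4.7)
The plan is to proceed in five stages. First, well-definedness of $\Phi$: given $Q \in \mathcal{P}_2(\Om)$, Proposition \ref{L-Q-mart} gives that $L^Q$ is a $P$-martingale, so $dP^Q := L^Q(T)\,dP$ is a probability measure on $(\Om,\F)$, and Lemma \ref{x-P-Q-esti} gives $\|P^Q\|_2 < \infty$. Second, for the $L^2(P)$-boundedness of $L^Q_T$, I would apply It\^o's formula to $(L^Q_t)^2$ using \eqref{L-sde-1}, yielding $(L^Q_T)^2 = 1 + (\text{local martingale})_T + \int_0^T (L^Q_{s^-})^2 \sum_{i,j:\,i\neq j} \frac{(\l^Q_{ij}(s) - g_{ij})^2}{g_{ij}} I_i(s^-)\,ds$. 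By (A6), the lower bound $g_{ij} \ge c_2$ from \eqref{G2}, and the summability of $\{g_{ij}\}$, the integrand is dominated by $C(L^Q_{s^-})^2(1 + |x|_s + \|Q\|_2)$. Equivalently, factorize $(L^Q_T)^2 = \wt L^Q_T \cdot A^Q_T$, where $\wt L^Q$ is the Dol{\'e}ans-Dade exponential corresponding to intensity $\wt g_{ij} := (\l^Q_{ij})^2/g_{ij}$ under $P$ (so that $E[\wt L^Q_T] \le 1$) and $A^Q_T \le \exp(C(1+\|Q\|_2) + (C/c_2)|x|_T)$; Cauchy--Schwarz together with the exponential moment \eqref{exp-2} of Lemma \ref{exp-est}, whose hypotheses are ensured by (A7), then yields $E[(L^Q_T)^2] \le K(\|Q\|_2) < \infty$ for an explicit function $K$ with at most polynomial growth.

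Third, to verify $\Phi(\mathbb{D}_{2,\kappa}) \subset \mathbb{D}_{2,\kappa}$: for $Q \in \mathbb{D}_{2,\kappa}$ with density $X = dQ/dP$ satisfying $E[X^2] \le \kappa$, H\"older's inequality and the polynomial moment \eqref{q-norm} yield $\|Q\|_2^2 = E[X|x|_T^2] \le (E[X^2])^{1/2}(E[|x|_T^4])^{1/2} \le C\sqrt{\kappa}$, so by Stage 2, $E[(L^Q_T)^2] \le K(C \kappa^{1/4})$; polynomial growth of $K$ lets us fix $\kappa$ large enough that the right-hand side does not exceed $\kappa$. Fourth, for the fixed-point existence on the complete metric space $(\mathbb{D}_{2,\kappa}, D_T)$ (Lemma \ref{subspace}), use the density characterization $D_T(P^Q, P^{Q'}) = E[|L^Q_T - L^{Q'}_T|] \le (E[(L^Q_T - L^{Q'}_T)^2])^{1/2}$. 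Subtracting \eqref{L-sde-1} for $L^Q$ and $L^{Q'}$, applying It\^o to $(L^Q_t - L^{Q'}_t)^2$, and using the Lipschitz assumption (A5) together with the marginal comparison \eqref{margine} (which gives $d(Q\circ x^{-1}(s), Q'\circ x^{-1}(s)) \le D_s(Q,Q')$), one obtains $E[(L^Q_T - L^{Q'}_T)^2] \le C \int_0^T (E[(L^Q_s - L^{Q'}_s)^2] + D_s(Q,Q')^2)\,ds$; iterating this estimate in the style of a Picard--Lindel\"of bound gives $D_T(\Phi^n(Q), \Phi^n(Q')) \le C_n D_T(Q,Q')$ with $C_n \to 0$, so $\Phi^n$ is a contraction for $n$ large and Banach delivers a unique fixed point $\widehat Q \in \mathbb{D}_{2,\kappa}$.

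Finally, for the estimate on $\widehat Q$: since $\widehat Q = P^{\widehat Q}$, specialize \eqref{x-esti} to $Q = \widehat Q$ and note that $\int w_s^2 \widehat Q(dw) = E_{\widehat Q}[x(s)^2] \le E_{\widehat Q}[|x|_s^2]$, yielding $E_{\widehat Q}[|x|_T^2] \le C(1 + \|\xi\|_2^2 + 2 \int_0^T E_{\widehat Q}[|x|_s^2]\,ds)$; Gronwall's lemma then delivers $\|\widehat Q\|_2^2 \le C e^{2CT}(1 + \|\xi\|_2^2)$. The main obstacle is the delicate bookkeeping in Stage 2: one must ensure the integrand $\sum_{i,j:\,i\neq j} (\l^Q_{ij}(s) - g_{ij})^2/g_{ij}$, while growing only polynomially in $\|Q\|_2$, still yields a finite $E[(L^Q_T)^2]$ after the $(L^Q_{s^-})^2$ factor is handled. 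This is precisely why (A6) is imposed, and why (A7), via Lemma \ref{exp-est}, is used to absorb the exponential factor $\exp(C|x|_T)$ coming from $A^Q_T$ while keeping the bound on $E[(L^Q_T)^2]$ grow no faster than a polynomial in $\|Q\|_2$, so that the stability $\Phi(\mathbb{D}_{2,\kappa}) \subset \mathbb{D}_{2,\kappa}$ can be closed at some finite $\kappa$.
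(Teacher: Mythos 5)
Your overall architecture matches the paper's (well-definedness from Lemma \ref{x-P-Q-esti} and Proposition \ref{L-Q-mart}, a multiplicative decomposition of $(L^Q_T)^2$ plus the exponential moment of Lemma \ref{exp-est}, a fixed point on the complete space $(\mathbb{D}_{2,\kappa},D_T)$, and Gronwall for \eqref{x-Q-bar}), but two of your steps have genuine gaps.

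First, Stage 2. Writing $(L^Q_T)^2=\wt L^Q_T A^Q_T$ with $\wt L^Q$ the Dol\'eans--Dade exponential for the intensity $(\l^Q_{ij})^2/g_{ij}$ only gives $E[\wt L^Q_T]\le 1$; Cauchy--Schwarz then produces $E[\wt L^Q_T A^Q_T]\le E[(\wt L^Q_T)^2]^{1/2}E[(A^Q_T)^2]^{1/2}$, and $E[(\wt L^Q_T)^2]$ is an object of exactly the same kind as the quantity you are trying to bound, so the argument is circular; bounding $E[\wt L^Q_T A^Q_T]$ by $\|A^Q_T\|_\infty$ also fails since $A^Q_T$ involves a factor $e^{c|x|_T}$. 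The paper resolves this by choosing the martingale factor $\varphi_T$ so that $\varphi_T^2$ --- not $\varphi_T$ --- is the Dol\'eans--Dade supermartingale, whence $E[\varphi_T^2]\le 1$ and H\"older gives $E[(L^Q_T)^2]\le E[\psi_T^2]^{1/2}$ directly; the remaining estimate ((A6), $g_{ij}\ge c_2$, and (A7) feeding Lemma \ref{exp-est} with $\a=2T/c_2$) is as you describe. Relatedly, the resulting bound has the form $\exp\{cT(1+\|Q\|_1)/c_2\}\,\kappa_0$, which is exponential, not polynomial, in $\|Q\|_1$; your Stage 3 argument ``fix $\kappa$ large enough'' therefore does not close as stated.

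Second, Stage 4. Your $L^2$ Gronwall estimate on $L^Q_T-L^{Q'}_T$ does not go through: by the isometry \eqref{isometry} the term $E\bigl[(L^Q_{s^-}-L^{Q'}_{s^-})^2\|\ell^Q(s)\|_g^2\bigr]$ appears, and $\|\ell^Q(s)\|_g^2$ is of order $1+|x|_s+\|Q\|_1$ because the intensities are unbounded, so it cannot be pulled out to yield $C\,E[(L^Q_s-L^{Q'}_s)^2]$. This is precisely the difficulty the paper's argument is built to avoid: it bounds $D_T^2(\Phi(Q),\Phi(\wt Q))$ by the relative entropy via the Csisz\'ar--Kullback--Pinsker inequality, then bounds the entropy by $\frac{1}{2c_1}\sum E_{\Phi(Q)}[\int_0^T(\l^Q_{ij}(t)-\l^{\wt Q}_{ij}(t))^2dt]$ using (A2) and the convexity of $\tau(x)=x\log x-x+1$; assumption (A5) then gives the deterministic bound $C\,d^2(Q_t,\wt Q_t)\le C\,D_t^2(Q,\wt Q)$ with no density factors left over, and the iteration you describe finishes the proof. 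Your Stages 1 and 5 coincide with the paper's.
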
 
\begin{proof}
Let  $Q\in \mathcal{P}_2(\Omega)$. Then, by \eqref{x-esti-2}, 
\begin{equation*}
\|\Phi(Q)\|_2^2=E_{\Phi(Q)}[|x|^2_T]\le Ce^{CT}(1+\|\xi\|_2^2+\|Q\|_2^2)<+\infty,
\end{equation*}
which implies that $\Phi(Q)\in \mathcal{P}_2(\Om)$, since $\|\xi\|_2^2<+\infty$ by (A4).

\ms
Next, we show that $E[(L^Q_T)^{2}]\le \kappa$ for some constant $\kappa>0$. Let 
$h_{ij}(t):=\lambda^Q_{ij}(t)/g_{ij}$
and define
$$
\varphi_T:=\underset{\substack{i,j\\ i\neq j}}\prod \exp{\left\{ \int_{(0,T]}\ln{h^{2}_{ij}}\, dN_{ij}(t)-\int_0^T\left( \frac{h^{4}_{ij}(t)}{2}-\frac{1}{2}\right)g_{ij}I_i(t)dt  \right\}}
$$
and
$$
\psi_T:=\underset{\substack{i,j\\ i\neq j}}\prod \exp{\left\{ \int_0^T \left( \frac{h^{4}_{ij}(t)}{2}-\frac{1}{2}-2(h_{ij}(t)-1) \right)g_{ij}I_i(t)dt \right\}}.
$$
We have
$$
(L^Q_T)^{2}=\varphi_T\psi_T.
$$
The choice of $\varphi_T$ is made so that the process $\varphi^{2}_t$ is a Doleans-Dade  positive supermartingale satisfying $E[\varphi^{2}_T]\le 1$. Furthermore, by \eqref{G}, \eqref{G2} and (A6), we have
$$\begin{array}{lll}
\psi_T(\a)\le\underset{\substack{i,j\\ i\neq j}}\prod \exp{\left\{ \int_0^T \frac{(\lambda^Q_{ij}(t))^2}{2g_{ij}}+\frac{3}{2}g_{ij} \right\}}\\ \qquad\quad \le \exp{\left\{ \frac{CT}{2c_2}(1+|x|_T+\|Q\|_1)+\frac{3T}{2}\underset{\substack{i,j:\,i\neq j}}\sum g_{ij} \right\}}. 
\end{array}
$$
Therefore,
\begin{equation}\label{psi-est}
E[\psi^{2}_t(\a)]^{1/2}\le \mu_TE\left[e^{\frac{T}{c_2}|x|_T}  \right]^{1/2},
\end{equation}
where 
$$
\mu_T:=exp{\left\{T\left[\frac{C}{c_2}(1+\|Q\|_1)+3\underset{\substack{i,j:\,i\neq j}}\sum g_{ij}\right]\right\}}.
$$
Since, by H\"older inequality, we have 
$$ 
E[(L^Q_T)^2]\le E[\varphi^{2}_T]^{1/2}E[\psi^{2}_T]^{1/2}\le E[\psi^{2}_T]^{1/2},
$$
It follows from \eqref{psi-est} that
$$
E[(L^Q_T)^{2}]\le \mu_TE\left[e^{\frac{T}{c_2}|x|_T} \right]^{1/2}.
$$
Setting $\a:=2\frac{T}{c_2}$, in view of \eqref{exp-3}, we obtain
$$
E[(L^Q_T)^{2}]\le \mu_T\kappa_0:=\kappa.
$$

\ms\no
Next, we show the contraction property of the map $ \Phi$ on $\mathbb{D}_{2,\kappa}$. To this end, given $Q,\wt{Q}\in\mathcal{P}_2(\Om)$, we use an estimate of the total variation distance $D_T(\Phi(Q),\Phi(\wt{Q}))$ in terms of the relative entropy $H(\Phi(Q)|\Phi({\wt Q)})$ between  $\Phi(Q)$ and $\Phi(\wt{Q})$ given by the celebrated Csisz{\'a}r-Kullback-Pinsker inequality:
\begin{equation}\label{CKP}
D^2_T(\Phi(Q),\Phi(\wt{Q}))\le 2H(\Phi(Q)|\Phi({\wt Q)}),
\end{equation}
where, 
$$
H(\Phi(Q)|\Phi({\wt Q)})=E_{\Phi(Q)}\Big[\log\frac{L^Q(T)}{L^{\wt Q}(T)}\Big].
$$
In view of \eqref{exp-mg}, we have
$$
\log\frac{L^Q(T)}{L^{\wt Q}(T)}=\sum_{i,j:\, i\neq j} \int_{(0,T]}\ln{\frac{ \l^{Q}_{ij}(t)}{\l^{\wt Q}_{ij}(t)}}dN_{ij}(t)-\int_0^T( \l^{Q}_{ij}(t)-\l^{\wt Q}_{ij}(t))I_i(t)dt.
$$
Taking expectation w.r.t. $\Phi(Q)$, using \eqref{mart-Q}, we obtain 
\begin{eqnarray}\label{entropy}
H(\Phi(Q)|\Phi({\wt Q)})=\sum_{i,j:\, i\neq j}E_{\Phi(Q)}\left[\int_0^T \left(\tau(\l^{Q}_{ij}(t))-\tau(\l^{\wt Q}_{ij}(t)) \right. \right. \\
 \left. \vphantom{\int_0^T}\left. -(\l^{Q}_{ij}(t)-\l^{\wt Q}_{ij}(t))\log\l^{\wt Q}_{ij}(t)\right)\,I_i(t)dt\right], \nonumber
\end{eqnarray}
where
$\tau(x):=x\log x -x+1,\,\, x>0$ is a convex function. We note that the r.h.s. of this last equality is non-negative, since, by convexity, we have
$$
\tau(x)\ge \tau(y)+(x-y)\tau^{\prime}(y),
$$
where $\tau^{\prime}(y)=\log y$. Using Taylor expansion we get
$$
\tau(x)=\tau(y)+(x-y)\tau^{\prime}(y)+\frac{1}{2}(x-y)^2\tau^{\prime\prime}(z),
$$
for some $z\in\{tx+(1-t)y:\,\,t\in (0,1)\}$, where $\tau^{\prime\prime}(z)=1/z$. Taking $x,y$ such that $ x,y\ge c_1>0$, as in (A2), we obtain
\begin{equation}\label{convex}
\tau(x)\le\tau(y)+(x-y)\tau^{\prime}(y)+\frac{1}{2c_1}(x-y)^2.
\end{equation}
Applying \eqref{convex} to the entropy \eqref{entropy}, we obtain
$$
H(\Phi(Q)|\Phi({\wt Q)})\le \frac{1}{2c_1}\sum_{i,j:\, i\neq j}E_{\Phi(Q)}\left[\int_0^T (\l^{Q}_{ij}(t)-\l^{\wt Q}_{ij}(t))^2 \,dt\right].
$$
Combining this inequality with \eqref{CKP}, we obtain

\begin{equation}\label{entropy-estimate}
D^2_T(\Phi(Q),\Phi(\wt{Q}))\le \frac{1}{c_1}\sum_{i,j:\, i\neq j}E_{\Phi(Q)}\left[\int_0^T (\l^{Q}_{ij}(t)-\l^{\wt Q}_{ij}(t))^2 \,dt\right].
\end{equation}
We may use (A5) to obtain
$$
\sum_{i,j:\, i\neq j}E_{\Phi(Q)}\left[(\l^{Q}_{ij}(t)-\l^{\wt Q}_{ij}(t))^2\right]\le C d^2(Q_t,{\wt Q}_t)\le C D_t^2(Q,{\wt Q}).
$$
Therefore,
$$
D^2_T(\Phi(Q),\Phi({\wt Q)})\le 2H(\Phi(Q)|\Phi({\wt Q)})\le \frac{C}{c_1}\int_0^T D_t^2(Q,{\wt Q}) \,dt.
$$
Iterating this inequality, we obtain, for every $N>0$,
$$
D^2_T(\Phi^N(Q),\Phi^N(\wt{Q}))\le C^N\int_0^T\frac{(T-t)^{N-1}}{(N-1)!}D^2_t(Q,\wt{Q})dt\le \frac{C^NT^N}{N!}D^2_T(Q,\wt{Q}),
$$
where $\Phi^N$ denotes the $N$-fold composition of the map $\Phi$. Hence, for $N$ large enough, $\Phi^N$ is a contraction which implies that $\Phi$ admits a unique fixed point.  

\ms\no Finally, using  \eqref{x-esti} with  $\Phi(\widehat Q)=\widehat Q$, noting that $ \int w_s^2\widehat Q(dw)\le \int |w|_s^2\widehat Q(dw)=E_{\widehat Q}[|x|^2_s]$,  we get
$$
E_{\widehat Q}[|x|^2_T]\le C\Big(1+\|\xi\|_2^2+2\int_{(0,T]}E_{\widehat Q}[|x|^2_s]\,ds\Big).
$$
Applying Gronwall's inequality we obtain the estimate
\begin{equation*}
\|\widehat Q\|_2^2\le Ce^{2CT}(1+\|\xi\|_2^2)<\infty.
\end{equation*}
\end{proof}
\begin{remark}
The mean-field Schl\"ogl model \eqref{schlogl} satisfies (A6) and (A7).
\end{remark}

\begin{corollary}
The mapping $t\mapsto P\circ x^{-1}(t)$ is continuous. More precisely, we have
\begin{equation}\label{marg-cont}
d(P_t,P_s)\le C(1+\|P\|_2)(t-s),\quad 0\le s\le t\le T.
\end{equation}

\begin{proof}
The inequality \eqref{marg-cont} follows by applying the above estimates to the martingale \eqref{f-mart-1} with $f(x)=I_{\{x\in A\}},\,\,\, A\subset I$, where we use the matrix $\l$ instead of $G$.
\end{proof}
\end{corollary}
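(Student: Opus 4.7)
The plan is to follow the author's hint and apply the martingale decomposition \eqref{f-mart-1} to the indicator function $f=\mathbf{1}_A$ for an arbitrary $A\subset I$, using the intensity matrix $\lambda^P$ (under which the coordinate process is the constructed mean-field chain) in place of the reference matrix $G$. For every such $A$ the process
$$
M^{\mathbf{1}_A}_t := \mathbf{1}_A(x(t)) - \mathbf{1}_A(x(0)) - \int_0^t (\lambda^P(u)\mathbf{1}_A)(x(u))\, du
$$
is a $P$-martingale; taking expectations at $0\le s\le t\le T$ and subtracting yields
$$
P_t(A) - P_s(A) \;=\; E\Big[\int_s^t (\lambda^P(u)\mathbf{1}_A)(x(u))\, du\Big].
$$

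The second step is a uniform-in-$A$ pointwise bound on the integrand. Since $\mathbf{1}_A$ is $\{0,1\}$-valued, $|\mathbf{1}_A(j)-\mathbf{1}_A(i)| \le 1 \le |j-i|$ whenever $i\neq j$, hence
$$
|(\lambda^P(u)\mathbf{1}_A)(x(u))| \;\le\; \sum_{j:\, j\neq x(u)} \lambda^P_{x(u)j}(u)\,|j-x(u)| \;\le\; \sum_{i,j:\, j\neq i} \lambda^P_{ij}(u)\,|j-i|,
$$
where in the last step we dropped the indicators $I_i(u)\in\{0,1\}$. Assumption (A3) with $p=1$ then produces
$$
|(\lambda^P(u)\mathbf{1}_A)(x(u))| \;\le\; C\Big(1+|x|_u + \int|y|\,P_u(dy)\Big),
$$
and Cauchy--Schwarz together with $|x(u)|\le|x|_T$ gives $\int|y|\,P_u(dy) = E[|x(u)|]\le \|P\|_2$. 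Inserting this bound into the identity above and invoking the second-moment control from Lemma~\ref{x-P-Q-esti} to handle $E[|x|_u]$ yields
$$
|P_t(A)-P_s(A)| \;\le\; C(1+\|P\|_2)(t-s)
$$
uniformly in $A\subset I$. The conclusion follows by taking the supremum over $A$ and appealing to \eqref{TV-B}, the factor $2$ being absorbed into $C$.

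I do not anticipate any real difficulty here: the argument is a direct consequence of (A3) with $p=1$, which is precisely tailored to give first-moment-linear control on the jump rates, and of the already-established second-moment bound on $|x|_T$. The only mild point is that the pointwise estimate must be independent of $A$ so that the supremum can be taken afterwards, which is automatic since neither the constant $C$ nor $\|P\|_2$ depends on $A$.
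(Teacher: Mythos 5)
Your argument is correct and is precisely the expansion of the one-line hint the paper gives: apply the martingale problem with intensity matrix $\lambda$ to the indicator $\mathbf{1}_A$, take expectations, bound the generator term uniformly in $A$ via $|\mathbf{1}_A(j)-\mathbf{1}_A(i)|\le 1\le |j-i|$ and (A3) with $p=1$, control the resulting first-moment term by $\|P\|_2$ through Cauchy--Schwarz, and take the supremum over $A$. No gap; this matches the paper's intended proof.
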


\subsection{Markov chain BSDEs}
An important consequence of Theorem \eqref{mart-rep} are solutions $(Y,Z)$ of Markov chain backward stochastic differential equations (BSDEs) defined on $(\Omega, \mathcal{F},\mathbb{F},P)$ by
\begin{equation}\label{Y-bsde1}
-dY(t)=f(t,\omega,Y(t^-),Z(t))dt-Z(t)dM(t),\quad Y(T)=\zeta.
\end{equation}
It is easily seen that if $(Y,Z)$ solves \eqref{Y-bsde1} then it admits the following representation:
\begin{equation*}
Y(t)=E\Big[\zeta+\int_t^T f(s,\omega,Y(s^-), Z(s))\,ds\Big|\mathcal{F}_t\Big], \quad t\in[0,T].
\end{equation*}
Moreover, $t\mapsto Y(t)$ is right-continuous with left limits. Therefore,  $Y(t^-)=Y(t)\,, dt$-a.e. Hence, we may write
\begin{equation*}
Y(t)=E\Big[\zeta+\int_t^T f(s,\omega,Y(s), Z(s))\,ds\Big|\mathcal{F}_t\Big], \quad t\in[0,T].
\end{equation*}

Existence and uniqueness results of solutions of Markov chain BDSEs \eqref{Y-bsde1} based on the martingale representation theorem ($L^2$-theory) have been recently studied in a series of papers by Cohen and Elliott (see e.g. \cite{Cohen2012} and the references therein). Their approach essentially adapts the method for solving Brownian motion driven BSDEs established first in \cite{pardoux}. Recently, Confortola {\it et al.} \cite{conf} derived existence and uniqueness results for more general classes of BSDEs driven by marked point processes under only $L^1$-integrability conditions. In this paper we use the $L^2$-theory as we want to use the martingale representation theorem in our optimal control problem. 
 
Below, we establish existence of an optimal control and a saddle-point for the zero-sum game using some properties of the following class of BSDEs which is a special case of \eqref{Y-bsde1}.
\begin{equation}\label{Y-bsde2}
-dY(t)=\phi(t,x,Z(t))dt-Z(t)dM(t),\quad Y(T)=\zeta,
\end{equation}
$(M_{ij})_{ij}$ is the $P$-martingale given in \eqref{mart-1} and the driver $\phi$  is essentially of the form 
\begin{equation}\label{drift}
\phi(t,x,p):=f(t,x)+\langle \ell(t,x),p\rangle_g,
\end{equation}
where $p:=(p_{ij},\, i,j \in I)$ is a real-valued matrix indexed by $I\times I$ and $\ell=(\ell_{ij},\, i,j \in I)$ is given by
\begin{equation}\label{L-bsde}
\ell_{ij}(s,x):=\left\{\begin{array}{rl}
\l_{ij}(s,x)/g_{ij}-1 &\text{if }\,\, i\neq j,\\ 0 & \text{if }\,\, i=j,
\end{array}
\right.
\end{equation}
where the predictable process $\l(t,x)=(\l_{ij}(t,x),\, i,j \in I)$ is the intensity matrix of the chain $x$ under a probability measure $\wt P$ on $(\Om,\F)$ given by a similar formula as \eqref{PQ}-\eqref{exp-mg}. In particular, as in \eqref{m-m-Q}, the  processes 
\begin{equation}\label{m-m-bsde}
\wt M_{ij}(t)=M_{ij}(t)-\int_{(0,t]} \ell_{ij}(s,x)I_i(s^-)g_{ij}ds
\end{equation}
are zero mean, square integrable and mutually orthogonal $\wt P$-martingales  whose  predictable quadratic variations are
\begin{equation}\label{mart-bsde}
\langle \wt M_{ij}\rangle_t=\int_{(0,t]} I_i(s^-) \l_{ij}(s,x)ds.
\end{equation} 
Moreover, $\phi$ satisfies a 'stochastic' Lipschitz condition. More precisely, we make the following  assumptions on the driver $\phi$ and the terminal value $\xi$.

\begin{enumerate}
\item[ (H1)] $P$-a.s., for all $(t,\omega)\in[0,T]\times\Omega,~ z^1=(z^1_{ij}),z_2 =(z^2_{ij}),\,\, z^1_{ij}, z^2_{ij}\in \R$,
$$
|\phi(t,\omega, z_1)-\phi(s,\omega,z_2)|\le a(t)\|z^1-z^2\|_{g}(t),
$$
(see Notation \ref{g-t}) where $(a(t))_{t}$ is a nonnegative and progressively measurable process which belongs to $L^2([0,T]\times \Omega,dt\otimes dP)$. 
\item[(H2)]  $\phi(t,\omega,0)$ is bounded.
\item[(H3)] $\zeta$ is an $\mathcal{F}_T$-measurable and bounded random variable.
\end{enumerate}

\ms\no
First we establish a comparison result for solutions of the BSDE \eqref{Y-bsde2}. This  result is a key argument in the proof of existence and uniqueness of solutions of our BSDE.
\begin{proposition}\label{bsde-comp}
 Let, for $i=1,2$, $(Y^i,Z^i)$ be the solutions of the BSDE \eqref{Y-bsde2} associated with $(\phi^i,\zeta^i)$  respectively, where $(\phi^i,\zeta^i)$ satisfy (H1) to (H3). Assume that
\begin{enumerate}
\item[(H4)] $\zeta^1\ge \zeta^2, \quad P$-a.s.;
\item[(H5)] for any $(t,\omega)\in[0,T]\times\Omega,\,$ $\phi^1(t,\omega,Z^2(t))\ge \phi^2(t,\omega, Z^2(t))  \quad P\text{-a.s.}$
 \end{enumerate}
 then, $Y^1\ge Y^2$ on $[0,T]\quad P$-a.s.
\end{proposition}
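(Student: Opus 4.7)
The plan is to pass to the BSDE satisfied by the difference $\bar Y := Y^1 - Y^2$, $\bar Z := Z^1 - Z^2$, and to remove the linear-in-$\bar Z$ part of the driver by a Girsanov change of measure, reducing the problem to a non-negative conditional expectation. Set $\bar\zeta := \zeta^1 - \zeta^2$, so $\bar\zeta \ge 0$ by (H4). Subtracting the two BSDEs \eqref{Y-bsde2} yields
$$
-d\bar Y(t) = \bigl[\phi^1(t, x, Z^1(t)) - \phi^2(t, x, Z^2(t))\bigr]\,dt - \bar Z(t)\,dM(t), \qquad \bar Y(T) = \bar\zeta.
$$
Exploiting the \emph{affine} dependence of the driver on $p$ given in \eqref{drift}, I add and subtract $\phi^1(t, x, Z^2(t))$ to obtain
$$
\phi^1(t, x, Z^1) - \phi^2(t, x, Z^2) = \langle \ell^1(t, x), \bar Z(t)\rangle_g + \bigl[\phi^1(t, x, Z^2) - \phi^2(t, x, Z^2)\bigr],
$$
where the bracketed difference is non-negative by (H5) and $\ell^1$ is the process associated with $\phi^1$ through \eqref{L-bsde}.

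Next I invoke the Girsanov change of measure \eqref{PQ}--\eqref{exp-mg} built from the intensity $\lambda^1$ that defines $\ell^1$: under the resulting probability $\wt P^{\,1}$ (equivalent to $P$), the processes $\wt M^{\,1}_{ij}(t) := M_{ij}(t) - \int_0^t \ell^1_{ij}(s, x)\, I_i(s^-)\, g_{ij}\,ds$ of \eqref{m-m-bsde} are orthogonal square-integrable martingales. Substituting $dM = d\wt M^{\,1} + \ell^1 I g\,ds$, the $\langle \ell^1, \bar Z\rangle_g\,dt$ terms cancel and the BSDE for $\bar Y$ reduces to
$$
-d\bar Y(t) = \bigl[\phi^1(t, x, Z^2(t)) - \phi^2(t, x, Z^2(t))\bigr]\,dt - \bar Z(t)\,d\wt M^{\,1}(t).
$$
Once $\int_0^\cdot \bar Z(s)\,d\wt M^{\,1}(s)$ is known to be a true $\wt P^{\,1}$-martingale, conditioning on $\F_t$ under $\wt P^{\,1}$ gives
$$
\bar Y(t) = E_{\wt P^{\,1}}\!\left[\,\bar\zeta + \int_t^T \bigl(\phi^1 - \phi^2\bigr)(s, x, Z^2(s))\,ds \,\Big|\, \F_t\right] \ge 0,
$$
and since $\wt P^{\,1} \sim P$ this is equivalent to $Y^1 \ge Y^2$, $P$-a.s.

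The main technical hurdle is establishing the martingale properties needed to carry out the change of measure: (i) that the Dol\'eans--Dade density $d\wt P^{\,1}/dP$ is a true $P$-martingale (so that $\wt P^{\,1}$ is indeed a probability), and (ii) that $\int_0^\cdot \bar Z\,d\wt M^{\,1}$ is a genuine $\wt P^{\,1}$-martingale rather than only a local one. Point (i) is exactly the analogue of Proposition \ref{L-Q-mart} and follows by the same truncation-and-uniform-integrability scheme, leveraging the lower bound $\inf_{i\neq j} g_{ij} > 0$ of \eqref{G2} together with the exponential moment estimate of Lemma \ref{exp-est}. For point (ii) I would combine an $L^2$-bound on $d\wt P^{\,1}/dP$ (obtainable under hypotheses in the spirit of (A6)--(A7) as in the proof of Theorem \ref{FP-tv}) with the a priori bound $E\bigl[\int_0^T \|Z^i\|^2_g\,ds\bigr] < \infty$ inherent to the BSDE solution space, transferring integrability from $P$ to $\wt P^{\,1}$ via H\"older's inequality; if that fails directly, a standard localization by stopping times $\tau_n \uparrow T$, together with the boundedness of $\bar Y$ inherited from (H2)--(H3), closes the argument by dominated convergence.
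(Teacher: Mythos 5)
Your proposal is correct and follows essentially the same route as the paper: take the difference $\bar Y=Y^1-Y^2$, use the affine form \eqref{drift} of the driver to write $\phi^1(\cdot,Z^1)-\phi^1(\cdot,Z^2)=\langle\ell^1,\bar Z\rangle_g$, absorb that term into the compensated martingale $\wt M$ of \eqref{m-m-bsde} via the Girsanov measure, and conclude by conditioning under the new (equivalent) measure using (H4)--(H5). The only difference is cosmetic: the paper drops the non-negative $(\phi^1-\phi^2)(\cdot,Z^2)$ term to get an inequality before conditioning, whereas you keep the full representation; your extra remarks on verifying the true-martingale properties address a point the paper leaves implicit.
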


\begin{proof}
Set
$$
\widehat Y:=Y^1-Y^2,\quad \widehat Z:=Z^1-Z^2,\quad \widehat\zeta:=\zeta^1-\zeta^2.
$$
Using (H4) and (H5), we obtain
$$
\begin{array}{lll}
\widehat Y_t=\widehat\zeta+\int_t^T (\phi^1(s,x,Z^1(s))-\phi^1(s,x,Z^2(s)))ds\\ \quad +\int_t^T (\phi^1(s,x,Z^2(s))-\phi^2(s,x,Z^2(s)))ds -\int_t^T\widehat Z(s)dM(s)\\ \quad \ge \int_t^T (\phi^1(s,x,Z^1(s))-\phi^1(s,x,Z^2(s)))ds-\int_t^T\widehat Z(s)dM(s) \quad P\text{-}\as
\end{array}
$$
Since
$$
\phi^1(s,x,Z^1(s))-\phi^1(s,x,Z^2(s))=\langle \ell(s,x),\widehat Z(s)\rangle_g,
$$
by \eqref{m-m-bsde} 
$$
\int_0^t\widehat Z(s)d\wt M(s)
=\int_0^t\widehat Z(s)dM(s)-\int_0^t\langle \ell(s,x),\widehat Z(s)\rangle_g ds
$$
is a zero-mean $\wt P$-martingale. Since $\wt P$ is absolutely continuous w.r.t. $P$, we also have
$$
\widehat Y_t\ge -\int_t^T\widehat Z(s)d\wt M(s) \quad \wt P\text{-a.s.}
$$
Hence, taking conditional expectation  w.r.t. $\F_t$, we obtain
$$
\widehat Y_t\ge -\wt E\left[\int_t^T\widehat Z(s)d\wt M(s)|\F_t \right]=0 \quad P\,\,\text{and}\,\, \wt P\text{-a.s.}
$$

\end{proof}

\begin{theorem}\label{bsde-th}
Let  $(\phi,\zeta)$ satisfies the assumptions (H1) to (H3). Then, the BSDE \eqref{Y-bsde2} associated with $(\phi,\zeta)$ admits a solution $(Y,Z)$ consisting of an adapted process $Y$ which is right-continuous with left limits and a predictable process $Z$ which satisfy  
\begin{equation*}
E\left[\sup_{t\in[0,T]}|Y(t)|^2+\int_{(0,T]} \|Z(s)\|^2_{g}ds\right]<+\infty.
\end{equation*}  
This solution is unique up to indistinguishability for $Y$ and for $Z$ equality  $dP\times g_{ij}(s^-)I_i(s^-)ds$-almost everywhere. 

\end{theorem}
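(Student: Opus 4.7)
The plan is to establish existence by Picard iteration in a suitably weighted $L^2$ space, in the spirit of Pardoux--Peng \cite{pardoux} but adapted to the Markov-chain martingale framework (as in Cohen--Elliott \cite{Cohen2012}), and to deduce uniqueness by the same weighted It\^o computation applied to the difference of two solutions. The weight is dictated by (H1): since the Lipschitz modulus $a(t)$ is only $L^2(dt\otimes dP)$-integrable rather than bounded, the standard unweighted norm fails to yield a contraction, so I would introduce the random clock $A(t):=\int_0^t a^2(s)\,ds$ and work in the Hilbert space $\mathcal{H}_\beta$ of pairs $(Y,Z)$, with $Y$ real-valued, $\mathbb{F}$-adapted, cadlag and $Z=(Z_{ij})$ predictable indexed by $I\times I$, endowed with the norm
\[
\|(Y,Z)\|_\beta^2:=E\!\int_0^T e^{\beta A(s)}\!\left(a^2(s)|Y(s)|^2+\|Z(s)\|_g^2\right)ds,
\]
for a parameter $\beta>0$ to be chosen later.

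For existence I would iterate Picard-style: starting from $(Y^0,Z^0)\equiv 0$, given $(Y^n,Z^n)\in\mathcal{H}_\beta$, define the square-integrable $P$-martingale $U^n(t):=E[\zeta+\int_0^T\phi(s,x,Z^n(s))\,ds\mid\F_t]$, which is well defined by (H2), (H3) and the induction hypothesis on $Z^n$. Proposition \ref{mart-rep} then produces a predictable process $Z^{n+1}$ with $E\!\int_0^T\|Z^{n+1}\|_g^2\,ds<\infty$ such that $U^n=U^n(0)+\int_0^\cdot Z^{n+1}\,dM$, and setting $Y^{n+1}(t):=U^n(t)-\int_0^t\phi(s,x,Z^n(s))\,ds$ yields an adapted cadlag process such that $(Y^{n+1},Z^{n+1})$ solves the BSDE with driver $\phi(\cdot,\cdot,Z^n(\cdot))$ and terminal $\zeta$. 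Applying It\^o to $e^{\beta A(t)}|\hat Y^n(t)|^2$ with $\hat Y^n:=Y^{n+1}-Y^n$, $\hat Z^n:=Z^{n+1}-Z^n$---using that the predictable compensator of $[\hat Y^n]$ equals $\|\hat Z^n\|_g^2\,dt$ by \eqref{compensator}, the Lipschitz bound $|\phi(\cdot,Z^n)-\phi(\cdot,Z^{n-1})|\le a\|\hat Z^{n-1}\|_g$ from (H1), and Young's inequality with some $\varepsilon<1$---delivers, for $\beta>1/\varepsilon$ large enough, the strict contraction $\|(\hat Y^n,\hat Z^n)\|_\beta^2\le\tfrac12\|(\hat Y^{n-1},\hat Z^{n-1})\|_\beta^2$. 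Completeness of $\mathcal{H}_\beta$ supplies a fixed point $(Y,Z)$ that solves \eqref{Y-bsde2}; Doob's inequality \eqref{Doob} applied to $U(t):=E[\zeta+\int_0^T\phi(s,x,Z(s))\,ds\mid\F_t]$ upgrades $Y$ to satisfy $E[\sup_{t\le T}|Y(t)|^2]<\infty$.

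The very same It\^o/Young computation, applied to the difference $(\hat Y,\hat Z):=(Y^1-Y^2,Z^1-Z^2)$ of two solutions and using $\hat Y(T)=0$, yields
\[
E\!\left[e^{\beta A(t)}|\hat Y(t)|^2\right]+\!\left(\beta-\tfrac{1}{\varepsilon}\right)\!E\!\int_t^T\!a^2e^{\beta A}|\hat Y|^2\,ds+(1-\varepsilon)E\!\int_t^T\!e^{\beta A}\|\hat Z\|_g^2\,ds\le 0
\]
for $\beta>1/\varepsilon$, which forces all three non-negative terms to vanish and so gives $\hat Y\equiv 0$ up to indistinguishability and $\|\hat Z\|_g\equiv 0$ in the $dP\otimes g_{ij}I_i\,ds$ sense, exactly the statement of uniqueness. (The $Y$-part of uniqueness can alternatively be read off directly from the comparison Proposition \ref{bsde-comp} applied with $(\phi,\zeta)$ on both sides, with the $Z$-part then recovered from the isometry \eqref{isometry}.)

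The main obstacle is balancing the three competing quantities that arise in the It\^o expansion---the $\beta a^2 e^{\beta A}|Y|^2$ term coming from $d e^{\beta A}$, the Lipschitz cross-term controlled through Young, and the $\|Z\|_g^2$ contribution from the predictable quadratic variation of the stochastic integral---to produce simultaneously a finite weighted norm for the Picard iterates and a strict contraction between consecutive differences. This is where the precise form of (H1) is decisive: the fact that the Lipschitz bound uses the very same norm $\|\cdot\|_g(t)$ that already appears in the predictable quadratic variation \eqref{compensator} of $\int Z\,dM$ is exactly what makes the $\|Z\|_g^2$ terms align on both sides of the inequality and explains why the driver in \eqref{drift} has been formulated in this intrinsic form rather than with a fixed Euclidean norm.
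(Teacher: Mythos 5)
Your route---a Picard iteration in the weighted space $\mathcal{H}_\beta$ built from $A(t)=\int_0^t a^2(s)\,ds$, in the style of the stochastic-Lipschitz BSDE theory of El Karoui--Huang---is genuinely different from the paper's, which (in sketch form) invokes Proposition \ref{bsde-comp} together with an approximation by an increasing sequence of standard Markov chain BSDEs in the spirit of Hamad\`ene--Lepeltier, passing to the limit by comparison and monotonicity rather than by a weighted contraction. The problem is that your argument needs more integrability than (H1)--(H3) supply. Since $a$ is only assumed to lie in $L^2(dt\otimes dP)$, the random variable $A(T)$ is merely in $L^1(P)$ and need not have exponential moments of any order, whereas your contraction forces $\beta$ to be large. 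If $E[e^{\beta A(T)}]=+\infty$, then: (i) even the first Picard iterate $(Y^1,Z^1)$---for which $Y^1$ is bounded by (H2)--(H3)---is not known to lie in $\mathcal{H}_\beta$, since the only available bound on $E\int_0^T a^2(s)e^{\beta A(s)}|Y^1(s)|^2ds$ is $\beta^{-1}\|Y^1\|_\infty^2\,E[e^{\beta A(T)}-1]$, which may be infinite; and (ii) the It\^o/Young inequality you use both for the contraction and for uniqueness compares quantities that may all be $+\infty$, so the step ``the sum of three non-negative terms is $\le 0$, hence each vanishes'' is not available. In particular, your uniqueness display is asserted for arbitrary solutions in the class $E[\sup_t|Y(t)|^2+\int_0^T\|Z(s)\|_g^2ds]<\infty$, for which there is no a priori reason that $E\int_t^T a^2e^{\beta A}|\hat Y|^2ds$ is finite. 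In the classical theory these steps are licensed by standing hypotheses of the type $E[e^{\beta A(T)}|\zeta|^2]<\infty$ and a weighted integrability of $\phi(\cdot,0)$, neither of which is among (H1)--(H3).

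The gap can be closed, but only by using the specific form \eqref{drift} of the driver rather than the abstract bound (H1): since $\phi(t,x,z)=f(t,x)+\langle\ell(t,x),z\rangle_g$ and the $z$-term is exactly the Girsanov correction in \eqref{m-m-bsde}, the equation under $\wt P$ reads $Y(t)=\zeta+\int_t^T f(s,x)\,ds-\int_t^T Z(s)\,d\wt M(s)$, so one can take $Y(t)=\wt E\,[\zeta+\int_t^T f(s,x)\,ds\mid\F_t]$ (bounded, by (H2)--(H3)) and obtain $Z$ from the martingale representation under $\wt P$; uniqueness in the stated class then follows from Proposition \ref{bsde-comp} applied in both directions---your own parenthetical remark, and essentially the paper's intended mechanism---whose proof uses only the change of measure, not a weighted contraction. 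Alternatively one follows the paper's approximation scheme with truncated (hence genuinely Lipschitz) coefficients. As written, however, the unqualified weighted-norm contraction does not go through under (H1)--(H3).
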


Using Proposition \ref{bsde-comp}, the proof of the theorem is similar to that of the Brownian motion driven BSDEs derived in \cite{Ham-Lepl95}, Theorem I-3, using an approximation scheme by an increasing sequence of standard Markov chain BSDEs for which existence, uniqueness and comparison results are similar to that of the Brownian motion driven BSDEs derived in \cite{pardoux} and \cite{EPQ}, along with the properties \eqref{isometry} and \eqref{Doob} related to the martingale $W$ displayed in \eqref{stoch} together with It\^o's formula for semimartingales driven by counting processes. We omit the details.


\section{Optimal control of jump processes of mean-field type}\label{control}

In this section we  perform a detailed study of the control problem using the total variation distance as a carrier of the topology of weak convergence. 

\ms Let $(U, \delta)$ be a compact metric space with its Borel field $\mathcal{B}(U)$ and $\U$ the set of $\Ff$-progressively measurable processes $u=(u(t),\,0\le t\le T)$ with values in $U$. We call $\U$ the set of admissible controls. In this section we consider a control problem of the jump process of mean-field type introduced above, where the control enters the jump intensities. 

\ms\no
For $u\in\U$, let $P^u$ be the probability measure on $(\Om,\F)$ under which the coordinate process $x$ is a jump process with intensities 
\begin{equation}\label{u-lambda}
\l_{ij}^u(t):=\l_{ij}(t,x,P^u\circ x^{-1}(t),u(t)),\,\,\, i,j\in I,\,\, 0\le t\le T,
\end{equation}
satisfying the following assumptions similar to (A1)-(A5). 
\begin{itemize}
 \item[(B1)] For any $u\in\U$, $i,j\in I$,  the process $((\l_{ij}(t, x,P^u\circ x^{-1}(t),u(t)))_t$ is predictable.
 
 \item[(B2)] There exists a positive constants $c_1$ such that for every $(t,i,j)\in[0,T]\times I\times I;\,i\neq j$, $w\in \Om,\, u\in U$ and  $\mu, \nu \in\mathcal{P}(I)$
 $$
\l_{ij}(t,w,\mu,u)\ge c_1>0.
 $$
 \item[(B3)] For $p=1,2$ and for every $t\in[0,T]$, $w\in \Om,\, u\in U$ and  $\mu\in \mathcal{P}_2(I)$, 
 $$
 \underset{i,j: \, j\neq i}\sum |j-i|^p\l_{ij}(t,w,\mu,u)\le C(1+|w|^p_t+\int|y|^p\mu(dy)).
 $$
 
 \item[(B4)]  For $p=1,2$ and for every $t\in[0,T]$, $w, \tilde w\in \Om$ and  $\mu, \nu \in\mathcal{P}(I)$,
  $$
  \underset{i,j: \, j\neq i}\sum |j-i|^p|\l_{ij}(t,w,\mu,u)-\l_{ij}(t,\tilde w,\nu,v)| \le C(|w-\tilde w|^p_t+d^p(\mu,\nu)+\delta^p(u,v)).
  $$
  \item[(B5)] For every $t\in[0,T]$, $w\in \Om,\, u\in U$ and  $\mu\in \mathcal{P}_1(I)$,
  $$
 \underset{i,j: \, j\neq i}\sum \l^2_{ij}(t,w,\mu)\le C(1+|w|_t+\int |y|\mu(dy)).
 $$
 \item [(B6)] There exists a constant $\a>0$ such that $\int e^{\a y}\xi(dy)<+\infty$. This condition implies that the probability measure $\xi$ on $I$ has finite second moment: $\|\xi\|_2^2:=\int |y|^2\xi(dy)<\infty$.
 
\end{itemize}

\ms\no Existence of $P^u$ such that $P^u\circ x^{-1}(0)=\xi$ is derived as a fixed point of $\Phi^u$ defined in the same way as in Theorem \ref{FP-tv} except that the intensities  $\l_{ij}(\cdot)$ further depend on $u$, which does not rise any major issues.  

\ms\no Let $P$  be the probability measure on $(\Omega, \mathcal F)$ under which $x$ is a time-homogeneous Markov chain such that $P\circ x^{-1}(0)=\xi$ and with $Q$-matrix $(g_{ij})_{ij}$  satisfying \eqref{G}, \eqref{exp-1} and \eqref{G2}.
We have
 \begin{equation}\label{P-u}
 dP^u:=L^u(T) dP,
 \end{equation}
where, for $0\le t\le T$,
\begin{equation}\label{P-u-density}
 L^u(t):=\underset{\substack{i,j\\ i\neq j} }\prod \exp{\left\{ \int_{(0,t]}\ln{\frac{ \l_{ij}^u(s)}{g_{ij}}}dN_{ij}(s)-\int_0^t( \l_{ij}^u(s)-g_{ij})I_i(s)ds  \right\}}, 
\end{equation}
 which satisfies
\begin{equation}\label{L-u-1}
L^u(t)=1+\int_{(0,t]} L^u(s^-)\underset{i,j:\, i\neq j}\sum I_i(s^-)\ell^u_{ij}(s)dM_{ij}(s),
\end{equation}
where $\ell^u_{ij}(s):=\ell_{ij}(t, x,P^u\circ x^{-1}(s),u(s))$ is given by the formula
\begin{equation}\label{L-u-2}
\ell^u_{ij}(s)=\left\{\begin{array}{rl}
\l_{ij}^u(s)/g_{ij}-1 &\text{if }\,\, i\neq j,\\ 0 & \text{if }\,\, i=j,
\end{array}
\right.
\end{equation}
and $(M_{ij})_{ij}$ is the $P$-martingale given in \eqref{mart-1}. Moreover, in a similar way as in \eqref{m-m-Q}, the accompanying martingale $M^u=(M^u_{ij})_{ij}$ satisfies
\begin{equation}\label{m-u-1}
M^u_{ij}(t)=M_{ij}(t)-\int_{(0,t]} \ell^u_{ij}(s)I_i(s^-)g_{ij}ds.
\end{equation}
The conditions (B5) and (B6) correspond to (A6) and (A7) of Theorem \eqref{FP-tv} and are imposed to guarantee that $L^u$ is $L^2(P)$-bounded.

\ms\no We first derive continuity of the map $u\mapsto P^u$ and then state the optimal control problem  we want to solve.

\ms\no
Let $E^u$  denote the expectation w.r.t. $P^u$. By \eqref{x-Q-bar}, we have, for every $u\in\U$,
\begin{equation}\label{u-x-estim}
\|P^u\|_2^2=E^u[|x|^2_T]\le Ce^{CT}(1+\|\xi\|_2^2)<\infty.
\end{equation}
 We further have the following estimate of the total variation between $P^u$ and $P^v$.
\begin{lemma}\label{TV-u}
For every $u,v\in \U$, it holds that
\begin{equation}\label{TV-uv-1}
D_T^2(P^u,P^v)\le C\sup_{0\le t\le T}E^u[\delta^2(u(t),v(t))].
\end{equation}
In particular, the function $u\mapsto P^u$ from $U$ into $\mathcal{P}_2(\Om)$ is Lipschitz continuous: for every $u,v\in U$, 
\begin{equation}\label{TV-uv-2}
D_T(P^u,P^v)\le C\delta(u,v).
\end{equation} 
Moreover, 
\begin{equation}\label{K}
K_T:=\sup_{u\in U}\|P^u\|_2\le C<\infty,
\end{equation}
for some constant $C>0$ that depends only on $T$ and $\xi$.
\end{lemma}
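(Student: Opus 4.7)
The plan is to establish the three assertions in order, with \eqref{TV-uv-1} carrying essentially all of the work. The strategy mirrors the contraction argument in the proof of Theorem~\ref{FP-tv}, except that rather than iterating the map $\Phi$, I compare directly the two probability measures $P^u$ and $P^v$ via their Girsanov densities $L^u(T)$ and $L^v(T)$ from \eqref{P-u-density}. The starting point is the Csisz\'ar--Kullback--Pinsker inequality applied to the restrictions of $P^u$ and $P^v$ to $\F_t$, which gives, for every $t\in[0,T]$,
$$
D_t^2(P^u, P^v) \;\le\; 2 E^u\Bigl[\log\frac{L^u(t)}{L^v(t)}\Bigr].
$$
Substituting the explicit form of the densities, writing $dN_{ij}(s) = dM^u_{ij}(s) + I_i(s^-)\l^u_{ij}(s)\,ds$ under $P^u$, and grouping terms exactly as in \eqref{entropy}, the right-hand side becomes
$$
2\sum_{i,j:\, i\neq j} E^u\Bigl[\int_0^t \bigl(\tau(\l^u_{ij}) - \tau(\l^v_{ij}) - (\l^u_{ij} - \l^v_{ij})\log \l^v_{ij}\bigr)\,I_i(s^-)\,ds\Bigr],
$$
with $\tau(x) = x\log x - x + 1$. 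The second-order Taylor expansion of $\tau$ together with the uniform lower bound $\l^{\cdot}_{ij} \ge c_1$ from (B2) then yields
$$
D_t^2(P^u, P^v) \;\le\; \frac{1}{c_1}\sum_{i,j:\, i\neq j} E^u\Bigl[\int_0^t \bigl(\l^u_{ij}(s)-\l^v_{ij}(s)\bigr)^2 I_i(s^-)\,ds\Bigr].
$$

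The main step is to bound the squared intensity differences in terms of $D_s(P^u, P^v)$ and $\delta(u(s), v(s))$. Assumption (B4) with $p=1$, evaluated at the fixed state $i = x(s^-)$, gives for every $j\neq i$
$$
|\l^u_{ij}(s) - \l^v_{ij}(s)| \;\le\; \frac{C}{|j-i|}\bigl(d(P^u_s, P^v_s) + \delta(u(s), v(s))\bigr).
$$
Squaring, summing over $j\neq i$ (the tail $\sum_{j\in I,\, j\neq i} 1/(j-i)^2 \le 2\sum_{k\ge 1} 1/k^2 = \pi^2/3$ is bounded uniformly in $i$) and using that $\sum_i I_i(s^-) = 1$, one obtains
$$
\sum_{i,j:\, i\neq j} \bigl(\l^u_{ij}(s) - \l^v_{ij}(s)\bigr)^2 I_i(s^-) \;\le\; C\bigl(d^2(P^u_s, P^v_s) + \delta^2(u(s), v(s))\bigr).
$$
Combining this with the inequality $d(P^u_s, P^v_s) \le D_s(P^u, P^v)$ from \eqref{margine} produces
$$
D_t^2(P^u, P^v) \;\le\; C\int_0^t D_s^2(P^u, P^v)\,ds + C\int_0^t E^u[\delta^2(u(s), v(s))]\,ds,
$$
and Gronwall's inequality then delivers \eqref{TV-uv-1}. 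The Lipschitz estimate \eqref{TV-uv-2} is an immediate specialization, since for constant controls $u,v\in U$ one has $\sup_s E^u[\delta^2(u(s), v(s))] = \delta^2(u,v)$. Finally, \eqref{K} is nothing but \eqref{u-x-estim}, whose right-hand side depends only on $T$ and $\xi$ and is therefore uniform in $u\in\U$.

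The principal obstacle is the passage from the Lipschitz hypothesis (B4), which controls the sum $\sum_{i,j}|j-i|^p|\l^u - \l^v|$, to a usable bound on $\sum_{i,j}(\l^u - \l^v)^2$. Since one cannot simply square (B4), my plan is to apply it with $p=1$ pointwise in $(i,j)$ and then recover summability through the convergent tail $\sum_k 1/k^2$. The indicator $I_i(s^-)$ arising from the CKP computation is essential here, as it collapses the otherwise potentially divergent sum over the infinite state space $I$ to the single current state $x(s^-)$; without it the argument would break down.
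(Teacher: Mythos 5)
Your proof is correct and follows essentially the same route as the paper: the Csisz\'ar--Kullback--Pinsker inequality applied to the Girsanov densities, the convexity/Taylor estimate for $\tau(x)=x\log x-x+1$ with the lower bound $c_1$ from (B2), the Lipschitz condition (B4) on the intensities, and Gronwall. You are in fact somewhat more careful than the paper at the one nontrivial point --- passing from the $\ell^1$-type bound in (B4) to the sum of squares via the pointwise bound and the convergent tail $\sum_k k^{-2}$ (the paper simply invokes the assumption), and in applying CKP on each $\F_t$ so that Gronwall is legitimately applicable --- but the argument is the same.
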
 
\begin{proof} A similar estimate as \eqref{entropy-estimate} yields
\begin{equation}\label{entropy-estimate-u}
D^2_T(P^u,P^v)\le \frac{1}{c_1}\underset{i,j:\, i\neq j}\sum \,E^u\left[\int_0^T (\l^{u}_{ij}(t)-\l^{v}_{ij}(t))^2 \,dt\right].
\end{equation}
Using (B3), we obtain 
\begin{equation*}
D^2_T(P^u,P^v)\le \frac{C}{c_1}E^u\left[\int_0^T d^2(P^u(t),P^v(t))+\delta^2(u(t),v(t))dt\right]
\end{equation*}
By \eqref{margine} and Gronwall inequality we finally obtain
\begin{equation*}
D^2_T(P^u,P^v)\le C \sup_{0\le t\le T}E^u[\delta^2(u(t),v(t))].
\end{equation*}

\ms\no Inequality (\ref{TV-uv-2}) follows from (\ref{TV-uv-1}) by letting  $u(t)\equiv u\in U$ and $v(t)\equiv v\in U$. It remains to show (\ref{K}). But, this follows from (\ref{u-x-estim}) and the continuity of the function $u\mapsto P^u$ from the compact set $U$ into $\mathcal{P}_2(\Om)$. 
\end{proof}

In the rest of the paper, we will assume that $x(0)$ is a given deterministic point in $I$ and $\F_0$ is the trivial $\s$-algebra.  
 
 Let $f$ be a measurable functions from $[0,T]\times\Om\times\mathcal{P}_2(I)\times U$ into $\R$ and $h$ be a measurable functions from $I\times\mathcal{P}_2(I)$ into $\R$ such that \\

\begin{itemize}
 \item[(B7)] For any $u\in\U$ and $Q\in \mathcal{P}_2(\Om)$, the process  $(f(t, x,Q\circ x^{-1}(t),u(t)))_t$ is progressively measurable. Moreover, $h(x(T),Q\circ x^{-1}(T))$ is $\mathcal{F}_T$-measurable.
 \item[(B8)] For every $t\in[0,T]$, $w\in\Om$, $u,v\in U$ and $\mu, \nu \in\mathcal{P}_2(I)$,
  $$
  |\phi(t,w,\mu, u)-\phi(t,w,\nu,v)|\le C(d(\mu,\nu)+\delta(u,v)).
  $$
  for $\phi\in\{f,h\}$. 
  
 \item[(B9)] $f$ and $h$ are uniformly bounded. 
 \end{itemize}
  
\ms\no The cost functional $J(u),\,\, u\in\U$ associated with the controlled the jump process through the intensities $\l_{ij}(t,u(t))$ is 
\begin{equation}\label{J-u}
J(u):=E^u\left[\int_0^T f(t,x,P^u\circ x^{-1}(t),u(t))dt+ h(x(T),P^u\circ x^{-1}(T))\right],
\end{equation}
where $f$ and $h$ satisfy (B7), (B8) and (B9) above.  \\

\noindent Any $\widehat u\in\U$ satisfying
\begin{equation}\label{opt-J}
J(\widehat u)=\min_{u\in\U}J(u)
\end{equation}
is called optimal control. The corresponding optimal dynamics is given by the probability measure $\widehat P$ on $(\Om,\F)$ defined by
\begin{equation}\label{opt-P}
d\widehat P=L^{\widehat u}(T)dP,
\end{equation}
where $L^{\widehat u}$ is given by the same expression as \eqref{P-u-density} and under which the coordinate process $x$ is a jump process with intensities 
$$
\l_{ij}^{\widehat u}(t):=\l_{ij}(t,x,P^{\widehat u}\circ x^{-1}(t),\widehat u(t)),\,\,\, i,j\in I,\,\, 0\le t\le T.
$$
\\
We want to prove existence of such an optimal control and characterize the optimal cost functional $J(\widehat{u})$.\\

\noindent For $(t,w,\mu,u)\in [0,T]\times\Om\times \mathcal{P}_2(I) \times U$ and a matrix $p=(p_{ij})$ on $I\times I$ with real-valued entries,  we introduce the Hamiltonian associated with the optimal control problem (\ref{J-u})
\begin{equation}\label{ham-u}
H(t,w,\mu,p,u):=f(t,w,\mu,u)+ \langle \ell (t,w,\mu,u),p\rangle_g,
\end{equation}
where
$$
\langle\ell (t,w,\mu,u),p\rangle_g:=\underset{i,j:\, i\neq j}\sum p_{ij}\ell_{ij}(t, w,\mu,u)g_{ij}\mathbf{1}_{\{w(t^-)=i\}},
$$
Recalling that $\ell_{ij}(t, w,\mu,u)g_{ij}=\l_{ij}(t, w,\mu,u)-g_{ij}$ for $i\neq j$, we have 
\begin{equation*}
\langle\ell (t,w,\mu,u)-\ell (t,w,\nu,v),p\rangle_g=\underset{i,j:\, i\neq j}\sum p_{ij}(\l_{ij}(t,w,\mu,u)-\l_{ij}(t,w,\nu,v))\mathbf{1}_{\{w(t^-)=i\}}.
\end{equation*}
Using (B3) we obtain 
$$
|\l_{ij}(t, w,\mu,u)-\l_{ij}(t, w,\nu,v)|\le C(d(\mu,\nu)+\delta(u,v)),\quad j\neq i.
$$
Therefore, provided that $\|p\|_g(t)<+\infty$ (see the notation \eqref{g-t}), 
\begin{equation*}\begin{array}{lll}
|\langle \ell (t,w,\mu,u)-\ell (t,w,\nu,v),p\rangle_g|\le \frac{1}{c_2}\|p\|_g(t)\|\l (t,w,\mu,u)-\l(t,w,\nu,v)\|_g \\ \qquad\qquad\qquad\qquad\qquad\qquad\qquad \le C\|p\|_g(t)(d(\mu,\nu)+\delta(u,v))(\underset{i,j:\, i\neq j}\sum g_{ij})^{1/2}.
\end{array}
\end{equation*}
where $\underset{i,j:\, i\neq j}\sum g_{ij}<+\infty$, by \eqref{G}.\\
Hence, in view of (B3), the Hamiltonian $H$ satisfies
\begin{equation}\label{H-u-lip}
|H(t,w,\mu,p,u)-H(t,w,\nu,p,v)|\le C(1+\|p\|_g(t))(d(\mu,\nu)+\delta(u,v)).
\end{equation}
Moreover, noting that for $i\neq j$, $|i-j|\ge 1$, we may use (B4) to obtain 
$$
\ell_{ij}(t, w, \mu,u)\le 1+\frac{\l_{ij}(t, w, \mu,u)}{g_{ij}}\le 1+\frac{C}{c_2}(1+|w|_t+\int |y|\mu(dy)),
$$
that is
$$
\ell_{ij}(t, w, \mu,u)\le C(1+|w|_t+\int |y|\mu(dy)), \quad i,j\in I,\,\ i\neq j.
$$
Therefore, 
\begin{equation}\label{H-u-p}
|H(t,w,\mu,p,u)-H(t,w,\mu,p^{\prime},u)|\le C\left(1+|w|_t+\int |y|\mu(dy)\right)\|p-p^{\prime}\|_g(t).
\end{equation}

Next, we show that the cost functional $J(u),\,u\in\U$, can be expressed by means of solutions of  a linear BSDE.
\begin{proposition}\label{u-bsde} For every $u\in\U$, the BSDE
\begin{eqnarray}\label{u-yz-bsde} \qquad\quad \left\{\begin{array}{ll}
-dY^u(t)=H(t,x,P^u\circ x^{-1}(t),Z^u(t),u(t)) dt-Z^u(t)dM(t),\\ \quad 
Y^u(T)=h(x(T),P^u\circ x^{-1}(T)),
\end{array}
\right.
\end{eqnarray} 
admits a solution $(Y,Z)$ which consists of an  $\Ff$-adapted process $Y$ which is right-continuous with left limits and a predictable process $Z$ which satisfy 
\begin{equation}\label{u-bsde-estim}
E\left[|Y^u|^2_T+\int_0^T\|Z^u(s)\|_{g}^2ds\right]<+\infty.
\end{equation}
This solution is unique up to indistinguishability for $Y$ and equality 
$dP\times g_{ij}I_i(s^-)ds$-almost everywhere for $Z$.

\ms\no Moreover, $Y^u_0=J(u)$.
\end{proposition}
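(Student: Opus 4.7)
The plan is to apply Theorem \ref{bsde-th} on $(\Om,\F,\Ff,P)$ to produce the pair $(Y^u,Z^u)$, and then to identify $Y^u_0$ with $J(u)$ through a Girsanov change of measure combined with an $L^\infty$-bound on $Y^u$. The first step is to check hypotheses (H1)--(H3) for the driver $\phi^u(t,\omega,z):=H(t,x,P^u\circ x^{-1}(t),z,u(t))$ and the terminal value $\zeta^u:=h(x(T),P^u\circ x^{-1}(T))$. Hypothesis (H1) follows from the stochastic Lipschitz estimate \eqref{H-u-p} with constant $a(t)=C(1+|x|_t+K_T)$, which belongs to $L^2(dt\otimes dP)$ by \eqref{x-P-esti} and \eqref{K}; (H2) and (H3) are immediate from the boundedness postulate (B9).

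Next, I substitute \eqref{m-u-1} into \eqref{u-yz-bsde}: the term $\langle \ell^u,Z^u\rangle_g$ inside $H$ cancels against the drift produced by rewriting $dM$ as $dM^u$ plus its $P^u$-compensator, yielding the pathwise identity
\begin{equation*}
Y^u(t)=h(x(T),P^u\circ x^{-1}(T))+\int_t^T f(s,x,P^u\circ x^{-1}(s),u(s))\,ds-\int_t^T Z^u(s)\,dM^u(s).
\end{equation*}

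To promote the $P^u$-local martingale $\int Z^u\,dM^u$ to a true martingale, I would first derive an a priori $L^\infty$-bound on $Y^u$ using Proposition \ref{bsde-comp}. The comparison pair would be $(\bar Y,0)$ with $\bar Y(t):=\|h\|_\infty+(T-t)\|f\|_\infty$, which solves the BSDE with driver $\bar\phi(t,z)=\|f\|_\infty+\langle \ell^u,z\rangle_g$ and terminal $\|h\|_\infty$; the admissibility of $\bar\phi$ in the sense of Theorem \ref{bsde-th} follows because (B5), the uniform bound $g_{ij}\ge c_2$ from \eqref{G2}, and Lemma \ref{exp-est} together guarantee $\|\ell^u\|_g\in L^2(dt\otimes dP)$. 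An analogous comparison against the pair associated with $-\|f\|_\infty$ and $-\|h\|_\infty$ delivers the matching lower bound, so $|Y^u(t)|\le C_0:=\|h\|_\infty+T\|f\|_\infty$.

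Finally, boundedness of $Y^u$ and of $f$ forces the process $M^\ast_t:=\int_0^t Z^u(s)\,dM^u(s)=Y^u(0)-Y^u(t)-\int_0^t f\,ds$ to be uniformly bounded, and a bounded local martingale is a genuine martingale, so $E^u[M^\ast_T]=0$. Taking $P^u$-expectation at $t=0$ in the rewritten BSDE then gives $Y^u_0=E^u[h(x(T),P^u\circ x^{-1}(T))+\int_0^T f(s,x,P^u\circ x^{-1}(s),u(s))\,ds]=J(u)$. The main obstacle is precisely this passage from local to true martingale under $P^u$: direct $L^2(P^u)$-control on $\|Z^u\|^2_{\lambda^u}$ is awkward because the change of measure only provides an $L^2(P)$-density for $L^u(T)$ while the intensities $\lambda^u_{ij}$ are not uniformly bounded, but the comparison-based $L^\infty$-bound on $Y^u$ circumvents this difficulty by making $M^\ast$ itself bounded.
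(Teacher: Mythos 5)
Your proposal is correct and follows essentially the same route as the paper: invoke Theorem \ref{bsde-th} after checking (H1)--(H3) via \eqref{H-u-p}, (B9) and the bound $K_T<\infty$, then rewrite the stochastic integral against $M$ as one against the $P^u$-martingale $M^u$ and take $P^u$-expectation. The one genuine difference is that the paper passes directly from the rewritten equation to $Y^u(t)=E^u[\,\cdot\,|\F_t]$, tacitly treating $\int Z^u\,dM^u$ as a true $P^u$-martingale, whereas you justify this step by first establishing the a priori bound $|Y^u(t)|\le \|h\|_\infty+T\|f\|_\infty$ through two applications of Proposition \ref{bsde-comp} (against the deterministic super- and subsolutions with the same $\ell^u$), so that $M^\ast_t=Y^u(0)-Y^u(t)-\int_0^t f\,ds$ is a bounded, hence genuine, local martingale. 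This correctly identifies and closes the integrability gap the paper leaves implicit --- the $L^2(P)$-control on $\|Z^u\|_g$ does not by itself give $L^1(P^u)$-control on the integral against $M^u$ --- at the modest cost of one extra comparison argument; your verification that the comparison is legitimate (same $\ell^u$ in both drivers, $\|\ell^u\|_g\in L^2(dt\otimes dP)$ via (B5) and \eqref{G2}) is sound.
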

\begin{proof} Since the Hamiltonian $H(t,w,\mu,p,u)$ is linear in $p$, by Theorem \eqref{bsde-th}, existence and uniqueness of solutions of the BSDE (\ref{u-yz-bsde}) satisfying (\ref{u-bsde-estim}) follows from \eqref{H-u-p}, the boundedness of $h(x(T),P^u\circ x^{-1}(T))$ and  the boundedness of $H(t,x,P^u\circ x^{-1}(t),0,u(t))$  which follows from  (B9). 

It remains to show that $Y^u_0=J(u)$. Indeed, in terms of the $(\Ff, P^u)$-martingale
\begin{equation*}
M^u_{ij}(t)=M_{ij}(t)-\int_{(0,t]} \ell_{ij}^u(s)I_i(s^-)g_{ij}ds.
\end{equation*}
the process $(Y^u,Z^u)$ satisfies, for $0\le t\le T$,
$$
Y^u(t)=h(x(T),P^u\circ x^{-1}(T))+\int_t^T f(s,x,P^u\circ x^{-1}(s),u(s)) ds-\int_t^T Z^u(s)dM^u(s).
$$
Therefore, $P^u$-a.s.,
$$
Y^u(t)=E^u\left[\int_t^T f(s,x,P^u\circ x^{-1}(s),u(s))ds+h(x(T),P^u\circ x^{-1}(T))\big|\F_t\right]
$$
In particular,
$$
Y^u_0=E^u\left[\int_0^T f(s,x,P^u\circ x^{-1}(s),u(s))dt+h(x(T),P^u\circ x^{-1}(T))\right]=J(u),
$$
since $\F_0$ is the trivial $\s$-algebra.  
\end{proof}

\subsection{Existence of an optimal control}
In the remaining part of this section we want to find $\widehat u\in\U$ such that $\widehat u=\arg\min_{u\in\U}J(u)$. A way to find such an optimal control is to proceed as in Proposition \ref{u-bsde} and introduce a linear BSDE whose solution $Y^*$ satisfies $Y^*_0=\inf_{u\in\U}J(u)$. Then, by comparison (cf. Proposition \ref{bsde-comp}), the problem can be reduced to  minimizing the corresponding Hamiltonian and the terminal value $h$ w.r.t. the control $u$.  

As explained in the introduction, since  the marginal law $P^u\circ x_t^{-1}$ of $x_t$ under $P^u$ depends on the whole path of $u$ over $[0,t]$ and not only on $u_t$,  we should minimize the Hamiltonian $H(t,x_.,P^u\circ x_t^{-1},z,u_t)$ w.r.t. the whole set $\U$ of admissible stochastic controls. Therefore, we should take the essential infimum of the Hamiltonian over $\U$, instead of the minimum over $U$. Therefore, for the associated BSDE to make sense, we should show that it exists and is progressively measurable. This is shown in the next proposition. 

 Let $\mathbb{L}$ denote the $\sigma$-algebra of progressively measurable sets on $[0,T]\times\Omega$. For  $z\in \R^{I\times I}$, the set of real-valued $I\times I$-matrix, set
\begin{equation}\label{def-H}
H(t,x_.,z,u):=H(t,x_.,P^u\circ x_t^{-1},z,u_t).
\end{equation}
Since $H$ is linear in $z$ and a progressively measurable process, it is an $\mathbb{L}\times B(\R^{I\times I})$-random variable.

\begin{proposition}\label{ess-inf}
There exists an $\mathbb{L}$-measurable process $H^*$ such that for every $z\in \R^{I\times I}$,
\begin{equation}\label{u-opt-1}
H^*(t,x_.,z)=\mathrm{ess}\inf_{u\in \U}H(t,x_.,z,u),\quad dP \times dt \mbox{-a.s.} 
\end{equation} 
Moreover, $H^*$ is Lipschitz continuous in $z$: For every $z,z^{\prime} \in\R^{I\times I}$,
\begin{equation}\label{H*-lip}
|H^*(t,x,z)-H^*(t,x,z^{\prime})|\le C(1+|x|_t+\sup_{u\in U}\|P^u\|_2)\|z-z^{\prime}\|_g(t).
\end{equation}

If $\theta$ is an $\mathbb{L}$-measurable process with values in $\R^{I\times I}$, then
\begin{equation}\label{u-opt-2}
H^*(t,x_.,\theta_t)=\underset{u\in\U}{\mathrm{ess}\inf\,}H(t,x_.,\theta_t,u),
\,\,dP \times
dt \mbox{-a.e.}
\end{equation}
\end{proposition}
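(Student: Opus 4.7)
The plan is to build $H^*$ in three passes: a pointwise-in-$z$ essential-infimum construction using the downward-directed lattice structure of $\{H(\cdot,\cdot,z,u):u\in\U\}$, a promotion to a jointly $\mathbb{L}\otimes\B(\R^{I\times I})$-measurable version that is Lipschitz in $z$, and a substitution identity for $\mathbb{L}$-measurable $\theta$ by simple-function approximation. The uniform Lipschitz constant provided by \eqref{H-u-p}, combined with the uniform bound $K_T$ from \eqref{K}, is the single ingredient that makes all three passes go through; the main technical obstacle is the measurability bookkeeping rather than any individual estimate.

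For each fixed $z\in\R^{I\times I}$, view $\{H(\cdot,\cdot,z,u):u\in\U\}$ as a family of $\mathbb{L}$-measurable functions on $([0,T]\times\Omega,dt\otimes dP)$. The family is stable under pairwise minimization: for $u^1,u^2\in\U$, setting $A:=\{H(\cdot,\cdot,z,u^1_\cdot)\le H(\cdot,\cdot,z,u^2_\cdot)\}\in\mathbb{L}$ and $u^3_t:=u^1_t\mathbf{1}_A+u^2_t\mathbf{1}_{A^c}$ gives $u^3\in\U$ with $H(\cdot,\cdot,z,u^3_\cdot)=H(\cdot,\cdot,z,u^1_\cdot)\wedge H(\cdot,\cdot,z,u^2_\cdot)$. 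The classical theorem on essential infima of downward-directed families (Neveu, Prop.~VI-1-1) then yields an $\mathbb{L}$-measurable process $H^*(\cdot,\cdot,z)$ and a sequence $u^{z,n}\in\U$ with $H(\cdot,\cdot,z,u^{z,n}_\cdot)\downarrow H^*(\cdot,\cdot,z)$ $dt\otimes dP$-a.e., establishing \eqref{u-opt-1} at this fixed $z$. Next, \eqref{H-u-p} combined with $\sup_{u\in\U}\|P^u\|_2\le K_T$ from \eqref{K} gives, uniformly in $u$,
$$|H(t,x,z,u)-H(t,x,z',u)|\le C(1+|x|_t+K_T)\|z-z'\|_g(t),$$
and taking essential infima transfers the bound to $H^*$ for each pair $(z,z')$ on a full-measure set. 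Fixing a countable dense $D\subset\R^{I\times I}$, collecting the countable union $N$ of exceptional null sets over pairs in $D\times D$, and extending $z\mapsto H^*(t,x,z)$ from $D$ to all of $\R^{I\times I}$ by uniform continuity on $N^c$, one obtains an $\mathbb{L}\otimes\B(\R^{I\times I})$-measurable version satisfying \eqref{H*-lip}. For $z\notin D$, picking $z_k\to z$ in $D$ and using the uniform-in-$u$ Lipschitz bound forces both $\mathrm{ess}\inf_{u}H(\cdot,\cdot,z_k,u_\cdot)\to\mathrm{ess}\inf_{u}H(\cdot,\cdot,z,u_\cdot)$ and $H^*(\cdot,\cdot,z_k)\to H^*(\cdot,\cdot,z)$, so \eqref{u-opt-1} persists at every $z$.

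For \eqref{u-opt-2}, write $\hat H:=\mathrm{ess}\inf_{u\in\U}H(\cdot,\cdot,\theta_\cdot,u_\cdot)$, which exists by the same lattice argument applied to the family $\{H(\cdot,\cdot,\theta_\cdot,u_\cdot):u\in\U\}$ (the pasting set $A$ remains in $\mathbb{L}$ after substituting $\theta_t$ for $z$). Treat first a simple $\theta=\sum_k z_k\mathbf{1}_{A_k}$ with disjoint $A_k\in\mathbb{L}$. For the inequality $\le$: from $H^*(\cdot,\cdot,z_k)\le H(\cdot,\cdot,z_k,u_\cdot)$ a.e. for each $u\in\U$, multiplying by $\mathbf{1}_{A_k}$ and summing yields $H^*(\cdot,\cdot,\theta_\cdot)\le H(\cdot,\cdot,\theta_\cdot,u_\cdot)$, whence $H^*(\cdot,\cdot,\theta_\cdot)\le\hat H$. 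For $\ge$: on $A_k$ the restriction $\hat H\mathbf{1}_{A_k}$ is a lower bound for $\{H(\cdot,\cdot,z_k,u_\cdot)\mathbf{1}_{A_k}:u\in\U\}$, hence $\hat H\mathbf{1}_{A_k}\le H^*(\cdot,\cdot,z_k)\mathbf{1}_{A_k}$, and summation gives $\hat H\le H^*(\cdot,\cdot,\theta_\cdot)$. For general $\mathbb{L}$-measurable $\theta$, approximate by simple $\theta^n\to\theta$ and pass to the limit using the uniform Lipschitz bound in both $H^*(\cdot,\cdot,\theta^n_\cdot)\to H^*(\cdot,\cdot,\theta_\cdot)$ and $\hat H_n\to\hat H$.
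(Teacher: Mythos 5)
Your overall route --- essential infimum pointwise on a countable dense grid of $z$'s, transfer of the uniform-in-$u$ Lipschitz bound \eqref{H-u-p} (with $K_T$ from \eqref{K}) to the infimum, extension by continuity off a single null set, and then substitution of $\theta$ via simple-function approximation --- is exactly the paper's appendix proof. But your very first step contains a claim that is false in this setting: the family $\{H(\cdot,\cdot,z,u):u\in\U\}$ is \emph{not} downward directed via control pasting. By \eqref{def-H}, $H(t,x_.,z,u)=H(t,x_.,P^u\circ x_t^{-1},z,u_t)$ depends on $u$ not only through the pointwise value $u_t(\omega)$ but also through the deterministic marginal law $P^u\circ x_t^{-1}$, which is determined (via the fixed point of Theorem \ref{FP-tv}) by the \emph{entire} control process up to time $t$ and does not depend on $\omega$ at all. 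For your pasted control $u^3=u^1\mathbf{1}_A+u^2\mathbf{1}_{A^c}$ one has, on $A$, $H(t,x_.,z,u^3)=H(t,x_.,P^{u^3}\circ x_t^{-1},z,u^1_t)$, and $P^{u^3}\circ x_t^{-1}$ is in general a third measure, different from both $P^{u^1}\circ x_t^{-1}$ and $P^{u^2}\circ x_t^{-1}$; so $H(\cdot,\cdot,z,u^3)\neq H(\cdot,\cdot,z,u^1)\wedge H(\cdot,\cdot,z,u^2)$. This is precisely the nonlocal obstruction the authors emphasize in the introduction and in Remark \ref{selection} as the reason a Bene\v{s}-type pointwise selection is unavailable here.

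The damage is contained, because the conclusion you draw from directedness is stronger than what you need. The existence of an $\mathbb{L}$-measurable essential infimum, together with a \emph{countable} subfamily $\mathcal{J}\subset\U$ whose pointwise infimum equals it a.e., holds for an arbitrary (non-directed) family of measurable functions; directedness is only required to upgrade that countable subfamily to a monotone decreasing sequence $u^{z,n}$, and you never use the monotone sequence afterwards. This is exactly how the paper proceeds (the sets $\mathcal{J}_n$ in \eqref{J-n-discrete}). So the fix is simply to delete the pasting argument and invoke the general essential-infimum theorem; the same remark applies to your definition of $\hat H$ in the proof of \eqref{u-opt-2}, where you again appeal to ``the same lattice argument.'' The remaining steps --- the two inequalities for simple $\theta$ and the limit passage using \eqref{H*-lip} --- are sound and coincide with the paper's argument (the paper additionally splits the limit passage into bounded and unbounded $\theta$, a minor refinement you omit).
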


\begin{proof}
The proof of \eqref{u-opt-1} and \eqref{u-opt-2} is similar to the one of Propositions 4.4 and 4.6 in \cite{DH}. We give it in the appendix  for the sake of completeness.  We prove the inequality \eqref{H*-lip}. We have
\begin{eqnarray*}\begin{array}{lll}
|H^*(t,x,z)-H^*(t,x,z^{\prime}) \\ \qquad\qquad\quad= \left|\underset{u\in \U}{\mathrm{ess}\inf\,}H(t,x,P^u\circ x^{-1}_t,z,u)-\underset{u\in\U}{\mathrm{ess}\inf\,}H(t,x,P^u\circ x^{-1}_t,z^{\prime},u)\right|\\ \qquad\qquad\quad\le \underset{u\in \U}{\mathrm{ess}\sup\,}\left|H(t,x,P^u\circ x^{-1}_t,z,u)-H(t,x,P^u\circ x^{-1}_t,z^{\prime},u)\right |\\ \qquad\qquad\quad\le C(1+|x|_t+ \sup_{u\in U}\|P^u\|_2)\|z-z^{\prime}\|_g(t),
\end{array}
\end{eqnarray*}
by \eqref{H-u-p}, where by the continuity of $u\mapsto P^u$,  $K_T:=\sup_{u\in U}\|P^u\|_2$ is finite. 
\end{proof}

\medskip Define the $\F_T$-measurable random variable
\begin{equation}\label{h-*}
h^*(x):=\underset{u\in\U}{\mathrm{ess}\inf\,} h(x(T),P^u\circ x^{-1}(T)),
\end{equation}
and let $(Y^*,Z^*)$ be the solution of the BSDE
\begin{equation}\label{opt-bsde}
\left\{\begin{array}{lll}
-dY^*(t)=H^*(t,x,Z^*(t))dt-Z^*(t)dM(t),\quad 0\le t<T,\\ Y^*_T=h^*(x),
\end{array}
\right.
\end{equation}

We have the following comparison result.
\begin{proposition}[Comparison result]\label{comp-bsde-u}
For every $t\in[0,T]$, it holds that
\begin{equation}
Y^*(t)\le Y^u(t),\quad P\mbox{-a.s.},\quad u\in\U.
\end{equation}
\end{proposition}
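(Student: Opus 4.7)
The plan is to reduce the claim to a direct application of the comparison Proposition \ref{bsde-comp}. Fix an arbitrary $u\in\U$ and choose in that proposition
\begin{equation*}
\phi^1(t,\omega,z):=H(t,x,P^u\circ x^{-1}(t),z,u(t)),\qquad \zeta^1:=h(x(T),P^u\circ x^{-1}(T)),
\end{equation*}
\begin{equation*}
\phi^2(t,\omega,z):=H^*(t,x,z),\qquad \zeta^2:=h^*(x),
\end{equation*}
so that $(Y^1,Z^1)=(Y^u,Z^u)$ from Proposition \ref{u-bsde} and $(Y^2,Z^2)=(Y^*,Z^*)$ solves \eqref{opt-bsde}.

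The hypotheses (H4) and (H5) would then be verified directly from the definitions. For (H4), the essential infimum in \eqref{h-*} gives $\zeta^2=h^*(x)\le h(x(T),P^u\circ x^{-1}(T))=\zeta^1$, $P$-a.s. For (H5), since $Z^*$ is predictable and thus $\mathbb{L}$-measurable, identity \eqref{u-opt-2} of Proposition \ref{ess-inf} yields
\begin{equation*}
\phi^2(t,\omega,Z^*(t))=H^*(t,x,Z^*(t))=\mathrm{ess}\inf_{v\in\U}H(t,x,Z^*(t),v)\le H(t,x,Z^*(t),u)=\phi^1(t,\omega,Z^*(t))
\end{equation*}
$dP\times dt$-a.e., which is exactly the inequality demanded by (H5).

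The main subtlety, and the expected obstacle, is that Proposition \ref{bsde-comp} is formally stated for drivers of the affine form \eqref{drift}, whereas $\phi^2=H^*$ is only concave in $z$ as an essential infimum of affine functions (albeit stochastically Lipschitz in view of \eqref{H*-lip}). Nevertheless, linearity is used in the proof of Proposition \ref{bsde-comp} \emph{only} on the $\phi^1$-side: after subtracting the two BSDEs and using (H5) to discard the non-negative middle term $\phi^1(Z^2)-\phi^2(Z^2)$, one rewrites $\phi^1(Z^1)-\phi^1(Z^2)=\langle\ell^u(s,x),\widehat Z(s)\rangle_g$ and absorbs this into the Girsanov transformation producing $P^u$, which is a genuine probability measure by Proposition \ref{L-Q-mart}. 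Taking $P^u$-conditional expectation with respect to $\F_t$ then delivers $\widehat Y_t=Y^u(t)-Y^*(t)\ge 0$; since $u\in\U$ was arbitrary, the conclusion follows. Well-posedness of the $Y^*$ equation needed for this argument is ensured by Theorem \ref{bsde-th} thanks to the Lipschitz estimate \eqref{H*-lip}, the moment bound \eqref{q-norm}, and the boundedness of $f$ and $h$ in (B9).
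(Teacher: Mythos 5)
Your proof is correct and takes essentially the same route as the paper: the paper's own proof of this proposition is precisely the argument of Proposition \ref{bsde-comp} rewritten in place — subtract the two BSDEs, discard the nonpositive terms coming from the definitions of $H^*$ and $h^*$ as essential infima, absorb the remaining affine difference $\langle \ell^u,\widehat Z\rangle_g$ into the $P^u$-martingale $M^u$, and take $P^u$-conditional expectation. Your observation that linearity is needed only on the $\phi^1$-side (so that the non-affine $H^*$ may sit in the $\phi^2$ slot) is exactly the point that makes the reduction legitimate, and is presumably why the paper writes the computation out directly rather than citing Proposition \ref{bsde-comp} verbatim.
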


\begin{proof} We have
$$
\begin{array}{lll}
Y^*(t)-Y^u(t)=h^*(x_.)-h(x(T),P^u\circ x_T^{-1})-\int_t^T (Z^*(s)-Z^u(s))dM(s)\\ +\int_t^T \{H^*(s,x_.,Z^*(s))-H(s,x,P^u\circ x^{-1}(s),Z^*(s),u(s))\} ds\\ +\int_t^T \{H(s,x,P^u\circ x^{-1}(s),Z^*(s),u(s)) -H(s,x,P^u\circ x^{-1}(s),Z^u(s),u(s))\} ds. 
\end{array}
$$
using the definition of $H^*$ and $h^*$ and noting that
$$
\begin{array}{lll}
H(s,x,P^u\circ x^{-1}(s),Z^*(s),u(s)) -H(s,x,P^u\circ x^{-1}(s),Z^u(s),u(s))\\ \qquad\qquad\qquad\qquad\qquad\qquad\qquad =\langle \ell(s,u(s)),Z^*(s)-Z^u(s)\rangle_g,
\end{array}
$$
we have
$$
\begin{array}{lll}
Y^*(t)-Y^u(t)\ge \int_t^T\langle \ell(s,u(s)),Z^*(s)-Z^u(s)\rangle_g ds-\int_t^T (Z^*(s)-Z^u(s))dM(s)\\ \qquad\qquad\qquad =\int_t^T (Z^*(s)-Z^u(s))dM^u(s),
\end{array}
$$
where $M^u$ is the $P^u$- martingale defined in \eqref{m-u-1}. Taking the $P^u$-conditional expectation w.r.t. $\F_t$, we obtain  $Y^*(t)\le Y^u(t),\, \forall u\in\U$.
\end{proof}

\begin{proposition}[$\ep$-optimality] Assume that for any $\ep>0$ there exists $u^{\ep}\in \U$ such that $P$-a.s.,
\begin{equation}\label{e-*-H-g}
\left\{ \begin{array}{ll} H^*(t,x_.,Z^*(t))\ge H(t,x_.,Z^*(t),P^{u^{\ep}}\circ x^{-1}(t),u^{\ep}(t))+\ep, \quad 0\le t<T, \\ h^*(x_.)\ge h(x(T),P^{u^{\ep}}\circ x^{-1}(T))+\ep.
\end{array}
\right.
\end{equation}
Then, 
\begin{equation}\label{*-e-opt}
Y^*(t)=\underset{u\in\U}{\mathrm{ess}\inf\,} Y^u(t),\quad 0\le t\le T.
\end{equation}
\end{proposition}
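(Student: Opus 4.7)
The inequality $Y^*(t)\le \underset{u\in\U}{\mathrm{ess}\inf\,} Y^u(t)$ is immediate from Proposition \ref{comp-bsde-u}, so the whole content lies in the reverse inequality $\underset{u\in\U}{\mathrm{ess}\inf\,} Y^u(t)\le Y^*(t)$. My plan is to use the hypothesis \eqref{e-*-H-g} to produce, for every $\ep>0$, a control $u^{\ep}\in\U$ for which $Y^{u^{\ep}}(t)\le Y^*(t)+C\ep$, and then let $\ep\downarrow 0$. Throughout I read \eqref{e-*-H-g} in its natural $\ep$-minimization sense, so that $H(t,x,Z^*(t),P^{u^\ep}\!\circ x^{-1}(t),u^\ep(t))\le H^*(t,x,Z^*(t))+\ep$ and $h(x(T),P^{u^\ep}\!\circ x^{-1}(T))\le h^*(x)+\ep$.

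Fix $\ep>0$ and set $\Delta Y:=Y^{u^\ep}-Y^*$, $\Delta Z:=Z^{u^\ep}-Z^*$. Combining the BSDE \eqref{u-yz-bsde} for $(Y^{u^\ep},Z^{u^\ep})$ with \eqref{opt-bsde} for $(Y^*,Z^*)$, integrating from $t$ to $T$ and using linearity of $H(t,w,\mu,\cdot,u)$ in $p$ via \eqref{ham-u}, I would split the driver as
\begin{align*}
&H\bigl(s,x,P^{u^\ep}\!\circ x^{-1}(s),Z^{u^\ep}(s),u^\ep(s)\bigr)-H^*\bigl(s,x,Z^*(s)\bigr)\\
&\qquad=\bigl\langle\ell\bigl(s,x,P^{u^\ep}\!\circ x^{-1}(s),u^\ep(s)\bigr),\Delta Z(s)\bigr\rangle_g\\
&\qquad\quad+\bigl[H\bigl(s,x,P^{u^\ep}\!\circ x^{-1}(s),Z^*(s),u^\ep(s)\bigr)-H^*\bigl(s,x,Z^*(s)\bigr)\bigr].
\end{align*}
By the $\ep$-minimization hypothesis, the second bracket is bounded above by $\ep$, and the terminal difference is bounded above by $\ep$ as well. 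Therefore
\begin{equation*}
\Delta Y(t)\le \ep(1+T-t)+\int_t^T\bigl\langle \ell(s,u^\ep(s)),\Delta Z(s)\bigr\rangle_g\,ds-\int_t^T\Delta Z(s)\,dM(s).
\end{equation*}

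Now I would absorb the linear $\Delta Z$ term by changing measure to $P^{u^\ep}$: by \eqref{m-u-1}, the process $M^{u^\ep}_{ij}(s)=M_{ij}(s)-\int_{(0,s]}\ell^{u^\ep}_{ij}(r)I_i(r^-)g_{ij}\,dr$ is a $P^{u^\ep}$-martingale, so the last two terms collapse to $-\int_t^T\Delta Z(s)\,dM^{u^\ep}(s)$, exactly as in the proof of Proposition \ref{bsde-comp}. Taking conditional $P^{u^\ep}$-expectation given $\F_t$ kills the stochastic integral, which is a true $P^{u^\ep}$-martingale thanks to \eqref{isometry}--\eqref{Doob} applied to $\Delta Z$ (whose $\|\cdot\|_g$-integrability is inherited from the a priori bounds on $(Y^{u^\ep},Z^{u^\ep})$ and $(Y^*,Z^*)$ provided by Theorem \ref{bsde-th} and Proposition \ref{u-bsde}); if needed I would localize by a sequence of stopping times and pass to the limit via Fatou. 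This yields
\begin{equation*}
Y^{u^\ep}(t)-Y^*(t)=\Delta Y(t)\le \ep(1+T),\qquad P^{u^\ep}\text{-a.s.}
\end{equation*}
and since $P^{u^\ep}\ll P\ll P^{u^\ep}$ by \eqref{P-u} together with (B2), also $P$-a.s.

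Consequently, $\underset{u\in\U}{\mathrm{ess}\inf\,} Y^u(t)\le Y^{u^\ep}(t)\le Y^*(t)+\ep(1+T)$ $P$-a.s., and letting $\ep\downarrow 0$ combined with the comparison inequality gives \eqref{*-e-opt}. The technical point I expect to require most care is the martingale property of $\int_t^T\Delta Z\,dM^{u^\ep}$ under $P^{u^\ep}$; the Girsanov density $L^{u^\ep}$ is $L^2(P)$-bounded by Theorem \ref{FP-tv}, so Cauchy--Schwarz translates the $P$-side square integrability of $\int\Delta Z\,dM$ into $P^{u^\ep}$-side integrability, but the intensity $\l^{u^\ep}$ appearing in the $P^{u^\ep}$-quadratic variation is only linearly bounded in $|x|_t$ by (B3)--(B5), so the estimate really uses the exponential moment \eqref{q-norm} together with (B6) to guarantee the required $L^2$ control.
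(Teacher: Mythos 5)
Your proof is correct and follows essentially the same route as the paper's: introduce the BSDE driven by $u^\ep$, bound the driver and terminal differences by $\ep$ using the hypothesis, absorb the linear $\Delta Z$ term via the $P^{u^\ep}$-martingale $M^{u^\ep}$ exactly as in Proposition \ref{comp-bsde-u}, and squeeze against the comparison inequality before letting $\ep\downarrow 0$. The only divergence is cosmetic: you read \eqref{e-*-H-g} with the (surely intended) sign $H(\cdot,u^\ep)\le H^*+\ep$, whereas the paper works with the inequalities as literally printed and obtains $Y^*(t)\ge Y^{u^\ep}(t)+\ep(T+1)$; the subsequent conclusion via Proposition \ref{comp-bsde-u} is identical in both versions.
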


\begin{proof} Let $(Y^{\ep},Z^{\ep})$ be the solution the following BSDE
$$
\begin{array}{lll}
Y^{\ep}(t)=h(x(T),P^{u^{\ep}}\circ x^{-1}(T))+\int_t^T H(s,x_.,Z^{\ep}(s),P^{u^{\varepsilon}}\circ x^{-1}(s),u^{\ep}(s))ds\\\qquad\qquad -\int_t^T Z^{\ep}(s)dM(s).
\end{array}
$$ 
We have
$$
\begin{array}{lll}
Y^*(t)-Y^{\ep}(t)=h^*(x_.)-h(x_T,P^{u^{\ep}}\circ x_T^{-1})-\int_t^T (Z^*(s)-Z^{\ep}(s))dM(s)\\ \,+\int_t^T \{H^*(s,x_.,Z^*)-H(t,x_.,P^{u^{\ep}}\circ x^{-1}(s),Z^*(s),u^{\ep}(s))\} ds\\ \,+\int_t^T \{H(t,x_.,P^{u^{\ep}}\circ x^{-1}(s),Z^*(s),u^{\ep}(s)) -H(s,x_.,P^{u^{\ep}}\circ x^{-1}(s),Z^{\ep}(s),u^{\ep}(s))\} ds. 
\end{array}
$$
Since 
$$
H^*(s,x_.,Z^*(s))-H(s,x_.,P^{u^{\ep}}\circ x^{-1}(s),Z^*(s),u^{\ep}(s))\ge \ep,
$$
and
$h^*(x_.)-h(x_T,P^{u^{\ep}}\circ x_T^{-1})\ge \ep $,  applying a similar argument as in the proof of Proposition \eqref{comp-bsde-u}, we obtain   $Y^*(t)\ge Y^{u^{\ep}}(t) +\ep(T+1)$. Therefore, for every $0\le t\le T$,  $Y^*(t)=\underset{u\in\U}{\mathrm{ess}\inf\,}Y^u(t)$ .
\end{proof}

In next theorem, we characterize the set of optimal controls associated with  (\ref{opt-J}) under the dynamics $P^u$.
\begin{theorem}[Existence of optimal control]\label{opt-BSDE} If there exists $u^*\in\U$ such that 
\begin{equation}\label{*-H-opt}
H^*(t,x,Z^*(t))=H(t,x,P^{u^*}\circ x^{-1}(t),Z^*(t),u^*(t)),\quad 0\le t< T,
\end{equation}
and
\begin{equation}\label{*-g-opt}
h^*(x)=h(x(T),P^{u^*}\circ x^{-1}(T)).
\end{equation}
Then, \begin{equation}\label{Y-opt}
Y^*(t)=Y^{u^*}(t)=\underset{u\in\U}{\mathrm{ess}\inf\,} Y^u(t),\quad 0\le t\le T.
\end{equation}
In particular, $Y_0^*=\inf_{u\in\U} J(u)=J(u^*)$.
\end{theorem}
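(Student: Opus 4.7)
The plan is to exploit uniqueness of solutions to the Markov chain BSDE together with the comparison result already established. The first step is to observe that, under the hypotheses \eqref{*-H-opt} and \eqref{*-g-opt}, the pair $(Y^*,Z^*)$ solving the BSDE \eqref{opt-bsde} with driver $H^*(t,x,z)$ and terminal condition $h^*(x)$ can be rewritten as a solution of
\begin{equation*}
-dY^*(t)=H(t,x,P^{u^*}\circ x^{-1}(t),Z^*(t),u^*(t))\,dt-Z^*(t)dM(t),\quad Y^*(T)=h(x(T),P^{u^*}\circ x^{-1}(T)).
\end{equation*}
This is exactly the BSDE \eqref{u-yz-bsde} whose unique solution, by Proposition \ref{u-bsde}, is $(Y^{u^*},Z^{u^*})$. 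Since the driver satisfies the stochastic Lipschitz condition (H1) through the bound \eqref{H-u-p}, the terminal value $h(x(T),P^{u^*}\circ x^{-1}(T))$ is bounded by (B9), and $H(\cdot,\cdot,\cdot,0,u^*(\cdot))$ is bounded, Theorem \ref{bsde-th} applies and gives uniqueness. Hence $Y^*(t)=Y^{u^*}(t)$ up to indistinguishability and $Z^*=Z^{u^*}$ in the appropriate $dP\times g_{ij}I_i(s^-)ds$-a.e. sense.

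Next I would combine this identification with the comparison result of Proposition \ref{comp-bsde-u}, which already asserts $Y^*(t)\le Y^u(t)$, $P$-a.s., for every $u\in\U$ and every $t\in[0,T]$. Together with $Y^*=Y^{u^*}$ this yields $Y^{u^*}(t)\le Y^u(t)$ for every $u\in\U$. Taking essential infimum over $u\in\U$ on the right-hand side and noting that $Y^{u^*}$ itself belongs to the family we are infimizing over, we conclude
\begin{equation*}
Y^*(t)=Y^{u^*}(t)=\underset{u\in\U}{\mathrm{ess}\inf\,}Y^u(t),\quad 0\le t\le T,
\end{equation*}
which is \eqref{Y-opt}.

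Finally, evaluating at $t=0$ and using that $\F_0$ is the trivial $\sigma$-algebra, the essential infimum reduces to an ordinary infimum of deterministic numbers. By the last assertion of Proposition \ref{u-bsde}, $Y^u_0=J(u)$ for every admissible $u$, so
\begin{equation*}
Y^*_0=Y^{u^*}_0=J(u^*)=\inf_{u\in\U}J(u),
\end{equation*}
which proves optimality of $u^*$ for the cost functional $J$ defined in \eqref{J-u}.

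I do not expect any serious obstacle: the whole argument is a verification, and the real content has already been placed in Theorem \ref{bsde-th} (existence/uniqueness for Markov chain BSDEs with stochastic Lipschitz driver), in Proposition \ref{comp-bsde-u} (comparison against the essential infimum BSDE), and in Proposition \ref{u-bsde} (identification $Y^u_0=J(u)$). The only delicate point worth a line is checking that the driver $H(t,x,P^{u^*}\circ x^{-1}(t),\cdot,u^*(t))$ indeed falls into the framework of Theorem \ref{bsde-th}; this follows from \eqref{H-u-p} together with the uniform bound \eqref{K} on $\|P^{u^*}\|_2$ and the moment estimate \eqref{q-norm} on $|x|_T$, so the stochastic Lipschitz coefficient $a(t)$ lies in $L^2([0,T]\times\Omega,dt\otimes dP)$.
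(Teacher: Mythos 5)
Your proposal is correct and follows essentially the same route as the paper: identify $(Y^*,Z^*)$ with $(Y^{u^*},Z^{u^*})$ via uniqueness/comparison for the BSDE \eqref{u-yz-bsde} under hypotheses \eqref{*-H-opt}--\eqref{*-g-opt}, then invoke the comparison $Y^*\le Y^u$ of Proposition \ref{comp-bsde-u} and the identification $Y^u_0=J(u)$ of Proposition \ref{u-bsde}. If anything, your argument is slightly more careful than the paper's two-line proof, since you derive the essential-infimum attainment directly from Proposition \ref{comp-bsde-u} rather than citing the conclusion of the $\ep$-optimality proposition.
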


\begin{proof} By comparison, the conditions \eqref{*-H-opt} and \eqref{*-g-opt} imply that $Y^*=Y^{u^*}$.  Due to \eqref{*-e-opt}, we arrive at \eqref{Y-opt}. 
\end{proof}

\begin{remark}\label{selection}
If the marginal law $P^u\circ x_s^{-1}$ of $x_s$ under $P^u$ is a function of $ (x,u(s))$ only and does not depend on the whole path of $u$ over $[0,s]$, it suffices to take the minimum of  $H$ and $h$  over the compact set of controls $U$, instead of taking the essential infimum over $\U$.  An optimal control over $[0,T]$ can be obtained by pasting the minima of $H$ and $h$ as follows.  By Bene\v{s} selection theorem \cite{Benes}, there exist  two measurable
functions $u_1^*$ from $[0,T)\times\Om\times\R^{I\times I}$ into $U$ and
$u_2^*$ from $I$ into $U$ such that
\begin{equation*}
H^*(t,x,z):=\inf_{u\in U}H(t,x,P^u\circ
x_t^{-1},z,u)= H(t,x,P^{u_1^*}\circ x_t^{-1},z,u_1^*(t,x,z))
\end{equation*}  and
\begin{equation*}
h^*(x):=\inf_{u\in U} h(x_T,P^u\circ
x_T^{-1})=h(x_T,P^{u_2^*}\circ x_T^{-1}).
\end{equation*}
Thus,  the progressively measurable function $u^*$ defined by
\begin{equation*}\label{u-opt-hat1}
\widehat u(t,x,z):=\left\{\begin{array}{ll} u_1^*(t,x,z), \quad
t<T,\\ u_2^*(x_T),\quad t=T,
\end{array}
\right.
\end{equation*}
satisfies
\begin{equation*}\label{opt}
H^*(t,x,z)=H(t,x,P^{\widehat u}\circ x_t^{-1},z,\widehat u)
\quad  \text{and} \quad   h^*(x)=h(x_T,P^{\widehat u}\circ
x_T^{-1}).\qed
\end{equation*}
\end{remark}

We end this section by providing an example where an optimal control exists. 
\begin{example}  Assume that  the set of $L^2$-bounded densities $\{ L^u_T, \,u\in \U\}$ is weakly sequentially compact for the topology $\sigma(L^1,L^{\infty})$.  Consider a cost functional of the form
\begin{equation*}
J(u)=E^u\left[\int_0^T f(t,x,E^u[\a(x(t))],u(t))dt+ h(x(T),E^u[\beta (x(T))])\right],
\end{equation*} 
where $\a, \beta, f, h$ are bounded functions and $(y,a)\in \R\times U\mapsto f(\cdot,\cdot,y,a)$ and $y\in\R \mapsto h(\cdot,y)$  are continuous. Then, an optimal control exists.  Indeed,  let $(u_n)_{n\geq 0}$ be a sequence in $\U$ such that 
$$
\inf_{u\in \U}J(u)=\lim_{n\rightarrow \infty}J(u_n). 
$$
By weak compactness of the set of densities $\{L^{u}_T, u\in \U\}$,  there exist $u^*\in \U$ and a subsequence $(L^{u_{n_k}}_T)_{ k\geq 0}$ which converges weakly to 
$L^{u^*}_T$. Since $\a, \beta$ are bounded we have, for any $t\leq T$,
$$
\begin{array}{lll}
\underset{k\to \infty}\lim E^{u_{n_k}}[\a(x(t))]= \underset{k\to \infty}\lim E[L^{u_{n_k}}_T\a(x(t))]=E[L^{u^*}_T\a(x(t))]=E^{u^*}[\a(x(t))] \\ \underset{k\to \infty}\lim E^{u_{n_k}}[\beta (x(T))]=\underset{k\to \infty}\lim  E[L^{u_{n_k}}_T\b(x(T))]= E[L_T^{u^*}\b(x(T))]=E^{u^*}[\b(x(T))].
\end{array}
$$
Using the boundedness and continuity of $f$ and  $h$, by the dominated convergence theorem, we obtain
\begin{equation*}\begin{array}{lll}
\underset{k\to \infty}\lim\int_0^Tf(s,x,E^{u^{n_k}}[\a(x(t))])ds+h(x(T),
E^{u_{n_k}}[\b(x(T))])\\ \qquad\qquad\qquad=\int_0^T f(s,x,E^{u^{*}}[\a(x(t))])ds+h(x(T),
E^{u{*}}[\b(x(T))])\end{array}
\end{equation*} in $L^p$, for any $p\geq 1$. Thus,  the weak convergence of $(L_T^{u_{n_k}})_{k\geq 0}$, we have that 
$\underset{k\to \infty}\lim J(u_{n_k})=J(u^*)$ which implies that $J(u^*)=\inf_{u\in \U}J(u),
$ i.e. $u^*$ is optimal for $J$. 

If the set of intensities $\lambda^u$ is mean-field free i.e. $\lambda^u(t):=\lambda(t,x,u(t))$ and for each $(t,w)\in [0,T]\times \Omega$,  $\lambda (t,y,U)$ is convex (Roxin's condition) i.e. for every $u_1,u_2\in\U$, and $a\in [0,1]$, there exists an admissible control $u\in \U$ such that
$$
\lambda(t,w,u(t,w))=a\lambda(t,w,u_1(t,w))+(1-a)\lambda(t,w,u_2(t,w)),
$$
then, mimicking the proof of Theorems 3 and 4 in \cite{Benes}, the set of densities $\{L^{u}_T, u\in \U\}$ is convex and weakly sequentially closed. Being $L^2$-bounded, it is weakly  sequentially compact. The proof of these results relies on the measurable selection theorem, which does not seem extend to intensities $\lambda^u$ of mean-field type which, at each time $t$, depend on the whole path of $u$ over $[0,t]$. 

\end{example}

\subsection{Existence of nearly-optimal controls}
The sufficient condition \eqref{*-H-opt}-\eqref{*-g-opt} is quite hard to verify in concrete situations. This makes Theorem
\eqref{opt-BSDE} less useful for showing existence of optimal controls. Nevertheless, near-optimal controls enjoy many useful and desirable properties that optimal controls do not have. Thanks to Ekeland's variational principle \cite{Ekeland}, that we will use below, under very mild conditions on the control set $\U$ and the payoff functional $J$, near-optimal controls always exist while optimal controls may not exist or are difficult to establish. 

We introduce the Ekeland metric $d_E$ on the space
$\U$ of admissible controls defined as follows. For $u, v\in \U$,
\begin{equation}\label{E-distance}
d_E(u,v):=\widehat P\{(\omega,t)\in\Omega\times[0,T],\,\,
\delta(u_t(\omega),v_t(\omega))>0\},
\end{equation}
where $\widehat P$ is the product measure of $P$ and the Lebesgue
measure on $[0,T]$.

In our proof of existence of near-optimal controls, we use $L^2$-boundedness of the Girsanov density $L^u$ , which, in view to Theorem \eqref{FP-tv}, is satisfied under assumptions (B6) and (B7). We have

\begin{lemma}\label{ekeland}
\begin{itemize}
\item[$(i)$] $d_E$ is a distance. Moreover, $(\U,d_E)$  is a complete metric space.
 \item[$(ii)$]  Let $(u^n)_n$ and $u$  be in $\U$. If $d_E(u^n,u)\to 0$ then $\E[\int_0^T\delta^2(u^n_t,u_t)dt]\to 0$.
\end{itemize}
\end{lemma}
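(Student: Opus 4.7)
The plan is to establish (i) via standard metric-space arguments (keeping careful track of the progressive measurability of the limit), and to deduce (ii) immediately from the compactness of $U$.

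For the metric properties in (i), symmetry is immediate from the symmetry of $\delta$, and $d_E(u,v)=0$ forces $\delta(u_t(\omega),v_t(\omega))=0$ $\widehat P$-a.e., hence $u=v$ in $\U$ (modulo $\widehat P$-null sets). The triangle inequality reduces to the elementary set inclusion
\begin{equation*}
\{(\omega,t):\delta(u_t,w_t)>0\}\subseteq \{\delta(u_t,v_t)>0\}\cup\{\delta(v_t,w_t)>0\},
\end{equation*}
which is valid because otherwise the triangle inequality for $\delta$ would give $\delta(u_t,w_t)=0$; taking the $\widehat P$-measure of both sides yields $d_E(u,w)\le d_E(u,v)+d_E(v,w)$.

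For completeness, I would take a Cauchy sequence $(u^n)\subset \U$ and extract a subsequence with $d_E(u^{n_k},u^{n_{k+1}})\le 2^{-k}$. Applying the Borel--Cantelli lemma to the events
\begin{equation*}
A_k:=\{(\omega,t)\in\Omega\times[0,T]:u^{n_k}_t(\omega)\ne u^{n_{k+1}}_t(\omega)\}
\end{equation*}
shows that, outside a $\widehat P$-null set $N$, the sequence $(u^{n_k}_t(\omega))$ is eventually constant. Setting $V_k:=\bigcap_{j\ge k}\{u^{n_j}=u^{n_k}\}$, I obtain an increasing family of progressively measurable sets (each $\{u^{n_j}=u^{n_k}\}$ is the pre-image of the closed diagonal in $U\times U$ under a progressive map, hence progressive) whose union $\widehat P$-a.e. covers $\Omega\times[0,T]$. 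Define $u$ to equal $u^{n_k}$ on $V_k\setminus V_{k-1}$ (with $V_0:=\emptyset$) and to equal a fixed element $u_0\in U$ on the residual $\widehat P$-null set; since $\Ff$ is $P$-complete, $u\in\U$. The Borel--Cantelli bound also yields
\begin{equation*}
d_E(u^{n_k},u)\le \widehat P\Big(\bigcup_{j\ge k}A_j\Big)\le \sum_{j\ge k}2^{-j}=2^{-k+1}\longrightarrow 0,
\end{equation*}
and combining this with the Cauchy property of $(u^n)$ gives $d_E(u^n,u)\to 0$.

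Part (ii) is then a one-line consequence of compactness: letting $D:=\mathrm{diam}(U,\delta)<\infty$, one has $\delta^2(u^n_t,u_t)\le D^2\,\mathbf{1}_{\{\delta(u^n_t,u_t)>0\}}$, so integrating against $\widehat P=P\otimes dt$ gives
\begin{equation*}
E\Big[\int_0^T\delta^2(u^n_t,u_t)\,dt\Big]\le D^2\, d_E(u^n,u)\longrightarrow 0.
\end{equation*}
The only technically delicate point is producing a genuinely progressively measurable version of the limit $u$ in the completeness step, which is why I explicitly dissected $\Omega\times[0,T]$ into the progressive sets $V_k\setminus V_{k-1}$ and used $P$-completeness of $\Ff$ to absorb the Borel--Cantelli exceptional set. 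Everything else is routine bookkeeping.
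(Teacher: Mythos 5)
Your proof is correct, and it is both more self-contained and in one place sharper than the paper's. For part $(i)$ the paper simply refers to \cite{Elliott} and remarks that completeness of $(\U,d_E)$ only needs completeness of $(U,\delta)$; you instead supply the full argument — the set-inclusion proof of the triangle inequality and the Borel--Cantelli construction of the limit of a rapidly Cauchy subsequence — and you rightly flag the only delicate point, namely producing a progressively measurable version of the limit, which you handle via the progressive sets $V_k\setminus V_{k-1}$ and the $P$-completeness of $\Ff$. For part $(ii)$ the paper argues that $d_E(u^n,u)\to 0$ gives $\delta(u^n_t,u_t)\to 0$ $dP\times dt$-a.e.\ and then invokes dominated convergence using the boundedness of $\delta$ on the compact set $U$; strictly speaking $d_E(u^n,u)\to 0$ only yields convergence in $\widehat P$-measure (a.e.\ convergence would require passing to a subsequence), though convergence in measure plus uniform boundedness still gives the conclusion. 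Your route avoids this issue entirely: the pointwise bound $\delta^2(u^n_t,u_t)\le D^2\,\mathbf{1}_{\{\delta(u^n_t,u_t)>0\}}$ with $D=\mathrm{diam}(U,\delta)$ integrates directly to the quantitative estimate $\E\bigl[\int_0^T\delta^2(u^n_t,u_t)\,dt\bigr]\le D^2\,d_E(u^n,u)$, which is cleaner and in fact strengthens the lemma to a Lipschitz-type bound. No gaps.
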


\begin{proof}
For a proof of $(i)$, see \cite{Elliott}. The proof of completeness of $(\U,d_E)$ needs only completeness of the metric space $(U,\delta)$.\\
$(ii)$ Let $(u^n)_n$ and $u$ be in $\U$. Then, by definition of the distance $d_E$,  since $d_E(u^n,u)\to 0$ then $\delta(u^n_t,u_t)$ converges to 0, $dP\times dt$-a.e. Now, since the set $U$ is compact, the sequence $\delta(u^n,u)$ is  bounded. Thus, by dominated convergence, we have $\E[\int_0^T\delta^2(u^n_t,u_t)dt]\to 0$.
\end{proof}

\begin{proposition}\label{D-conv}
Assume (B1) to (B9) hold and let $(u^n)_n$ and $u$ be in $\U$. If $d_E(u^n,u)\to 0$ then $D^2_T(P^{u^n},P^u)\to 0$.
Moreover, for every $t\in[0,T]$, $L^{u^n}_t$ converges to $L^{u}_t$ in $L^1(P)$.
\end{proposition}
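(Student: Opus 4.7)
My plan is to combine the Lipschitz-type bound on the total variation distance from Lemma~\ref{TV-u} with the $L^2(P)$-boundedness of the Girsanov density $L^u(T)$, and then to identify the total variation distance between $P^{u^n}$ and $P^u$ on $\F_t$ with the $L^1(P)$-norm of the density difference $L^{u^n}_t-L^u_t$. The only nontrivial step is transferring integrability of $\delta^2(u^n_t,u_t)$ with respect to $P$ (which Lemma~\ref{ekeland}(ii) provides) into integrability with respect to the moving measure $P^u$ that appears in the Lipschitz bound.

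First, I revisit the entropy--Gronwall step in the proof of Lemma~\ref{TV-u}: before bounding the time-integral by its supremum, it yields the tighter intermediate inequality
\[
D_T^2(P^{u^n},P^u)\le C\,E^u\Big[\int_0^T \delta^2(u^n_t,u_t)\,dt\Big],
\]
which in any case majorises \eqref{TV-uv-1} after multiplying by $T$. Writing $E^u[\,\cdot\,]=E[L^u(T)\,\cdot\,]$, using that $\delta$ is bounded on the compact metric space $(U,\delta)$ by some constant $M$, and applying the Cauchy--Schwarz inequality together with the elementary bound $\big(\int_0^T \delta^2(u^n_t,u_t)\,dt\big)^2\le M^2T\int_0^T \delta^2(u^n_t,u_t)\,dt$, I obtain
\[
E^u\Big[\int_0^T \delta^2(u^n_t,u_t)\,dt\Big]\le M\sqrt{T}\,\|L^u(T)\|_{L^2(P)}\,\Big(E\Big[\int_0^T \delta^2(u^n_t,u_t)\,dt\Big]\Big)^{1/2}.
\]
Under assumptions (B5)--(B6), Theorem~\ref{FP-tv} ensures that $L^u(T)$ is bounded in $L^2(P)$, while Lemma~\ref{ekeland}(ii) says that the last factor on the right tends to $0$ as $d_E(u^n,u)\to 0$. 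Combining these, $D_T^2(P^{u^n},P^u)\to 0$.

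For the second assertion, since $L^u$ is a $(\Ff,P)$-martingale, $L^u_t=E[L^u(T)\,|\,\F_t]$ is the density on $\F_t$ of $P^u$ with respect to $P$, and similarly for $L^{u^n}_t$. The standard identification of the total variation distance with the $L^1$-norm of the density difference, applied to the restricted $\sigma$-field $\F_t$, then gives
\[
D_t(P^{u^n},P^u)=E\big[|L^{u^n}_t-L^u_t|\big],\qquad 0\le t\le T.
\]
By the monotonicity \eqref{ordering} and the first part, $D_t(P^{u^n},P^u)\le D_T(P^{u^n},P^u)\to 0$, so $L^{u^n}_t\to L^u_t$ in $L^1(P)$ for every $t\in[0,T]$. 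The main obstacle in this plan is purely the passage from $P$-expectations to $P^u$-expectations in the Lipschitz bound, and it is resolved cleanly by the $L^2(P)$-integrability of $L^u(T)$ combined with the boundedness of $\delta$.
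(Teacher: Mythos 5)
Your proof is correct, and while it follows the same skeleton as the paper's (Ekeland convergence of $\delta^2(u^n,u)$, transfer of the expectation from $P$ to $P^u$, then the entropy--Gronwall estimate), both halves are executed by a genuinely different and somewhat cleaner mechanism. For the first assertion, the paper transfers $E[\int_0^T\delta^2(u^n_t,u_t)\,dt]\to 0$ into $E^u[\int_0^T\delta^2(u^n_t,u_t)\,dt]\to 0$ via convergence in probability plus uniform integrability of $L^u_T\int_0^T\delta^2\,dt$ (essentially dominated convergence, needing only $L^u_T\in L^1(P)$ and boundedness of $\delta$), and then invokes \eqref{TV-uv-1} — which, as you correctly observe, is the looser sup-in-$t$ form; the integral-form intermediate estimate you state is the one the Gronwall argument actually delivers and the one that matches the available convergence. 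Your Cauchy--Schwarz route accomplishes the same transfer at the cost of invoking the $L^2(P)$-bound on $L^u(T)$, which is indeed available under (B5)--(B6). For the second assertion the difference is more substantial: the paper passes through the Hellinger--total variation inequality to get convergence in probability of $L^{u^n}_t$ and then upgrades to $L^1(P)$ via the $L^2(P)$-boundedness (hence uniform integrability) of $(L^{u^n}_t)_n$, whereas you use the exact identity $D_t(P^{u^n},P^u)=E[|L^{u^n}_t-L^u_t|]$ for the densities on $\F_t$ (valid here since $L^u$ is a $P$-martingale, so $L^u_t=E[L^u_T\,|\,\F_t]$ is the density of $P^u|_{\F_t}$, and the paper's normalization $D_t=2\sup_{A\in\F_t}|\cdot|$ matches the full $L^1$-norm of the density difference). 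Your argument is shorter, avoids the uniform integrability step entirely, and gives the stated $L^1$-convergence as an immediate consequence of the first part together with the monotonicity \eqref{ordering}.
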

\begin{proof}
In view of  Lemma \eqref{ekeland}, we have $\E[\int_0^T\delta^2(u_t, u^n_t)dt] \to 0$. Therefore, the sequence $(\int_0^T
\delta^2(u_t, u^n_t)dt)_{n}$ converges in probability w.r.t $P$ to 0 and by compactness of $U$, it is uniformly bounded. On the other hand, since $L^u_T$ is integrable then the sequence $(L^u_T\int_0^T \delta^2(u_t, u^n_t)dt)_{n}$ converges also in probability (w.r.t. to $P$) to 0. Next,  by the uniform boundedness of $(\int_0^T\delta^2(u_t, u^n_t)dt)_{n}$, the sequence $(L^u_T\int_0^T\delta^2(u_t, u^n_t)dt)_{n}$ is uniformly integrable. Finally, since 
$$
\E^u[\int_0^T \delta^2(u_t, u^n_t)dt]=\E[L^u_T\int_0^T \delta^2(u_t,
u^n_t)dt],
$$ 
it follows that  $\E^u[\int_0^T \delta^2(u_t, u^n_t)dt]\rightarrow 0$ as $n\to +\infty$. To conclude it is enough to use the inequality \eqref{TV-uv-1}.

To prove that $L^{u^n}_t$ converges to $L^u_t$  in $L^1(P)$,  using the fact that  $(L^{u^n}_t)_n$ is $L^2(P)$-bounded (hence, uniformly integrable) it suffices to show that $L^{u^n}_t$ converges to $L^u_t$ in probability w.r.t. $P$, as $n\to +\infty$.  Using the following relationship between the Hellinger distance $\widehat{D}_t(P^{u^n},P^u)$
and the Total variation distance $D_t(P^{u^n},P^u)$:
$$
E\left[\left(\sqrt{L^{u^n}_t}-\sqrt{L^{u}_t}\right)^2\right]:=\widehat D_t^2(P^{u^n},P^u)\le 2D_t(P^{u^n},P^u),
$$
and  that $D_t(P^{u^n},P^u)$ tends to $0$, we obtain $\sqrt{L^{u^n}_t}$ converges to $\sqrt{L^{u}_t}$ in probability (P), as $n\to +\infty$. This  in turn yields that $L^{u^n}_t$ converges to $L^u_t$ in probability (P), as $n\to +\infty$.
\end{proof}
\begin{proposition}[Existence of near optimal control]\label{near-opt}
For any $\ep>0$, there exists a control $u^{\ep}\in\U$ such that

\begin{equation}\label{e-opt}
J(u^{\ep})\le \inf_{u\in\U} J(u)+\ep.
\end{equation}
$u^{\ep}$ is called  near or $\ep$-optimal for the payoff functional $J$.
\end{proposition}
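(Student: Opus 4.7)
The plan is to apply Ekeland's variational principle to the complete metric space $(\U, d_E)$ from Lemma \ref{ekeland}(i). Since (B9) gives uniform boundedness of $f$ and $h$, the functional $J$ is bounded (in particular bounded below), so $\inf_{u\in\U} J(u) \in \R$. The substantive work is to establish that $J$ is continuous on $(\U, d_E)$; once continuity is in hand, Ekeland's principle yields, for every $\ep > 0$, a control $u^\ep \in \U$ with $J(u^\ep) \le \inf_{u\in\U} J(u) + \ep$.

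To prove continuity, I would take a sequence $(u^n) \subset \U$ with $d_E(u^n, u) \to 0$ and write $J(u^n) - J(u) = E[L^{u^n}_T \Psi(u^n)] - E[L^{u}_T \Psi(u)]$, where $\Psi(v) := \int_0^T f(t,x,P^v\circ x^{-1}(t),v(t))\,dt + h(x(T), P^v\circ x^{-1}(T))$ is uniformly bounded by (B9). I would then split the difference as
\begin{equation*}
J(u^n) - J(u) = E\bigl[(L^{u^n}_T - L^u_T)\Psi(u^n)\bigr] + E\bigl[L^u_T (\Psi(u^n) - \Psi(u))\bigr].
\end{equation*}
The first summand tends to zero because $\Psi(u^n)$ is uniformly bounded in $n$ while $L^{u^n}_T \to L^u_T$ in $L^1(P)$ by Proposition \ref{D-conv}. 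For the second summand, the Lipschitz estimate (B8) gives $|\Psi(u^n) - \Psi(u)| \le C\int_0^T (d(P^{u^n}_t, P^u_t) + \delta(u^n_t, u_t))\,dt + C\, d(P^{u^n}_T, P^u_T)$, and after using \eqref{margine} this is bounded by $C(T+1)\,D_T(P^{u^n}, P^u) + C\int_0^T \delta(u^n_t, u_t)\,dt$.

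The main obstacle is the $E^u$-expectation of the $\delta$-integral, which involves controls rather than laws. Here one needs the $L^2(P)$-boundedness of $L^u_T$ supplied by Theorem \ref{FP-tv} (under (B5)--(B6), analogues of (A6)--(A7)): by Cauchy--Schwarz twice,
\begin{equation*}
E\Bigl[L^u_T \int_0^T \delta(u^n_t, u_t)\,dt\Bigr] \le T^{1/2}\, \bigl(E[(L^u_T)^2]\bigr)^{1/2}\Bigl(E\Bigl[\int_0^T \delta^2(u^n_t, u_t)\,dt\Bigr]\Bigr)^{1/2},
\end{equation*}
which tends to zero by Lemma \ref{ekeland}(ii). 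Combining with $D_T(P^{u^n}, P^u) \to 0$ from Proposition \ref{D-conv} gives continuity of $J$, completing the plan.
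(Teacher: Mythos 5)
Your proposal is correct and follows essentially the same route as the paper: Ekeland's variational principle on the complete metric space $(\U,d_E)$, boundedness of $J$ from (B9), and continuity of $J$ obtained by splitting the difference of products of the Girsanov density and the (uniformly bounded, Lipschitz) cost, with Proposition \ref{D-conv} supplying both $L^{u^n}_T\to L^u_T$ in $L^1(P)$ and $D_T(P^{u^n},P^u)\to 0$. The only cosmetic differences are that the paper keeps $L^u_t$ inside the time integral while you keep $L^u_T$ outside, and that you handle the $E[L^u_T\int_0^T\delta(u^n_t,u_t)\,dt]$ term via Cauchy--Schwarz and $L^2$-boundedness of the density where the paper leans on the uniform-integrability argument already carried out in Proposition \ref{D-conv}; both are valid.
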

\begin{proof} The result follows from Ekeland's variational principle, provided that we prove that the payoff function $J$, as a mapping from the complete metric space $(\U,d_E)$ to $\R$, is lower bounded and lower-semicontinuous. Since $f$ and $h$ are assumed uniformly bounded, $J$ is obviously bounded. We now show continuity of $J$: $J(u^n)$ converges to $J(u)$ when  $d_E(u^n,u)\to 0$.

Integrating by parts, we obtain
$$
J(u)=E[\int_0^T L^u_tf(t,x,P^u\circ x_t^{-1},u_t)dt+L^u_Th(x_T,
P^u\circ x_T^{-1})].
$$
Using the inequality
$$
|L^{u^n}_tf(t,x,u^n)-L^{u}_tf(t,x,u)|\le
|L^{u^n}_t-L^{u}_t|f(t,x_.,u)|+L^{u}_t|f(t,x,u^n)-f(t,x,u)|,
$$
where, we set  $f(t,x,u):=f(t,x,P^u\circ x_t^{-1},u_t)$, and (B8) together with the boundedness of $f$, by Proposition \eqref{D-conv}, $E[\int_0^T L^{u^n}_tf(t,x,u^n)dt]$ converges to $E[\int_0^T L^u_tf(t,x,u)dt],$ as $d_E(u^n,u)\to 0$. A similar argument yields convergence of $E[L^{u^n}_Th(x_T, P^{u^n}\circ x_T^{-1})]$ to $E[L^u_Th(x_T, P^u\circ x_T^{-1})]$ when $d_E(u^n,u)\to 0$.
\end{proof}


\section{The two-players zero-sum game problem}\label{zero-sum}
In this section we consider a two-players zero-sum game.  Let $\U$ (resp. $\V$) be the set of admissible $U$-valued (resp. $V$-valued) control strategies for the first (resp. second) player, where $(U,\delta_1)$ and $(V,\delta_2)$ are compact metric spaces.\\

For $(u,v),(\bar u,\bar v)\in U\times V$, we set
\begin{equation}\label{delta-u-v}
\delta((u,v),(\bar u,\bar v)):=\delta_1(u,\bar u)+\delta_2(v,\bar v).
\end{equation}
The distance $\delta$ defines a metric on the compact space $U\times V$. 

Let $P$  be the probability measure on $(\Omega, \mathcal F)$ under which $x$ is a time-homogeneous Markov chain such that $P\circ x^{-1}(0)=\xi$ and with $Q$-matrix $(g_{ij})_{ij}$  satisfying \eqref{G}, \eqref{exp-1} and \eqref{G2}.

 For $(u,v)\in\U\times \V$, let $P^{u,v}$ be the measure on $(\Om,\F)$ defined by
 \begin{equation}\label{P-u-v}
 dP^{u,v}:=L_T^{u,v}dP,
 \end{equation}
where
\begin{equation}\label{P-u-v-density}
 L^{u,v}(t):=\underset{\substack{i,j\\ i\neq j} }\prod \exp{\left\{ \int_{(0,t]}\ln{\frac{ \l^{u,v}_{ij}(s)}{g_{ij}}}dN_{ij}(s)-\int_0^t( \l^{u,v}_{ij}(s)-g_{ij})I_i(s)ds  \right\}}, 
\end{equation}

\begin{equation}\label{u-v-lambda}
\l^{u,v}_{ij}(t):=\l_{ij}(t,x,P^u\circ x^{-1}(t),u(t),v(t)),\,\,\, i,j\in I,\,\, 0\le t\le T,
\end{equation}
satisfying the following assumptions. 
\begin{itemize}
 \item[(C1)] For any $(u,v)\in\U\times \V$, $i,j\in I$, the process $((\l_{ij}(t, x,P^u\circ x^{-1}(t), u(t),v(t)))_t$ is predictable.
 
 \item[(C2)] There exists a positive constants $c_1$ such that for every $(t,i,j)\in[0,T]\times I\times I;\,i\neq j$, $w\in \Om,\, u\in U, v\in V$ and  $\mu, \nu \in\mathcal{P}(I)$
 $$
\l_{ij}(t,w,\mu,u,v)\ge c_1>0.
 $$
 
\item[(C3)] For $p=1,2$ and for every $t\in[0,T]$, $w\in \Om,\, u\in U, v\in V$ and  $\mu\in \mathcal{P}_2(I)$, 
 $$
 \underset{i,j: \, j\neq i}\sum |j-i|^p\l_{ij}(t,w,\mu,u,v)\le C(1+|w|^p_t+\int|y|^p\mu(dy)).
 $$
  \item[(C4)]  For $p=1,2$ and for every $t\in[0,T]$, $w, \tilde w\in \Om, (u,v), (\tilde u,\tilde v)\in U\times V$ and  $\mu, \nu \in\mathcal{P}(I)$,
  $$
  \begin{array}{lll}
  \underset{i,j: \, j\neq i}\sum |j-i|^p|\l_{ij}(t,w,\mu,u,v)-\l_{ij}(t,\tilde w,\nu,\tilde u,\tilde v)| \le C(|w-\tilde w|^p_t+d^p(\mu,\nu)\\ \qquad\qquad\qquad\qquad\qquad\qquad+\delta^p((u,v),(\tilde u,\tilde v)).
  \end{array}
  $$
  \item[(C5)] For every $t\in[0,T]$, $w\in \Om,\, u\in U, v\in V$ and  $\mu\in \mathcal{P}_1(I)$,
  $$
 \underset{i,j: \, j\neq i}\sum \l^2_{ij}(t,w,\mu, u,v)\le C(1+|w|_t+\int |y|\mu(dy)).
 $$
\item [(C6)] There exists a constant $\a>0$ such that $\int e^{\a y}\xi(dy)<+\infty$. 
 
 \end{itemize}

\ms\no By Proposition \eqref{L-Q-mart}, these assumptions guarantee that  $P^{u,v}$ is a probability measure on  $(\Omega,\F)$ under which the coordinate process $x$ is a chain with intensity matrix $\l^{u.v}$. Let $E^{u,v}$  denote the expectation w.r.t. $P^{u,v}$.\\

\ms\no Let $f$  be a measurable function from $[0,T]\times\Om\times\mathcal{P}_2(I)\times U\times V$ into $\R$ and $h$ be a measurable function from $I\times\mathcal{P}_2(I)$ into $\R$ such that \\

\begin{itemize}
 \item[(C7)] For every $t\in[0,T]$, $w\in\Om$, $(u,v),(\bar u,\bar v) \in U\times V$ and $\mu, \nu \in\mathcal{P}_2(I)$,
  $$
  |\phi(t,w,\mu, u,v)-\phi(t,w,\nu,\bar u,\bar v)|\le C(d(\mu,\nu)+\delta((u,v),(\bar u,\bar v)),
  $$
  for $\phi\in\{f,h\}$.
 \item[(C8)] For every $t\in[0,T]$, $w\in\Om$, $(u,v)\in U\times V$ and $\mu\in \mathcal{P}_2(I)$, 
 $$
 |f(t,w,\mu,u,v)|\le C(1+|w|_t+\int|y|\mu(dy)).
 $$
 \item[(C9)] $f$ and $h$ are uniformly  bounded.   
 \end{itemize}

\noindent The performance functional $J(u,v),\,(u,v)\in\U\times\V$, associated with the controlled Markov chain is 
\begin{equation}\label{J-u-v}
J(u,v):=E^{u,v}\left[\int_0^T f(t,x,P^{u,v}\circ x^{-1}(t),u(t),v(t))dt+ h(x(T),P^{u,v}\circ x^{-1}(T))\right].
\end{equation}
  \\
The zero-sum game we consider is between two players, where the first player (with control $u$) wants to minimize the payoff (\ref{J-u-v}), while  the second player (with control $v$) wants to maximize it.  The zero-sum game boils down to showing existence of a saddle-point for the game i.e.  to show existence of a pair $(\widehat u, \widehat v)$ of strategies such that  
\begin{equation}\label{J-u-v-hat}
J(\hat u, v) \le J(\widehat u, \widehat v)\le J(u,\widehat v)
\end{equation}
for each $(u, v)\in\U\times\V$.\\
 The corresponding optimal dynamics is given by the probability measure $\widehat P$ on $(\Om,\F)$ defined by
\begin{equation}\label{opt-P}
d\widehat P=L_T^{\widehat u, \widehat v}dP
\end{equation}
under which the chain has intensity $\l^{\widehat u, \widehat v}$.

\ms\noindent For $(t,w,\mu,u)\in [0,T]\times\Om\times\mathcal{P}_2(I)\times U\times V$ and  matrices $p=(p_{ij})$ with real-valued entries, we introduce the Hamiltonian associated with the optimal control problem (\ref{J-u-v})
\begin{equation}\label{ham-u-v}
H(t,w,\mu,p,u,v):=f(t,w,\mu,u,v)+ \langle \ell(t, w,\mu,u,v),p\rangle_g,
\end{equation}
where we recall that $\ell_{ij}(t, w,\mu,u,v)g_{ij}=\l_{ij}(t, w,\mu,u,v)-g_{ij}$ for $i\neq j$.

\ms\no In a similar way as for \eqref{H-u-lip} and \eqref{H-u-p}, whenever $\|p\|_g(t)$ and $\|p^{\prime}\|_g(t)$ are finite, the Hamiltonian $H$ satisfies
\begin{equation}\label{H-u-v-lip}
|H(t,w,\mu,p,u,v)-H(t,w,\nu,p,\bar u,\bar v)|\le C(1+\|p\|_g(t))(d(\mu,\nu)+\delta((u,\bar u),(v,\bar v)),
\end{equation}
and 
\begin{equation}\label{H-u-v-p}
|H(t,w,\mu,p,u,v)-H(t,w,\mu,p^{\prime},u,v)| \le C(1+|w|_t+\int |y|\mu(dy))\|p-p^{\prime}\|_g(t).
\end{equation}

Next, let $z\in R^{I\times I}$ and set 
\begin{itemize}
\item $\underline{H}(t,x,z):=\underset{v\in\V}{\mathrm{ess}\sup}\, \underset{u\in\U}{\mathrm{ess}\inf}\, H(t,x,z,u,v),$
\item $\overline{H}(t,x,z):=\underset{u\in\U}{\mathrm{ess}\inf}\, \underset{v\in\V}{\mathrm{ess}\sup}\, H(t,x,z,u,v),$
\item $\underline{h}(x):=\underset{v\in\V}{\mathrm{ess}\sup}\, \underset{u\in\U}{\mathrm{ess}\inf}\, h(x(T), P^{u,v}\circ x^{-1}(T))$,
\item $\overline{h}(x):=\underset{u\in\U}{\mathrm{ess}\inf}\, \underset{v\in\V}{\mathrm{ess}\sup}\, h(x(T), P^{u,v}\circ x^{-1}(T))$,
\item $(\underline{Y},\underline{Z})$ the solution of the BSDE associated with $(\underline{H}, \underline{h})$ and  $(\overline{Y},\overline{Z})$ the solution of the BSDE associated with $(\overline{H}, \overline{h})$.
\end{itemize}

Following a similar proof as the one leading to \eqref{H*-lip}, $\underline{H}(t,x_.,p)$ and $\overline{H}(t,x_.,p)$ are Lipschitz continuous in $p$ with the Lipschitz constant $C(1+|x|_t+\sup_{U\times V}\|P^{u,v}\|_2)$.

\begin{definition}[Isaacs' condition]
We say that the Isaacs' condition holds for the game if
$$
\left\{\begin{array}{lll}
\underline{H}(t,x,z)=\overline{H}(t,x,z),\quad 0\le t\le T, \\
\underline{h}(x)=\overline{h}(x)
\end{array}
\right.
$$
\end{definition}

Applying the comparison theorem for BSDEs, we obtain the following

\begin{proposition}\label{game-comparison} For every $t\in[0,T]$, it holds that $\underline{Y}_t\le \overline{Y}_t$, $\,P$-a.s.. Moreover, if the Issac's condition holds, then  
\begin{equation}\label{nash}
\underline{Y}(t)=\overline{Y}(t):=Y(t),\quad P\text{-a.s.},\quad 0\le t\le T.
\end{equation}
\end{proposition}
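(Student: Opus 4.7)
The plan is to deduce both assertions from the BSDE comparison principle and from the uniqueness statement in Theorem \ref{bsde-th}. I would first verify that both BSDEs for $(\underline Y,\underline Z)$ and $(\overline Y,\overline Z)$ are well-posed, then establish the pointwise order between their data, and finally conclude.

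The first task is to check that $\underline H,\overline H$ satisfy conditions of type (H1)--(H3) and that $\underline h,\overline h$ are bounded. Boundedness of the terminal values is immediate from (C9). The Lipschitz-in-$z$ property of $\underline H$ and $\overline H$ is inherited from \eqref{H-u-v-p} via the standard fact that an ess inf (or ess sup) of a family of $C(1+|x|_t+K_T)$-Lipschitz functions in $z$ is again $C(1+|x|_t+K_T)$-Lipschitz, where $K_T:=\sup_{(u,v)}\|P^{u,v}\|_2<\infty$ follows from Lemma \ref{TV-u} adapted to two-parameter intensities under (C1)--(C6); this reproduces exactly the Lipschitz estimate displayed for $\underline H$ and $\overline H$ just before the statement of Proposition \ref{game-comparison}, in analogy with \eqref{H*-lip}. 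Boundedness of $\underline H(t,x,0)$ and $\overline H(t,x,0)$ then comes from (C9) applied to $f$. Theorem \ref{bsde-th} therefore yields unique solutions $(\underline Y,\underline Z)$ and $(\overline Y,\overline Z)$.

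For the first inequality, I would invoke the elementary max-min bound $\mathrm{ess\,sup}_v\,\mathrm{ess\,inf}_u \le \mathrm{ess\,inf}_u\,\mathrm{ess\,sup}_v$, which gives $\underline H\le \overline H$ pointwise in $z$ and $\underline h\le \overline h$, $P$-a.s. I would then apply a comparison theorem for BSDEs. Proposition \ref{bsde-comp} is stated for drivers that are exactly linear in $z$, whereas $\underline H$ and $\overline H$ are only Lipschitz, so I would extend it by the classical linearization trick. Setting $\widehat Y=\overline Y-\underline Y$ and $\widehat Z=\overline Z-\underline Z$, I would write $\overline H(s,x,\overline Z)-\overline H(s,x,\underline Z) = \langle \Gamma(s),\widehat Z(s)\rangle_g$ for a bounded predictable process $\Gamma$ obtained by a coordinate-wise mean-value linearization, with $|\Gamma_{ij}|$ controlled by the Lipschitz constant $C(1+|x|_s+K_T)$. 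The resulting linear BSDE for $\widehat Y$ then has a non-negative source term (from $\overline H(\cdot,\underline Z)-\underline H(\cdot,\underline Z)\ge 0$) and non-negative terminal value $\overline h-\underline h\ge 0$; a Girsanov-type change of measure built from $\Gamma$, analogous to \eqref{m-u-1}, makes the stochastic integral $\int \widehat Z\,d\widetilde M$ a zero-mean martingale under the new measure, and conditioning on $\F_t$ yields $\widehat Y_t\ge 0$, i.e.\ $\underline Y_t\le \overline Y_t$, $P$-a.s.

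Under Isaacs' condition, the two BSDEs share the same driver and the same terminal value, so the uniqueness assertion of Theorem \ref{bsde-th} forces $\underline Y\equiv \overline Y$, proving \eqref{nash}. The main obstacle I expect is justifying that the linearization process $\Gamma$, whose bounds grow like $1+|x|_s+K_T$, defines a genuine change of probability measure; this requires $L^2$-boundedness of the associated exponential density, which should follow from the exponential moment bound on $|x|_T$ guaranteed by (C6) via Lemma \ref{exp-est} and the mechanism already used in the proof of Theorem \ref{FP-tv}.
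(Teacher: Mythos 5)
Your route --- order the data via $\mathrm{ess\,sup}\,\mathrm{ess\,inf}\le\mathrm{ess\,inf}\,\mathrm{ess\,sup}$, apply a BSDE comparison theorem, then invoke uniqueness under Isaacs' condition --- is exactly the paper's (the paper in fact offers nothing beyond the phrase ``applying the comparison theorem,'' so your write-up is the more careful one, and you are right that Proposition \ref{bsde-comp} as stated covers only drivers that are exactly of the form \eqref{drift} and so needs the linearization extension). One step in that extension should be sharpened: for BSDEs driven by the jump martingales $M_{ij}$, bounding $|\Gamma_{ij}|$ by the Lipschitz constant is \emph{not} sufficient to run the Girsanov argument --- comparison can genuinely fail for jump-driven BSDEs unless the linearization coefficient satisfies $1+\Gamma_{ij}\ge 0$, so that $g_{ij}(1+\Gamma_{ij})$ is a bona fide intensity and the associated Dol\'eans--Dade density is nonnegative. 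Here this sign condition does hold: $\overline H$ and $\underline H$ are essential extrema of the affine maps $z\mapsto f+\langle \ell^{u,v},z\rangle_g$ whose slopes satisfy $\ell^{u,v}_{ij}=\lambda^{u,v}_{ij}/g_{ij}-1\ge c_1/g_{ij}-1>-1$ by (C2), and the coordinate-wise incremental ratio of an $\mathrm{ess\,inf}$/$\mathrm{ess\,sup}$ of such affine maps stays in the interval between the essential infimum and supremum of the slopes, hence strictly above $-1$. With that observation added, your argument is complete; the $L^2$-bound on the exponential density that you invoke via (C6) and Lemma \ref{exp-est} then closes the martingale step, and the Isaacs part follows from uniqueness in Theorem \ref{bsde-th} exactly as you say.
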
  

In the next theorem, we formulate conditions for which the zero-sum game has a value.  For $(u,v)\in\U\times \V$, let $(Y^{u,v},Z^{u,v})$ be the solution of the BSDE 
\begin{equation}\label{u-v-yz-bsde}\left\{\begin{array}{lll}
-Y^{u,v}(t)=H(t,x,P^{u,v}\circ x^{-1}(t),Z^{u,v}(t),u(t),v(t)) dt-Z^{u,v}(t)dM(t),\,\, 0\le t<T,\\
Y^{u,v}(T)=g(x(T),P^{u,v}\circ x^{-1}(T)),
\end{array}
\right.
\end{equation} 

\begin{theorem}[Existence of a value of the game]\label{value-game}
Assume that, for every $0\le t<T$, 
$$
\underline{H}(t,x,Z(t))=\overline{H}(t,x,Z(t)).
$$
If there exists $(\widehat{u},\widehat{v})\in\U\times\V$ such that, for every $0\le t<T$, 
\begin{equation}\label{sp-H}
\underline{H}(t,x,Z(t))=\underset{u\in \U}{\mathrm{ess}\inf}\, H(t,x,Z(t),u,\widehat{v})=\underset{v\in \V}{\mathrm{ess}\sup}\, H(t,x,Z(t),\widehat{u},v),
\end{equation}
and
\begin{equation}\label{sp-g}
\underline{h}(x)=\overline{h}(x)=\underset{u\in \U}{\mathrm{ess}\inf}\, h(x(T),P^{u,\widehat{v}}\circ x^{-1}(T))=\underset{v\in \V}{\mathrm{ess}\sup}\, \, h(x(T),P^{\widehat{u},v}\circ x^{-1}(T)).
\end{equation}
Then, 
\begin{equation}\label{value}
Y(t)=\underset{u\in \U}{\mathrm{ess}\inf}\,\underset{v\in \V}{\mathrm{ess}\sup}Y^{u,v}(t)=\underset{v\in \V}{\mathrm{ess}\sup}\,\underset{u\in \U}{\mathrm{ess}\inf}\,Y^{u,v}(t), \quad 0\le t\le T.
\end{equation}
Moreover, the pair $(\widehat{u},\widehat{v})$ is a saddle-point for the game.
\end{theorem}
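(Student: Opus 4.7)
\medskip\noindent\textbf{Proof proposal.} The plan is to mimic the argument of Theorem~\ref{opt-BSDE} and Proposition~\ref{comp-bsde-u}, applying the BSDE comparison result (Proposition~\ref{bsde-comp}) twice---once against the upper envelope $\overline{Y}$ and once against the lower envelope $\underline{Y}$---to squeeze $Y^{\widehat{u},\widehat{v}}(t)$ between them. Since the Isaacs equality $\underline{H}=\overline{H}$ and $\underline{h}=\overline{h}$ is part of the hypotheses, Proposition~\ref{game-comparison} already delivers $\underline{Y}=\overline{Y}=:Y$, and by uniqueness of the BSDE solution also $\underline{Z}=\overline{Z}=:Z$, so the real task is to prove that $Y(t)=Y^{\widehat{u},\widehat{v}}(t)$.

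To obtain $\overline{Y}(t)\le Y^{u,\widehat{v}}(t)$ for every $u\in\U$, I would imitate the manipulation in Proposition~\ref{comp-bsde-u}: subtract the two BSDEs and split the driver difference into $\overline{H}(s,x,\overline{Z}(s))-H(s,x,P^{u,\widehat{v}}\!\circ x^{-1}(s),\overline{Z}(s),u(s),\widehat{v}(s))$ plus the linear-in-$Z$ remainder $\langle\ell^{u,\widehat{v}}(s),\overline{Z}(s)-Z^{u,\widehat{v}}(s)\rangle_g$, then absorb the latter into $dM$ via the analogue of \eqref{m-u-1} for the pair $(u,\widehat{v})$. Condition~\eqref{sp-H} forces the first bracket to be nonpositive, condition~\eqref{sp-g} does the same for the terminal difference $\overline{h}(x)-h(x(T),P^{u,\widehat{v}}\!\circ x^{-1}(T))$, and the residual stochastic integral is a $P^{u,\widehat{v}}$-martingale; taking the $P^{u,\widehat{v}}$-conditional expectation given $\F_t$ yields the bound. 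A mirror-image argument, interchanging $\mathrm{ess\,inf}$ and $\mathrm{ess\,sup}$ and comparing under $P^{\widehat{u},v}$, produces $\underline{Y}(t)\ge Y^{\widehat{u},v}(t)$ for every $v\in\V$. Specialising to $u=\widehat{u}$ and $v=\widehat{v}$ sandwiches $Y^{\widehat{u},\widehat{v}}(t)$ between $Y(t)$ and itself, so $Y=Y^{\widehat{u},\widehat{v}}$, and the two inequalities then read $Y^{\widehat{u},v}(t)\le Y(t)\le Y^{u,\widehat{v}}(t)$ for every $(u,v)\in\U\times\V$. Taking essential infimum in $u$ on the right, essential supremum in $v$ on the left, and inserting the elementary bound $\mathrm{ess\,sup}\,\mathrm{ess\,inf}\le\mathrm{ess\,inf}\,\mathrm{ess\,sup}$, all intermediate quantities collapse to $Y(t)$, giving~\eqref{value}. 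Evaluating at $t=0$ and using $Y^{u,v}_0=J(u,v)$ (the game analogue of Proposition~\ref{u-bsde}) delivers the saddle-point inequalities~\eqref{J-u-v-hat}.

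The point I expect to need most care is the legitimacy of each comparison step: one must verify that the stochastic integral featuring $\overline{Z}-Z^{u,\widehat{v}}$ is a genuine martingale under $P^{u,\widehat{v}}$ (and analogously with the roles swapped), which rests on the $L^2(P)$-boundedness of $L^{u,\widehat{v}}$ and $L^{\widehat{u},v}$ supplied by assumptions (C5)--(C6) together with the fixed-point machinery behind Theorem~\ref{FP-tv}, combined with the $L^2$-estimate on $(Y,Z)$ coming from Theorem~\ref{bsde-th}. With these integrability ingredients in hand, the argument becomes a direct two-sided analogue of the proof of Theorem~\ref{opt-BSDE}.
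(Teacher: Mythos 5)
Your proposal is correct, and it reaches the conclusion by a somewhat more direct route than the paper's own argument. The paper introduces two families of auxiliary BSDEs, $(\widehat Y^u,\widehat Z^u)$ with driver $\operatorname{ess\,sup}_{v\in\V}H(\cdot,u,v)$ and $(\widetilde Y^v,\widetilde Z^v)$ with driver $\operatorname{ess\,inf}_{u\in\U}H(\cdot,u,v)$, and routes the argument through the intermediate representation \eqref{*-*}, namely $\widehat Y^u=\operatorname{ess\,sup}_{v\in\V}Y^{u,v}$ and $\widetilde Y^v=\operatorname{ess\,inf}_{u\in\U}Y^{u,v}$ (stated tersely, "by uniqueness", but really requiring an $\epsilon$-optimality argument of the type preceding Theorem~\ref{opt-BSDE}), together with the comparison bounds $\widehat Y^u\ge Y\ge\widetilde Y^v$ and the identities $\widehat Y^{\widehat u}=Y=\widetilde Y^{\widehat v}$ forced by \eqref{sp-H}--\eqref{sp-g}. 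You bypass these auxiliary objects entirely: you compare $Y$ directly against $Y^{u,\widehat v}$ and against $Y^{\widehat u,v}$, exactly as in Proposition~\ref{comp-bsde-u}, using \eqref{sp-H} and \eqref{sp-g} to sign the driver and terminal differences and the change-of-measure trick to absorb the linear remainder, then specialize to $(\widehat u,\widehat v)$ to get $Y=Y^{\widehat u,\widehat v}$ and sandwich to obtain both the saddle-point inequalities and the ess\,inf--ess\,sup identities in \eqref{value}. What your route buys is that it avoids relying on the representation \eqref{*-*}; what the paper's route buys is that the upper- and lower-value BSDEs make the game-theoretic structure (the best responses in $v$ for fixed $u$ and vice versa) explicit. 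Both arguments ultimately rest on Proposition~\ref{bsde-comp}, the $P^{u,\widehat v}$- and $P^{\widehat u,v}$-martingale property of the residual stochastic integrals, and the $L^2$-boundedness of the densities guaranteed by (C5)--(C6), which you correctly flag as the technical point requiring verification.
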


\begin{proof} Let $(u,v)\in\U\times \V$ and $(\widehat Y^u,\widehat Z^u)$ and $(\widetilde Y^v,\widetilde Z^v)$ be the solution of the following BSDE 
\begin{equation}\label{u-v-yz-bsde}\left\{\begin{array}{ll}
-\widehat Y^{u}(t)=\underset{v\in \V}{\mathrm{ess}\sup}\,H(t,x,\widehat Z^{u}(t),u,v) dt-\widehat Z^{u}(t)dM(t),\quad 0\le t<T,\\
\widehat Y^{u}(T)=\underset{v\in \V}{\mathrm{ess}\sup}\,h(x(T),P^{u,v}\circ x^{-1}(T)),
\end{array}
\right.
\end{equation} 
\begin{equation}\label{u-v-yz-bsde}\left\{\begin{array}{ll}
-\widetilde Y^{v}(t)=\underset{u\in \U}{\mathrm{ess}\inf}\, H(t,x,\widetilde Z^{v}(t),u,v) dt-\widetilde Z^{v}(t)dM(t),\quad 0\le t<T,\\
\widetilde Y^{v}(T)=\underset{u\in \U}{\mathrm{ess}\inf}\,h(x(T),P^{u,v}\circ x^{-1}(T)).
\end{array}
\right.
\end{equation} 
By uniqueness of the solutions of the BSDEs, we have
\begin{equation}\label{*-*}
\widehat Y^{u^*}(t)=\underset{v\in \V}{\mathrm{ess}\sup}\, Y^{u^*,v}(t),\quad \widetilde Y^{v^*}(t)=\underset{u\in \U}{\mathrm{ess}\inf}\, Y^{u,v^*}(t),
\end{equation}
and, by comparison, we have
$$
 \widehat Y^u(t)\ge Y(t) \ge \underset{v\in \V}{\mathrm{ess}\sup}\, \widetilde Y^v(t).
$$
Therefore,
$$
\underset{u\in \U}{\mathrm{ess}\inf}\, \widehat Y^u(t)\ge Y(t)\ge \underset{v\in \V}{\mathrm{ess}\sup}\, \widehat Y^v(t).
$$
But, by \eqref{sp-H} and \eqref{sp-g},  in view of the uniqueness of the solutions of the BSDEs we have
$\widehat Y^{\widehat{u}}(t)=Y(t)=\widetilde Y^{\widehat{v}}(t)$. \\ Therefore, 
$$
Y^{\widehat{u}}(t)=\underset{u\in \U}{\mathrm{ess}\inf}\, \widehat Y^u(t)=Y(t)=\widetilde Y^{\widehat{v}}(t)=\underset{v\in \V}{\mathrm{ess}\sup}\,\widetilde Y^v(t)=Y^{\widehat{u},\widehat{v}}(t).
$$
Using \eqref{*-*}, we obtain
 $$
Y(t)=Y^{\widehat{u},\widehat{v}}(t)=\underset{v\in\V}{\mathrm{ess}\sup}\,Y^{\widehat{u}, v}(t)=\underset{u\in\U}{\mathrm{ess}\inf}\,Y^{u, \widehat{v}}(t).
$$
Therefore,
$$
Y^{u, \widehat{v}}(t)\le Y^{\widehat{u}, \widehat{v}}(t)\le Y^{\widehat{u}, v}(t).
$$
Thus, $Y^{\widehat{u},\widehat{v}}(t)$ is the value of the game and $(\widehat{u},\widehat{v})$ is a saddle-point.
\end{proof}

\begin{remark}
As mentioned in Remark \eqref{selection},  If the marginal law $P^{u,v}\circ x_s^{-1}$ of $x_s$ under $P^{u,v}$ is a function of $(u(s),v(s))$ only and does not depend on the whole path of $(u,v)$ over $[0,s]$, it suffices to take the minimum and the maximum resp. of  $H$ and $h$  over the compact set  $U$ and $V$, resp.,  instead of taking the essential infimum over $\U$ and the essential maximum over $\V$. By the measurable selection theorem (see e.g. \cite{Benes}), a saddle-point  over $[0,T]$ can be obtained by pasting the saddle-points of $H$ and $h$. 

It is possible to characterize the optimal controls $\hat u$ and the equilibrium points $(\hat u,\hat v)$ in terms of a stochastic maximum principle. This approach will be discussed in a future work.
\end{remark}

\section{Appendix}

\subsection*{Proof of \eqref{u-opt-1} in Proposition \eqref{ess-inf}}

For $n\ge 0$ let $z_n\in \mathbb{Q}^{I\times I}$, the $I\times I$-matrix with rational entries. Then, since $(t,\omega)\mapsto H(t,\omega,z_n,u)$ is $\mathbb{L}$-measurable, its essential infimum w.r.t. $u\in\U$ is well defined i.e. there exists an $\mathbb{L}$-measurable r.v. $H^n$ such that
\begin{equation}\label{H-n}
H^n(t,x, z_n)=\underset{u\in \U}{\mathrm{ess}\inf\,} H(t,x,z_n,u),\quad dP \times dt\mbox{-a.s.}
\end{equation}
Moreover, there exists a set $\mathcal{J}_n$ of $\U$ such that $(t,\omega)\mapsto \underset{u\in \mathcal{J}_n}{\inf\,} H(t,\omega,z_n,u)$ is $\mathbb{L}$-measurable and 
\begin{equation}\label{J-n-discrete}
H^n(t,x,z_n)=\underset{u\in \mathcal{J}_n}{\inf\,} H(t,x,z_n,u), \quad dP \times dt\mbox{-a.s.}
\end{equation}
Next,  set $N=\bigcup_{n\ge 0} N_n$, where
$$
N_n:=\{(t,\omega):\,\,H^n(t,\omega)\neq\underset{u\in \mathcal{J}_n}{\inf\,} H(t,\omega,z_n,u)\}.
$$
Then, $dP\otimes dt(N)=0$. \\ 

\noindent We define $H^*$ as follows: For $(t,\omega)\in N^c$  (the complement of $N$),
\begin{equation}\label{ess-inf-J}
H^*(t,x,z)=\left\{\begin{array}{ll}
\underset{u\in \mathcal{J}_n}{\inf\,} H(t,x,z_n,u) & \text{if }\,\, z=z_n\in\mathbb{Q}^{I\times I}, \\ \underset{z_n\to z}{\lim\,\,} \inf_{u\in \mathcal{J}_n} H(t,x,z_n,u) & \text{otherwise }.
\end{array}
\right.
\end{equation}
The last limit exists due to the fact that, for $n\neq m$, we have
$$
\begin{array}{ll}
|\underset{u\in \mathcal{J}_n}{\inf\,} H(t,x,z_n,u) -\underset{u\in \mathcal{J}_m}{\inf\,} H(t,x,z_m,u)|
=|H^*(t,x,z_n)-H^*(t,x,z_m)| \\ \quad \le \underset{u\in \U}{\mathrm{ess}\sup\,}\left|H(t,x,P^u\circ x^{-1}_t,z_n,u)-H(t,x,P^u\circ x^{-1}_t,z_m^,u)\right |\\ \qquad\qquad\le C(1+|x|_t+ \sup_{u\in U}\|P^u\|_2)\|z_n-z_m\|_g(t).
\end{array}
$$

\noindent  We now  show that, for every $z\in\R^{I\times I}$,
\begin{equation}\label{H-a}
H^*(t,x,z)=\underset{u\in \U}{\mathrm{ess}\inf\,} H(t,x,z,u),\quad dP \times dt\mbox{-a.s.}
\end{equation}
If $z\in\mathbb{Q}^{I\times I}$, the equality follows from the definitions \eqref{H-n} and \eqref{ess-inf-J}. Assume $z\notin\mathbb{Q}^{I\times I}$ and let $z_n\in\mathbb{Q}^{I\times I}$ such that $z_n\to z$. Further, let $\varphi(t,x_.)$ be a progressively measurable process such that $\varphi(t,x_.)\le H(t,x_.,z,u)$ for all $u\in\U$. Thus, for every $\e>0$ there exists $n_0\ge 0$ such 
$$
\begin{array}{ll}
\varphi(t,x)\le H(t,x,z_n,u)+\e,\quad n\ge n_0, \,\, u\in\U.
\end{array}
$$
Therefore, $\varphi(t,x_.)\le H^*(t,x_.,z_n)+\e,\,\,n\ge n_0$. Letting $n\to \infty$, we obtain
$\varphi(t,x_.)\le H^*(t,x_.,z)+\e$. Sending $\e$ to $0$, we finally get $\varphi(t,x_.)\le H^*(t,x_.,z)$, i.e.
$$
\underset{u\in \U}{\mathrm{ess}\inf\,} H(t,x,z,u)\le H^*(t,x,z),\quad dP \times dt\mbox{-a.s.}
$$
On the other hand, in view of \eqref{ess-inf-J} and the linearity of $H$ in $z$, we have $H^*(t,x,z)\le H(t,x,z,u),\,\, u\in\U$. Thus,
\begin{equation*}
H^*(t,x,z)\le \underset{u\in \U}{\mathrm{ess}\inf\,}H(t,x,z,u).
\end{equation*}
This finishes the proof of \eqref{H-a}. \qed

\subsection*{ Proof of \eqref{u-opt-2} in Proposition \eqref{ess-inf} }

Noting that for any $z \in \R^{I \times I}$ and $u\in \U$,  $H(t,x,z) \leq H(t,x,z,u)$, we have 
$$
H(t,x,\theta_t)\leq H(t,x,\theta_t,u), \,\quad u\in \U.
$$
Next, let $\Phi$ be an $\mathbb{L}$-measurable process such that
$\Phi(t,\omega)\leq H(t,x,\theta_t,u)$ for any $u\in \U$. Assume first that $\theta$ is uniformly bounded. Then there exists a sequence of $\mathbb{L}$-processes $(\theta^n)_{n\geq 0}$
such that for any $n\geq 0$, $\theta^n$ takes its values in $\mathbb{Q}^{I\times I}$, is piecewise constant and satisfies
 $\|\theta^n-\theta\|_\infty
:=\sup_{(t,\omega)}\|\theta^n_t(\omega)-\theta_t(\omega)
\|_g\rightarrow 0$ as $n\to\infty$.  
Furthermore,  in view of the conditions (B4) and (B8), we have
$$
|H(t,x,\theta_t,u)-H(t,x,\theta^n_t,u)|\leq C(1+\|x\|_t+\|P^u\|_2)\|\theta^n-
\theta\|_\infty.
$$
Now,  let $\epsilon >0$ and $n_0$ such that for any $n\geq n_0$,
$\|\theta^n-\theta\|_\infty\leq \epsilon$. Then, for $n\geq n_0$ and $u\in \U$ we have
$$
\Phi(t,\omega)\leq H(t,x,\theta_t^n,u)+\epsilon C(1+\|x\|_t+\|P^u\|_2),
$$
which implies that
$$
1_{B^k_n}\Phi(t,\omega)\leq
1_{B^k_n}\{H(t,x,z^k_n,u)+
\epsilon C(1+\|x\|_t+\|P^u\|_2)\},
$$
 where $B^k_n$ is a subset of $[0,T]\times \Omega$ in which $\theta_n$ is constant and equals to
$z^k_n\in \mathbb{Q}^{I\times I}$. Therefore
$$
\begin{array}{lll}
1_{B^k_n}\Phi(t,\omega)&\leq
1_{B^k_n}\{\inf_{u\in \mathcal{J}_n^k}H(t,x,z^k_n,u)+
\epsilon C(1+\|x\|_t+\|P^u\|_2)\}\\
{}& \leq 1_{B^k_n}\{\underset{u\in\U}{\mathrm{ess}\inf\,}H(t,x_.,z^k_n,u)+
\epsilon C(1+\|x\|_t+\|P^u\|_2)\}\\
{}& \leq 1_{B^k_n}\{H^*(t,x_.,z^k_n)+
\epsilon C(1+\|x\|_t+\|P^u\|_2)\}
\\
{}& \leq 1_{B^k_n}\{H^*(t,x_.,\theta^n_t)+
\epsilon C(1+\|x\|_t+\|P^u\|_2)\},
\end{array}
$$
where $\mathcal{J}_n^k$ is the countable subset of $\U$ defined in (\ref{J-n-discrete}
}) and
associated with $z^k_n$.  Summing over $k$, we obtain
\begin{equation}\label{limitssinf}\begin{array}{l}\Phi(t,\omega)\leq
H^*(t,x,\theta_t)+
2\epsilon C(1+\|x\|_t+\|P^u\|_2),
\end{array}\end{equation}
since $H^*$ is stochastic Lipschitz w.r.t. $z$ (see (\ref{H*-lip})).  Thus,
$$
|H^*(t,x,\theta_t)-H^*(t,x,\theta^n_t)|\leq \epsilon C(1+\|x\|_t +\sup_{u\in U}\|P^u\|_2)
$$ 
for $n\geq n_0$. Send now $\epsilon$ to 0 in (\ref{limitssinf}) to obtain that
$\Phi(t,\omega)\leq H^*(t,x,\theta_t)$ which means
$$
H(t,x,\theta_t)=\underset{u\in\U}{\mathrm{ess}\inf\,}H(t,x,\theta_t,u),
\,\,dP \times
dt \mbox{-a.e.}
$$
If $\theta$ is not bounded, we can find a sequence of bounded $\mathbb{L}$-processes
$(\bar \theta_n)_{n\ge 0}$ such that $\bar \theta_n\to \theta$ as $n\to \infty$, $\,\,dP \times
dt \mbox{-a.e.}$

Therefore, we have
\begin{equation}\label{reshthetan}
H^*(t,x,\bar \theta_n(t))=\underset{u\in\U}{\mathrm{ess}\inf\,}H(t,x,\bar \theta_n(t),u),
\,\,dP \times
dt \mbox{-a.e.}
\end{equation}
But, the stochastic Lipschitz property of $H^*$ and the linearity of $H$ w.r.t. $z$ imply that, as $n\to \infty$,
$$
H^*(t,x_.,\theta_n(t))\to  H^*(t,x_.,\theta(t)),\,\,
\underset{u\in\U}{\mathrm{ess}\inf\,}H(t,x,\bar \theta_n(t),u)\to
\underset{u\in\U}{\mathrm{ess}\inf\,}H(t,x,\bar \theta,u).
$$
We then obtain the desired result by taking the limit in (\ref{reshthetan}). \qed

\begin{bibdiv}
\begin{biblist}

\bib{Benes}{article}{
   author={Bene{\v{s}}, V. E.},
   title={Existence of optimal stochastic control laws},
   journal={SIAM J. Control},
   volume={9},
   date={1971},
   pages={446--472},
}
\bib{bremaud}{book}{
    author={Br\`emaud, Pierre},
   title={Point Processes and Queues: Martingale Dynamics},
  year={1981},
  publisher={Springer-Verlag, Berlin},

}	
\bib{Cohen2012}{article}{
   author={Cohen, S.N.},
   author={Elliott, R.J.},
   title={Existence, Uniqueness and Comparisons for BSDEs in General Spaces},
   journal={Annals of Probability},
   volume={40(5)},
   date={2012},
   pages={ 2264--2297},
}	
\bib{conf}{article}{
   author={Confortola, Fulvia},
   author={Fuhrman, Marco},
   author={Jacod, Jean},
   title={Backward stochastic differential equations driven by a marked point process: an elementary approach, with an application to optimal control},
   journal={Preprint, arXiv:1407.0876 [math.PR]},
   date={2014},
}   
\bib{DZ}{article}{
  title={Law of large numbers and central limit theorem for unbounded jump mean-field models},
  author={Dawson, Donald}
  author={Zheng, Xiaogu},
  journal={Advances in Applied Mathematics},
  volume={12},
  number={3},
  pages={293--326},
  year={1991},
  publisher={Elsevier}
}   	
\bib{DH}{article}{
   author={Djehiche, Boualem},
   author={Hamad\`ene, Said },
   title={Optimal control and zero-sum stochastic differential game problems of mean-field type},
   journal={ Preprint: arXiv:1603.06071v3 [math.PR]},
   date={2017},
 }
 \bib{DK}{article}{
 title={The rate function for some measure-valued jump processes},
  author={Djehiche, Boualem},
  author={ Kaj, Ingemar},
  journal={The Annals of Probability},
  pages={1414--1438},
  year={1995},
  publisher={JSTOR}
}
   \bib{DS}{article}{
   title={Large deviations for hierarchical systems of interacting jump processes},
  author={Djehiche, Boualem},
  author={Schied, Alexander}
  journal={Journal of Theoretical Probability},
  volume={11},
  number={1},
  pages={1--24},
  year={1998},
  publisher={Springer}
  }
\bib{Ekeland}{article}{
author={Ekeland, Ivar}, title={ On the variational principle},
journal={J. Math. Anal. Appl.},
   volume={47},
   date={1974},
   pages={324--353},

}
\bib{EH}{article}{
   author={El Karoui, Nicole},
   author={Hamad\`ene, Said},
   title={BSDEs and risk-sensitive control, zero-sum and nonzero-sum game problems of stochastic functional differential equations},
   journal={ Stochastic Processes and their Application},
   volume={107},
   date={2003},
   pages={145-169},
   
   }	
\bib{EPQ}{article}{
   author={El Karoui, Nicole},
   author={Peng, Shige},
   author={Quenez, Marie-Claire}
   title={Backward stochastic differential equations in finance},
   journal={ Mathematical Finance},
   volume={7(1)},
   date={1997},
   pages={1-71},
   }

\bib{Elliott}{article}{
   author={Elliott, R. J.},
   author={Kohlmann, M.},
   title={The variational principle and stochastic optimal control},
   journal={Stochastics},
   volume={3},
   date={1980},
   pages={229--241},

  }
\bib{EK}{book}{
    author={Ethier, Stewart N.},
    author={ Kurtz, Thomas G.},
   title={Markov processes: characterization and convergence.},
  volume={282},
  year={2009},
  publisher={John Wiley \& Sons},
}
\bib{Fe}{article}{
title={Large deviations for empirical process of mean-field interacting particle system with unbounded jumps},
  author={Feng, Shui},
  journal={The Annals of Probability},
  pages={2122--2151},
  year={1994},
  publisher={JSTOR}
}
\bib{FZ}{article}{
  title={Solutions of a class of nonlinear master equations},
  author={Feng, Shui},
  author={Zheng, Xiaogu},
  journal={Stochastic processes and their applications},
  volume={43},
  number={1},
  pages={65--84},
  year={1992},
  publisher={Elsevier}
}
\bib{Ham-Lepl95}{article}{
   author={Hamad{\`e}ne, S.},
   author={Lepeltier, J. P.},
   title={Backward equations, stochastic control and zero-sum stochastic
   differential games},
   journal={Stochastics Stochastics Rep.},
   volume={54},
   date={1995},
   number={3-4},
   pages={221--231},

}

\bib{sokol}{article}{
   author={Sokol, Alexander},
   author={Hansen, Niels Richard},
   title={Exponential martingales and changes of measure for counting processes},
   journal={ Stochastic analysis and applications},
   volume={33},
   date={2015},
   pages={823-843},
  }
  
\bib{J}{article}{
  title={Nonlinear SDEs driven by L$\backslash$'evy processes and related PDEs},
  author={Jourdain, Benjamin},
  author={M{\'e}l{\'e}ard, Sylvie},
  author={Woyczynski, Wojbor},
  journal={Alea},
  volume={4},
  pages={1-29},
  year={2008}
}
\bib{Leo1}{article}{
title={Some epidemic systems are long range interacting particle systems},
  author={L{\'e}onard, Christian},
  journal={ Stochastic processes in epidemic systems (eds. J.P. Gabriel et al.), Lecture Notes in Biomathematics},
   volume={86},
  year={1990},
  publisher={Springer}
  }
\bib{Leo2}{article}{
  title={Large deviations for long range interacting particle systems with jumps},
  author={L{\'e}onard, Christian},
  journal={Annales de l'IHP Probabilit{\'e}s et statistiques},
  volume={31},
  number={2},
  pages={289--323},
  year={1995}
  
}

\bib{NP}{book}{ 
 author={Nicolis, Gr{\'e}goire},
author={Prigogine, Ilya},
   title={Self organization in non-equilibrium systems},
  year={1977},
  publisher={Wiley-Interscience, New York},	
  }
  
\bib{Oel}{article}{
  title={A martingale approach to the law of large numbers for weakly interacting stochastic processes},
  author={Oelschl{\"a}ger, Karl},
  journal={The Annals of Probability},
  pages={458--479},
  year={1984},
  publisher={JSTOR}
}

\bib{pardoux}{article}{
   author={Pardoux, E.},
   author={Peng, S.},
   title={Adapted Solution of a Backward Stochastic Differential Equation},
   journal={ Systems and  Control Letters},
   volume={14},
   date={1990},
   pages={55-61},

}
\bib{RW}{book}{
    author={Rogers, L. Chris G.},
    author={Williams, David},
        title={Diffusions, Markov Processes and Martingales-Volume 2: It\^o Calculus.},
  year={2000},
  publisher={Cambridge University Press},
}	
	
\bib{Sch}{article}{
title={Chemical reaction models for non-equilibrium phase transitions},
  author={Schl{\"o}gl, Friedrich},
  journal={Zeitschrift f{\"u}r Physik},
  volume={253},
  number={2},
  pages={147--161},
  year={1972},
  publisher={Springer}
}
\bib{Szn}{article}{
title={Topics in propagation of chaos},
  author={Sznitman, Alain-Sol},
  booktitle={Ecole d'{\'e}t{\'e} de probabilit{\'e}s de Saint-Flour XIX—1989},
  pages={165--251},
  year={1991},
  publisher={Springer}
}

\end{biblist}
\end{bibdiv}

\end{document}